
\scrollmode
\documentclass[11pt,a4paper]{article}

\usepackage{amsmath}
\usepackage{amsthm}
\usepackage{amssymb}
\usepackage{srcltx}
\usepackage{hyperref}
\usepackage{tikz}

\newtheorem{lemma}{Lemma}[section]
\newtheorem{theorem}[lemma]{Theorem}
\newtheorem{corollary}[lemma]{Corollary}
\newtheorem{proposition}[lemma]{Proposition}

\theoremstyle{definition}

\newtheorem{definition}[lemma]{Definition}
\newtheorem{remark}[lemma]{Remark}

\numberwithin{equation}{section}


\newcommand{\cb}[1]{{\color{blue}#1}}
\newcommand{\leaveout}[1]{}

\makeatletter
\renewcommand{\p@enumii}{}
\makeatother

\newcommand\wtsmash[1]{{\smash{\widetilde{#1}}\rlap{$\phantom{#1}$}}}
\newcommand\whsmash[1]{{\smash{\widehat{#1}}\rlap{$\phantom{#1}$}}}
\newcommand{\overto}[1]{\mathbin{\stackrel{\raisebox{-2pt}{$\scriptstyle{#1}$}}{\to}}}
\newcommand{\overeq}[1]{\mathbin{\stackrel{\raisebox{-2pt}{$\scriptstyle{#1}$}}{\Longleftrightarrow}}}

\newcommand{\ud}{\,{\mathrm d}}

\newcommand{\Div}{{\rm div\,}}
\newcommand{\Grad}{{\rm grad\,}}

\newcommand\R{{\mathbb R}}

\newcommand\K{{\mathbb K}}

\newcommand\rplus{{\R_{+}}}

\newcommand\zero{\set{0}}

\newcommand{\Ascr}{\mathcal A}
\newcommand{\Bscr}{\mathcal B}
\newcommand{\Cscr}{\mathcal C}
\newcommand{\Dscr}{\mathcal D}

\newcommand{\Gscr}{\mathcal G}
\newcommand{\Hscr}{\mathcal H}
\newcommand{\Iscr}{\mathcal I}

\newcommand{\Kscr}{\mathcal K}
\newcommand{\Lscr}{\mathcal L}

\newcommand{\Rscr}{\mathcal R}

\newcommand{\Tscr}{\mathcal T}
\newcommand{\Uscr}{\mathcal U}
\newcommand{\Xscr}{\mathcal X}
\newcommand{\Yscr}{\mathcal Y}
\newcommand{\Zscr}{\mathcal Z}

\newcommand{\Wscr}{\mathcal W}



\newcommand{\dom}[1]{\mathrm{dom}\left(#1\right)}
\newcommand{\range}[1]{\mathrm{ran}\left(#1\right)}

\newcommand{\Ker}[1]{\ker\left(#1\right)}

\newcommand{\re}[0]{\mathrm{Re}\,}

\newcommand{\Ipdp}[2]{\left\langle #1 , #2 \right\rangle}

\newcommand{\set}[1]{\left\lbrace #1 \right\rbrace}

\newcommand{\bigmid}{\bigm\vert}
\newcommand{\Bigmid}{\Bigm\vert}
\newcommand{\biggmid}{\biggm\vert}

\newcommand{\bi}{\begin{itemize}}
\newcommand{\ei}{\end{itemize}}
\newcommand{\be}{\begin{enumerate}}
\newcommand{\ee}{\end{enumerate}}

\newcommand{\sbm}[1]{\left[\begin{smallmatrix}#1\end{smallmatrix}\right]}

\newcommand{\bbm}[1]{\begin{bmatrix}#1\end{bmatrix}}

\makeatletter

\def\etv{& \hskip-.3em\vrule\hskip-.3em &} 
\def\smalletv{&\vrule&} 
\def\smallcrh{\vrule height0pt depth2\ex@ width0pt
\cr\noalign{\hrule}
\vrule height6.5\ex@ depth0pt width0pt}
\newbox\smallstrutbox
\setbox\smallstrutbox=\hbox{\vrule height6pt depth1.5pt width\z@}
\def\smallstrut{\relax\ifmmode\copy\smallstrutbox\else\unhcopy\smallstrutbox\fi}

\newenvironment{sysmatrix}{
\let\|=\etv
\hskip \arraycolsep
\begin{matrix}}
{\end{matrix}
\hskip \arraycolsep
}       

\newenvironment{smallsysmatrix}{\null\,\vcenter\bgroup
\let\|=\smalletv

\def\\{\smallstrut\math@cr}
\restore@math@cr\default@tag
\baselineskip\z@skip \lineskip\z@skip \lineskiplimit\lineskip
\ialign\bgroup\hfil$\m@th\scriptstyle##$\hfil&&\thickspace\hfil
$\m@th\scriptstyle##$\hfil\crcr
\crcr\noalign{\vskip -.3\ex@}%
}{\crcr\noalign{\vskip -.2\ex@}%
\crcr\egroup\egroup\,%
}

\makeatother


\begin{document}

\title{Linear wave systems on $n$-D spatial domains}

\author{Mikael Kurula\footnote{Corresponding author, \emph{mkurula@abo.fi}. Financial support by Ruth och Nils-Erik Stenb\"acks stiftelse and the University of Twente most gratefully acknowledged.}~ and Hans Zwart}

\thispagestyle{empty}

\maketitle

\begin{abstract}
In this paper we study the linear wave equation on an $n$-dimensional spatial domain. We show that there is a boundary triplet associated to the undamped wave equation. This enables us to characterise all boundary conditions for which the undamped wave equation possesses a unique solution non-increasing in the energy. Furthermore, we add boundary inputs and outputs to the system, thus turning it into an impedance conservative boundary control system. 
\end{abstract}

\noindent{\bf Keywords}: Wave equation, boundary triplet, boundary control

\section{Introduction}

In this paper we study the following linear system associated to the wave equation:
\begin{equation}\label{eq:physPDE}
  \left\{
    \begin{aligned}
        \rho(\xi)\frac{\partial^2 z}{\partial t^2} (\xi,t) &= 
	  \Div\big(T(\xi)\,\Grad z(\xi,t)\big)-\left(Q_i\frac{\partial z}{\partial t}\right)(\xi,t),
	  \quad \xi\in\Omega,~t\geq0,\\
    0 &= \frac{\partial z}{\partial t}(\xi,t)\quad\text{on}~\Gamma_0\times\rplus,\\
    0 &= \nu\cdot\big(T(\xi)\,\Grad z(\xi,t)\big)+\left(Q_b\frac{\partial z}{\partial t}\right)(\xi,t)\quad\text{on}~\Gamma_1\times\rplus,\\
    u(\xi,t) &= \nu\cdot\big(T(\xi)\,\Grad z(\xi,t)\big)\quad\text{on}~\Gamma_2\times\rplus,\\
    y(\xi,t) &= \frac{\partial z}{\partial t}(\xi,t)\quad\text{on}~\Gamma_2\times\rplus, \\
    z(\xi,0) &= z_0(\xi),\quad \frac{\partial z}{\partial t} (\xi,0) = w_0(\xi)\quad \text{on}~\Omega;
    \end{aligned}\right.
\end{equation}
here $\Omega \subset {\mathbb R}^n$ is a bounded
  spatial domain with Lipschitz-continuous boundary
  $\partial\Omega=\overline{\Gamma_0}\cup \overline{\Gamma_1}\cup
  \overline{\Gamma_2}$, with $\Gamma_k\cap \Gamma_\ell = \emptyset$ for $k\neq\ell$. The vector $\nu$ denotes the outward
  normal at the boundary. Furthermore, $z(\xi,t)$ is the deflection from the
  equilibrium position at point $\xi\in\Omega$ and time $t\geq0$, $u$ (the forces on
  $\Gamma_2$) is the \emph{input}, and $y$ (the velocities at
  $\Gamma_2$) is the \emph{output}.  The physical parameters,
  $\rho(\cdot)$  and $T(\cdot)$ denote the mass density and Young's
  elasticity modulus, respectively. The operators $Q_i$ and $Q_b$
  correspond to damping inside the domain $\Omega$ and at a part of its
  boundary, respectively. Typically $Q_i$ and $Q_b$ are point-wise
  multiplication operators, but they need not be.

  Note that we do not assume that the sets
  $\Gamma_k$ are separated, i.e., that $\overline{\Gamma_k}\cap
  \overline{\Gamma_\ell} =\emptyset$, $k \neq \ell$. However, we assume that the
  $\Gamma_k$'s are disjoint open subsets in the relative topology
  of the boundary, and that the boundaries $\partial\Gamma_k$ of the
  $\Gamma_k$'s have surface measure zero.

The wave system is a standard system in control of partial differential equations which has been widely studied before in the literature; see for instance \cite[Section 7.3]{PazyBook}, \cite[Section 11.3.2]{ReRo93}, or \cite[Section XIV.3]{Yosi80} for the zero-input case $u=0$. Among the more recent papers which are closer to our treatment are \cite{AaLuMa13,MaSt06,MaSt07}. Compared to these, we allow a more general spatial domain, a more general boundary damping operator $Q_b$, and spatially varying physical parameters $\rho$ and $T$.

A main difference between our treatment of the wave equation and those cited above is the first-order representation used in this study. We consider the semigroup generator $\sbm{0&\mathrm{div}\\\mathrm{grad}&0}$ rather than the standard $\sbm{0&I\\\Delta&0}$. This makes it possible to associate a boundary triplet to the wave equation (Section \ref{sec:waveeq}) and it turns out that also obtaining previously known results becomes technically simpler with this choice. Using the results obtained for the homogeneous case, we show in Section \ref{sec:cons} that the inhomogeneous system presented above is an impedance passive boundary control system.

The general boundary triplet techniques that we develop generalise e.g.\ \cite[Thm 7.2.4]{JaZwBook} to $n$-dimensional spatial domains, and they are certainly of independent interest as boundary triplets are still being actively used in the study of PDEs; see e.g.\ \cite{GoGoBook,DHMS09,Arl12} and the references therein.

In our analysis of the wave equation, we recover the
  well-known result that the adjoint of the gradient operator, considered as an unbounded operator from $L^2(\Omega)$ into $L^2(\Omega)^n$, is minus the divergence operator, considered as an unbounded operator from $L^2(\Omega)^n$ into $L^2(\Omega)$. Other work making extensive use of the duality between the divergence and the gradient in the analysis of PDEs is \cite{Tro13,Tro14}; this work suggests that there is potential for extending the approach to certain types of non-linearities at the boundary.

We end the introduction with a summary of the structure of the paper. Section \ref{sec:genBT} presents results for characterizing boundary conditions that induce contraction semigroups, assuming the existence of a boundary triplet. In Section \ref{sec:waveeq}, we associate a boundary triplet to the wave equation and show how the results of Section \ref{sec:genBT} can be applied in this case. Section \ref{sec:cons} concerns the interpretation of the wave system as a conservative boundary control system in different ways: with different choices of input/output spaces, and passivity is considered in both the impedance and scattering sense. The paper also contains two appendices, one with Sobolev-space background and one with two general operator-theoretical results. To our knowledge, Theorem \ref{thm:dirichletrange2b} is new.

\section{General results for boundary triplets}\label{sec:genBT}

We begin by adapting the definition \cite[p.\ 155]{GoGoBook} of a boundary triplet for a symmetric operator to the case of a skew-symmetric operator; see also \cite[\S5]{MaSt07}.
\begin{definition}\label{def:btrip}
Let $A_0$ be a densely defined, skew-symmetric, and closed linear operator on a Hilbert space $X$. By a \emph{boundary triplet} for $A_0^*$ we mean a triple $(\Bscr;B_1,B_2)$ consisting of a Hilbert space $\Bscr$ and two bounded linear operators $B_1,B_2:\dom{A_0^*}\to \Bscr$, such that $\sbm{B_1\\B_2}\dom{A_0^*}=\sbm{\Bscr\\\Bscr}$ and for all $x,\wtsmash x\in\dom{A_0^*}$ there holds
\begin{equation}\label{eq:bvsA0star}
  \Ipdp{A_0^*x}{\wtsmash x}_X+\Ipdp{x}{A_0^*\wtsmash x}_X=\Ipdp{B_1 x}{B_2 \wtsmash x}_\Bscr+\Ipdp{B_2 x}{B_1 \wtsmash x}_\Bscr.
\end{equation}
\end{definition}

Indeed, the analogue of \eqref{eq:bvsA0star} is written as follows in \cite[p.\ 155]{GoGoBook}:
$$
  \Ipdp{\Ascr^*x}{\wtsmash x}-\Ipdp{x}{\Ascr^*\wtsmash x}=\Ipdp{\Gamma_1 x}{\Gamma_2 \wtsmash x} - \Ipdp{\Gamma_2 x}{\Gamma_1 \wtsmash x},
$$
and setting $A_0^*=(i\Ascr)^*$, $B_1=\Gamma_1$, and $B_2=i\Gamma_2$ in \eqref{eq:bvsA0star}, we obtain exactly this. From the definition of boundary triplet it immediately follows that the so-called \emph{minimal operator} $A_0$ can be recovered via $A_0=-A_0^*|_{\Ker{B_1}\cap\Ker{B_2}}$; see \cite[p.\ 155]{GoGoBook}.

Let $X$ be a Hilbert space and let $R$ be a (linear) relation in $X$, i.e., a subspace of $X^2$. Then $R$ is called \emph{dissipative} if $\re \Ipdp{r_1}{r_2}_X\leq0$ for all $\sbm{r_1\\r_2}\in R$, and $R$ is \emph{maximal dissipative} if $R$ has no proper extension to a dissipative relation in $X$. The relation $R$ is called \emph{skew-symmetric} if $\re\Ipdp{r_1}{r_2}=0$ for all $\sbm{r_1\\r_2}\in R$, and it is \emph{(maximal) accretive} if $\sbm{I&0\\0&-I}R$ is (maximal) dissipative. An \emph{operator} $A:X\supset \dom A\to X$ is called \emph{(maximal) dissipative}, \emph{(maximal) accretive}, or \emph{skew-symmetric} if its graph $\Gscr(A):=\sbm{I\\A}\dom A$, seen as a relation in $X$, has the corresponding property.
\begin{theorem}\label{thm:divgradprops}
Let $(\Bscr;B_1,B_2)$ be a boundary triplet for $A_0^*$ and consider the restriction $A$ of $A_0^*$ to a subspace $\Dscr$ containing $\Ker{B_1}\cap\Ker{B_2}$. Define a subspace of $\Bscr^2$ by $\Cscr:=\sbm{B_1\\B_2} \Dscr$. Then the following claims are true:
\begin{enumerate}
\item The domain of $A$ can be written
\begin{equation}\label{eq:DomAtriv}
  \dom A=\Dscr=\set{d\in\dom{A_0^*}\bigmid \bbm{B_1 d\\B_2 d} \in \Cscr}.
\end{equation}
\item The operator closure of $A$ is $A_0^*$ restricted to
$$
  \wtsmash\Dscr=\set{d\in\dom{A_0^*}\bigmid \bbm{B_1 d\\B_2 d} \in \overline\Cscr},
$$ 
where $\overline\Cscr$ is the closure of $\Cscr$ in $\Bscr^2$. Actually, $\wtsmash{\Dscr}$ is the closure of $\Dscr$ in $\dom{A_0^*}$, where $\dom{A_0^*}$ is endowed with the graph norm. Furthermore, $A$ is closed if and only if $\Cscr$ is closed. 
\item The adjoint $A^*$ is the restriction of $-A_0^*$ to $\Dscr'$, where 
$$
  \Dscr'=\set{ d'\in\dom{A_0^*}\bigmid \bbm{B_1d'\\B_2 d'} \in \bbm{0&I\\I&0}\Cscr^{\perp}}.
$$
\item The operator $A$ is (maximal) dissipative if and only if $\Cscr$ is a (maximal) dissipative relation in $\Bscr$. Moreover, $A$ is maximal dissipative if and only if there exists a contraction $V$ on $\Bscr$ such that $\Cscr=\Ker{\bbm{I+V&I-V}}$.
\item The operator $A$ is skew-adjoint if and only if $\Cscr=\sbm{0&1\\1&0}\Cscr^{\perp}$. This holds if and only if $\Cscr=\Ker{\bbm{I+V&I-V}}$ for some unitary operator $V$ on $\Bscr$.
\end{enumerate}

It also holds that $A$ is (maximal) accretive if and only if $\Cscr$ is (maximal) accretive. Consequently, $A$ is skew-symmetric if and only if $\Cscr$ is skew-symmetric.
\end{theorem}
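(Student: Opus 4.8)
The plan is to derive the accretivity equivalence from item~4 by a single sign flip, and then to read off the skew-symmetry equivalence as an immediate corollary. I would start by recording the elementary bookkeeping: the map $R\mapsto\sbm{I&0\\0&-I}R$ is an inclusion-preserving involution on the linear relations in $X$ which, by definition, carries the (maximal) accretive relations onto the (maximal) dissipative ones; and on graphs of operators one has $\sbm{I&0\\0&-I}\Gscr(A)=\Gscr(-A)$. Consequently $A$ is (maximal) accretive if and only if $-A$ is (maximal) dissipative, and likewise $\Cscr$ is (maximal) accretive if and only if $\sbm{I&0\\0&-I}\Cscr$ is (maximal) dissipative.

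The key step is then to observe that $(\Bscr;B_1,-B_2)$ is a boundary triplet for $(-A_0)^*=-A_0^*$. Indeed $-A_0$ is again densely defined, closed and skew-symmetric with $(-A_0)^*=-A_0^*$; the range condition $\sbm{B_1\\-B_2}\dom{A_0^*}=\sbm{\Bscr\\\Bscr}$ is the same as for $\sbm{B_1\\B_2}$; and multiplying \eqref{eq:bvsA0star} by $-1$ gives, for all $x,\wtsmash x\in\dom{A_0^*}$,
\[
  \Ipdp{(-A_0^*)x}{\wtsmash x}_X+\Ipdp{x}{(-A_0^*)\wtsmash x}_X
   =\Ipdp{B_1 x}{-B_2\wtsmash x}_\Bscr+\Ipdp{-B_2 x}{B_1\wtsmash x}_\Bscr,
\]
which is exactly condition \eqref{eq:bvsA0star} for the triple $(\Bscr;B_1,-B_2)$. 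Since $\Ker{B_1}\cap\Ker{-B_2}=\Ker{B_1}\cap\Ker{B_2}$, the operator $-A$ is the restriction of $(-A_0)^*$ to the subspace $\Dscr$, which contains the minimal operator $\Ker{B_1}\cap\Ker{-B_2}$ of this new triplet; the associated boundary relation is $\sbm{B_1\\-B_2}\Dscr=\sbm{I&0\\0&-I}\Cscr$.

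I would then invoke item~4 of the theorem in this setting: $-A$ is (maximal) dissipative if and only if $\sbm{I&0\\0&-I}\Cscr$ is a (maximal) dissipative relation in $\Bscr$, i.e.\ if and only if $\Cscr$ is (maximal) accretive. Chaining this with the first paragraph yields that $A$ is (maximal) accretive if and only if $\Cscr$ is (maximal) accretive.

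For the concluding sentence, I would use that an operator (respectively a relation) is skew-symmetric precisely when it is simultaneously dissipative and accretive, since $\re\Ipdp{\cdot}{\cdot}\le0$ together with $\re\Ipdp{\cdot}{\cdot}\ge0$ forces $\re\Ipdp{\cdot}{\cdot}=0$. Combining item~4 for the dissipative half with the accretivity equivalence just established then gives that $A$ is skew-symmetric if and only if $\Cscr$ is skew-symmetric. The only point that needs care is the sign bookkeeping in verifying that $(\Bscr;B_1,-B_2)$ is genuinely a boundary triplet for $-A_0^*$ in the sense of Definition~\ref{def:btrip}; beyond that I do not expect any real obstacle.
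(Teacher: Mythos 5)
Your argument for the final sentence of the theorem is correct, and it actually supplies something the paper leaves out: the paper's proof stops after item~5 and gives no justification at all for the accretivity and skew-symmetry equivalences. Your key observation checks out in every detail: negating both $A_0^*$ and $B_2$ preserves the identity \eqref{eq:bvsA0star}, the joint surjectivity of $\sbm{B_1\\-B_2}$ is inherited from that of $\sbm{B_1\\B_2}$, and $\Ker{B_1}\cap\Ker{-B_2}=\Ker{B_1}\cap\Ker{B_2}$, so $(\Bscr;B_1,-B_2)$ is indeed a boundary triplet for $(-A_0)^*=-A_0^*$ and the relation attached to $-A=(-A_0^*)|_{\Dscr}$ is $\sbm{I&0\\0&-I}\Cscr$. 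With the paper's definitions the chain ``$A$ (maximal) accretive $\iff$ $-A$ (maximal) dissipative $\iff$ $\sbm{I&0\\0&-I}\Cscr$ (maximal) dissipative $\iff$ $\Cscr$ (maximal) accretive'' is watertight, including the maximal versions, since $R\mapsto\sbm{I&0\\0&-I}R$ is an inclusion-preserving involution; and reading skew-symmetry as ``dissipative and accretive'' closes the last claim.

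The genuine gap is one of coverage. The statement consists of items 1--5 plus the final sentence, and your proposal establishes only the final sentence while explicitly invoking item~4 as known. Items 1--5 are the substance of the theorem and require real work in the paper: item~1 uses the hypothesis $\Ker{B_1}\cap\Ker{B_2}\subset\Dscr$ to show $\Dscr$ is the full preimage of $\Cscr$ under $\sbm{B_1\\B_2}$; item~2 rests on Lemmas \ref{lem:closedchar} and \ref{lem:invimages}, which transport closures and closedness back and forth along the continuous surjection $\sbm{B_1\\B_2}$ precisely because its kernel is contained in $\Dscr$; item~3 combines $-A_0\subset A\subset A_0^*$ with the abstract Green identity \eqref{eq:bvsA0star}; and items~4 and 5 are imported from \cite[Thm 3.1.6]{GoGoBook}. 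None of this appears in your proposal, so as written it proves a corollary of the theorem rather than the theorem itself. If you add proofs of items 1--5 (or are permitted to cite them), your sign-flip argument is a clean and economical way to finish.
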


In Theorem \ref{thm:divgradprops}, we use the operator $A$ to define a relation
$\Cscr$, but we can also go the other way around: If we start with an
arbitrary $\Cscr\subset\Bscr^2$ and define $A$ as the restriction of
$A_0^*$ to $\dom A$ by the right hand-side in \eqref{eq:DomAtriv}, then by the surjectivity of $\sbm{B_1\\B_2}$, we have $\Cscr=\sbm{B_1\\B_2}\dom{A}$, and hence all statements in the theorem remain true. Similarly, it follows from part (3) that $\sbm{0&I\\I&0}\Cscr^\perp=\sbm{B_1\\B_2}\dom{A^*}$. It is thus shown how to obtain $\Cscr$ from $\dom A$ and vice versa; part (4) also contains a formula that expresses $\Cscr$ in terms of $V$. Conversely, we can recover $V$ from $\Cscr$ as the mapping
$$
  V:e-f\mapsto e+f,\quad \bbm{f\\e}\in\Cscr,\quad \dom V=\bbm{-I&I}\Cscr.
$$
Indeed, if $\Cscr$ is a maximal dissipative relation in $\Bscr$, then
$V$ defined by this formula is a contraction on $\Bscr$; see also Lemma \ref{L:2.4} below.

\begin{proof}
{\em 1}.\/ Denote the set on the right-hand side of \eqref{eq:DomAtriv} by $\whsmash\Dscr$. Then by the definition of $\Cscr$:
$$
  d\in\Dscr\quad\implies\quad \bbm{B_1d\\B_2d}\in\Cscr\quad\implies\quad d\in\whsmash\Dscr.
$$
Conversely by the definitions of $\whsmash\Dscr$ and $\Cscr$, respectively,
$$
\begin{aligned}
  d\in\whsmash\Dscr\quad&\implies\quad\bbm{B_1d\\B_2d}\in\Cscr\quad\implies\quad \exists d'\in\Dscr:~\bbm{B_1d\\B_2d}=\bbm{B_1d'\\B_2d'} \\
  &\implies \quad \exists d'\in\Dscr:~d-d'\in\Ker{\bbm{B_1d\\B_2d}}\subset\Dscr,
\end{aligned}
$$
and for such a $d'$ we have $d=d-d'+d'\in\Dscr$. Thus $\Dscr=\whsmash\Dscr$.

\smallskip\noindent
{\em 2}.\/ It follows from Lemma \ref{lem:closedchar} that
$\dom{\overline A}=\overline{\dom A}=\overline\Dscr$; hence $A$ is a
closed operator if and only if $\Dscr$ is a closed subspace of
$\dom{A_0^*}$. Moreover, by \eqref{eq:DomAtriv} and statement (2) of
Lemma \ref{lem:invimages}, we have that $\overline\Dscr=\wtsmash\Dscr$
and that $\Dscr$ is closed if and only if $\Cscr$ is closed.

\smallskip\noindent
{\em 3}.\/ From $A\subset A_0^*$ and the definition of the minimal operator, we get $-A_0\subset A$ which in turn implies that $A^*\subset -A_0^*$. Then it follows from \eqref{eq:bvsA0star} that $d'\in\dom{A^*}$ if and only if $\sbm{B_2 d'\\B_1 d'}\perp\sbm{B_1d\\B_2 d}$ for all $d\in\Dscr$, and this proves assertion 3.

\smallskip\noindent
{\em 4}.\/ Both claims follow from \cite[Thm 3.1.6]{GoGoBook} and its proof.

\smallskip\noindent
{\em 5}.\/ Since $-A^*,A\subset A_0^*$, it holds that $A^*=-A$ if and only if $\dom{A^*}=\dom A$. By item 3 and \eqref{eq:DomAtriv}, $\dom{A^*}=\dom A$ if $\Cscr=\sbm{0&I\\I&0}\Cscr^{\perp}$. Conversely, if $\dom{A^*}=\dom A$, then by the above formulas connecting $\dom A$, $\Cscr$, and $\dom{A^*}$:
$$
  \Cscr=\bbm{B_1\\B_2}\dom{A}=\bbm{B_1\\B_2}\dom{A^*}=\bbm{0&I\\I&0}\Cscr^\perp.
$$
The other assertion is contained in \cite[Thm 3.1.6]{GoGoBook}.
\end{proof}

Motivated by item (4) of Theorem \ref{thm:divgradprops}, we now specialise Theorem \ref{thm:divgradprops} to the case where $\Cscr$ is the kernel of some $W_B\in\Lscr(\Bscr^2;\Kscr)$, i.e., $W_B$ is a bounded and everywhere-defined linear operator from $\Bscr^2$ into $\Kscr$.
\begin{theorem}\label{thm:maxdissprel}
Let $(\Bscr;B_1,B_2)$ be a boundary triplet for the operator $A_0^*$ on a Hilbert space $X$, let $\Kscr$ be a Hilbert space, and let $W_B=\bbm{W_1&W_2}\in \Lscr(\Bscr^2;\Kscr)$. The following claims are true for the restriction $A:=A_0^*\big|_{\dom A}$ to $\dom A=\Ker{\bbm{W_1&W_2}\sbm{B_1\\B_2}}$:
\begin{enumerate}
\item The operator $A$ is closed.
\item The operator $A$ is (maximal) dissipative if and only if $\Ker{W_B}$ is a (maximal) dissipative relation in $\Bscr$.
\item The adjoint of $A$ is $A^*=-A_0^*\big|_{\dom{A^*}}$, where
\begin{equation}\label{eq:AadjDom}
  \dom{A^*}=\set{ x\in\dom{A_0^*} \bigmid \bbm{B_1x\\B_2 x}\in \overline{\range{\bbm{ W_2^*\\W_1^*}}}}.
\end{equation}
\item The adjoint $A^*$ is dissipative if and only if
\begin{equation}\label{eq:sigmanevandiss}
  W_1 W_2^*+W_2 W_1^*\geq0
\end{equation}
in $\Kscr$. The adjoint is skew-symmetric, i.e., $\re\Ipdp{A^*x}{x}=0$ for all $x\in\dom{A^*}$, if and only if \eqref{eq:sigmanevandiss} holds with equality.
\item The operator $A$ generates a contraction semigroup on $X$ if and only if $A$ is dissipative and \eqref{eq:sigmanevandiss} holds.
\item The operator $A$ generates a unitary group on $X$ if and only if $A$ is skew-symmetric and \eqref{eq:sigmanevandiss} holds with equality.
\end{enumerate}
\end{theorem}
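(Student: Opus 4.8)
The plan is to deduce Theorem \ref{thm:maxdissprel} as a specialization of Theorem \ref{thm:divgradprops} by taking $\Cscr=\Ker{W_B}$ as a subspace of $\Bscr^2$, and then to invoke the standard semigroup-generation theory (Lumer--Phillips and Stone) for the last two items. First I would observe that $\dom A=\Ker{\bbm{W_1&W_2}\sbm{B_1\\B_2}}$ is precisely the set of $d\in\dom{A_0^*}$ with $\sbm{B_1d\\B_2d}\in\Ker{W_B}$, so that with $\Cscr:=\Ker{W_B}=\sbm{B_1\\B_2}\dom A$ (using surjectivity of $\sbm{B_1\\B_2}$, as noted after Theorem \ref{thm:divgradprops}) the operator $A$ here is exactly the operator $A$ of Theorem \ref{thm:divgradprops}. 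Item (1) is then immediate: $\Ker{W_B}$ is closed because $W_B$ is bounded and everywhere defined, so by part (2) of Theorem \ref{thm:divgradprops} the operator $A$ is closed. Item (2) is a direct restatement of part (4) of Theorem \ref{thm:divgradprops} with $\Cscr=\Ker{W_B}$.

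For item (3) I would start from part (3) of Theorem \ref{thm:divgradprops}, which gives $A^*=-A_0^*$ restricted to those $d'$ with $\sbm{B_1 d'\\B_2 d'}\in\sbm{0&I\\I&0}\Cscr^\perp$, and then just compute $\Cscr^\perp=(\Ker{W_B})^\perp=\overline{\range{W_B^*}}=\overline{\range{\sbm{W_1^*\\W_2^*}}}$; applying the flip $\sbm{0&I\\I&0}$ turns $\sbm{W_1^*\\W_2^*}$ into $\sbm{W_2^*\\W_1^*}$, which is exactly \eqref{eq:AadjDom}. For item (4), dissipativity of $A^*=-A_0^*|_{\dom{A^*}}$ means $\re\Ipdp{-A_0^*x}{x}\le0$, i.e. $\re\Ipdp{A_0^*x}{x}\ge0$, for all $x\in\dom{A^*}$; using \eqref{eq:bvsA0star} with $x=\wtsmash x$ this is $\re\Ipdp{B_1x}{B_2x}\ge0$. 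Now $\sbm{B_1x\\B_2x}$ ranges over (a dense subset of) $\overline{\range{\sbm{W_2^*\\W_1^*}}}$, so writing $\sbm{B_1x\\B_2x}=\sbm{W_2^*k\\W_1^*k}$ the condition becomes $\re\Ipdp{W_2^*k}{W_1^*k}_\Bscr=\re\Ipdp{k}{W_1W_2^*k}_\Kscr\ge0$ for all $k$, which by polarization/self-adjointness of the real part is equivalent to $W_1W_2^*+W_2W_1^*\ge0$; here I need the density of the $W_j^*k$ in the closure together with continuity of the inner product to pass between $\dom{A^*}$ and all of $\Kscr$. The skew-symmetric case is the same argument with $\ge$ replaced by $=$.

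For items (5) and (6) I would combine the above with classical results. By the Lumer--Phillips theorem, $A$ generates a contraction semigroup iff $A$ is dissipative and maximal (equivalently $\range{I-A}=X$, equivalently, since $A$ is closed and densely defined, $A^*$ is dissipative). Item (4) identifies dissipativity of $A^*$ with \eqref{eq:sigmanevandiss}, so $A$ generates a contraction semigroup iff $A$ is dissipative and \eqref{eq:sigmanevandiss} holds, which is item (5). For item (6), by Stone's theorem $A$ generates a unitary group iff $A$ is skew-adjoint, i.e. $A^*=-A$; equivalently $A$ is skew-symmetric and $A^*$ is skew-symmetric (so that both $\pm A$ are maximal dissipative, again via Lumer--Phillips), and by item (4) the skew-symmetry of $A^*$ is exactly \eqref{eq:sigmanevandiss} with equality. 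One should double-check that $A$ is densely defined so that $A^*$ is well defined and the Lumer--Phillips hypotheses apply; this follows because $\dom A\supseteq\Ker{B_1}\cap\Ker{B_2}=\dom{A_0}$, which is dense in $X$ by assumption on $A_0$.

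The main obstacle I anticipate is item (4): one must carefully justify that testing the sign of $\re\Ipdp{B_1x}{B_2x}$ over $x\in\dom{A^*}$ really is equivalent to the operator inequality $W_1W_2^*+W_2W_1^*\ge0$ on all of $\Kscr$. The subtle points are (i) that $\sbm{B_1x\\B_2x}$ for $x\in\dom{A^*}$ sweeps out a \emph{dense} subspace of $\overline{\range{\sbm{W_2^*\\W_1^*}}}$ rather than the whole range or its closure --- this uses surjectivity of $\sbm{B_1\\B_2}:\dom{A_0^*}\to\Bscr^2$ onto $\range{\sbm{W_2^*\\W_1^*}}$ and then a density/continuity argument to reach the closure --- and (ii) that the quadratic form $k\mapsto\re\Ipdp{k}{(W_1W_2^*+W_2W_1^*)k}$ is continuous, so its nonnegativity on a dense set extends to all of $\Kscr$. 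Everything else is bookkeeping on top of Theorem \ref{thm:divgradprops} and the cited generation theorems.
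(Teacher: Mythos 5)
Your proposal is correct and follows essentially the same route as the paper: it specialises Theorem \ref{thm:divgradprops} with $\Cscr=\Ker{W_B}$, identifies $\sbm{0&I\\I&0}\Cscr^\perp$ with $\overline{\range{\sbm{W_2^*\\W_1^*}}}$, and uses the identity $2\re\Ipdp{W_2^*k}{W_1^*k}_\Bscr=\Ipdp{(W_1W_2^*+W_2W_1^*)k}{k}_\Kscr$ together with Lumer--Phillips and Stone, exactly as the paper does. The density/continuity subtlety you flag in item (4) is resolved in the paper by the same continuity-of-the-inner-product argument you describe, so there is no gap.
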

\begin{proof}
The subspace $\Dscr$ of Theorem \ref{thm:divgradprops} is
$$
  \Dscr=\dom A=\Ker{\bbm{W_1&W_2}\sbm{B_1\\B_2}}\supset\Ker{B_1}\cap\Ker{B_2}.
$$
By \eqref{eq:DomAtriv} and the surjectivity of $\sbm{B_1\\B_2}$, it is easy to see that $\Cscr=\Ker{W_B}$.

\smallskip\noindent
{\em 1}.\/ Since $W_B\in\Lscr(\Bscr^2;\Kscr)$, $\Cscr=\Ker{W_B}$ is closed. Now the closedness of $\dom A$ follows from part (2) of Theorem \ref{thm:divgradprops}.

\smallskip\noindent
{\em 2}.\/
This follows from $\Cscr=\Ker{W_B}$ and part (4) of Theorem \ref{thm:divgradprops}.

\smallskip\noindent
{\em 3}.\/
The domain and action of $A^*$ follow directly from part (3) of Theorem \ref{thm:divgradprops}; note that 
\begin{equation}\label{eq:Cperp}
  \bbm{0&I\\I&0}\Cscr^\perp=\overline{\range{\bbm{W_2^*\\ W_1^*}}}.
\end{equation}
\noindent
{\em 4}.\/
Applying Theorem \ref{thm:divgradprops} to $A^*$, using \eqref{eq:Cperp}, we obtain that $A^*$ is dissipative if and only if $\sbm{0&I\\I&0}\Cscr^\perp$ is accretive; note the minus sign in the formula for $A^*$ in item 3. By the continuity of the inner product this holds if and only if $\range{\sbm{W_2^*\\W_1^*}}$ is accretive, but this is true if and only if \eqref{eq:sigmanevandiss} holds, since
$$
  2\re\Ipdp{W_2^*f}{W_1^*f}_\Bscr=\Ipdp{(W_1 W_2^*+W_2 W_1^*)f}{f}_\Kscr,\quad f\in\Kscr.
$$
A trivial modification of the above gives the proof for the skew-symmetric case.

\smallskip\noindent
{\em 5}.\/ Since $A$ is closed by the first item, this follows from the Lumer-Phillips Theorem.
\smallskip

\noindent
{\em 6}.\/ Since $A$ is closed, both $A$ and $A^*$ are skew-symmetric if and only if and only if $A^*=-A$. The claim follows from Stone's theorem. 
\end{proof}

We next introduce a maximality condition, which implies that $A$ is
dissipative if and only if $A^*$ is dissipative. Theorem \ref{thm:JaZw}
is a general boundary triplet analogue of \cite[Thm 7.2.4]{JaZwBook}. This
theorem can be applied to some PDEs on $n$-dimensional spatial
domains to show existence of solutions; see also \cite[\S4.1]{GZM05}. First, however, we need the following lemma:
\begin{lemma}\label{L:2.4}
Let ${\mathcal B}$ and $\Kscr$ be Hilbert spaces, and let
$\bbm{W_1&W_2}\in \Lscr(\Bscr^2;\Kscr)$. Assume that $W_1+W_2$ is
injective, and that 
\begin{equation}\label{eq:rangecond}
  \range{W_1-W_2}\subset\range{W_1+W_2}.
\end{equation}
Then there exists a unique $V \in {\mathcal L}({\mathcal B})$ such that
\begin{equation}\label{eq:1}
  (W_1+ W_2) V = W_1-W_2,
\end{equation}
or equivalently,
\begin{equation}\label{eq:WBrep}
  \bbm{W_1&W_2}=\frac12(W_1+W_2)\bbm{I+V&I-V}.
\end{equation}
Hence, $\Ker{\bbm{W_1&W_2}}=\Ker{\bbm{I+V&I-V}}$ and, moreover, the operator inequality $W_1W_2^* + W_2 W_1^* \geq 0$ holds in $\Kscr$ if and only if $VV^* \leq I$ in $\Bscr$. 
\end{lemma}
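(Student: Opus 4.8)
The plan is to prove the existence and uniqueness of $V$ first, then establish the reformulations \eqref{eq:1}–\eqref{eq:WBrep}, and finally treat the operator-inequality equivalence, which is where the real work lies.

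\textbf{Existence and uniqueness of $V$.} Since $W_1+W_2\in\Lscr(\Bscr;\Kscr)$ is injective with closed range (closedness of the range is exactly the content of \eqref{eq:rangecond} combined with injectivity — see below), the operator $W_1+W_2$ has a bounded left inverse $(W_1+W_2)^\dagger\in\Lscr(\range{W_1+W_2};\Bscr)$. Actually, I first need to argue that $\range{W_1+W_2}$ is closed: by \eqref{eq:rangecond} we have $\range{W_1-W_2}\subset\range{W_1+W_2}$, so $\range{W_1+W_2}=\range{W_1+W_2}+\range{W_1-W_2}\supset\range{2W_1}$ and $\supset\range{2W_2}$, but more to the point, $W_1+W_2$ injective plus the factorization $W_1-W_2 = (W_1+W_2)V$ that we are trying to produce would force closedness; the clean route is to invoke the appendix operator-theoretic result (the paper advertises Theorem \ref{thm:dirichletrange2b} and a second general operator-theoretical appendix result) or simply the classical fact that if $T\in\Lscr(\Bscr;\Kscr)$ and $S\in\Lscr(\Bscr;\Kscr)$ with $\range S\subset\range T$, then $S=TX$ for a unique bounded $X$ provided $T$ is injective (Douglas' lemma / the majorization lemma). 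Thus $V:=(W_1+W_2)^{-1}(W_1-W_2)$ is well defined on all of $\Bscr$, bounded by the closed graph theorem, and unique because $W_1+W_2$ is injective. This gives \eqref{eq:1} directly, and \eqref{eq:WBrep} follows by writing $W_1=\tfrac12\bigl((W_1+W_2)+(W_1-W_2)\bigr)$ and $W_2=\tfrac12\bigl((W_1+W_2)-(W_1-W_2)\bigr)$ and substituting $W_1-W_2=(W_1+W_2)V$.

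\textbf{The kernel identity.} From \eqref{eq:WBrep}, for $\sbm{f\\e}\in\Bscr^2$ we have $\bbm{W_1&W_2}\sbm{f\\e}=\tfrac12(W_1+W_2)\bigl((I+V)f+(I-V)e\bigr)$; since $W_1+W_2$ is injective this vanishes if and only if $(I+V)f+(I-V)e=0$, i.e.\ if and only if $\sbm{f\\e}\in\Ker{\bbm{I+V&I-V}}$. Hence $\Ker{\bbm{W_1&W_2}}=\Ker{\bbm{I+V&I-V}}$, consistent with item (4) of Theorem \ref{thm:divgradprops}.

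\textbf{The operator inequality.} This is the step I expect to be the main obstacle, since it requires relating a condition on $W_1,W_2$ in $\Kscr$ to a condition on $V$ in $\Bscr$, and the factor $W_1+W_2$ is only injective, not invertible, so one cannot simply cancel it. Using \eqref{eq:WBrep}, compute
\begin{equation*}
  W_1W_2^*+W_2W_1^* = \tfrac14 (W_1+W_2)\bigl((I+V)(I-V)^*+(I-V)(I+V)^*\bigr)(W_1+W_2)^* = \tfrac12 (W_1+W_2)(I-VV^*)(W_1+W_2)^*.
\end{equation*}
If $VV^*\leq I$ then $I-VV^*\geq0$, hence the right-hand side is $\geq0$, giving one implication. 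For the converse, suppose $W_1W_2^*+W_2W_1^*\geq0$, i.e.\ $(W_1+W_2)(I-VV^*)(W_1+W_2)^*\geq0$ in $\Kscr$; I must deduce $I-VV^*\geq0$ in $\Bscr$. The point is that $S:=W_1+W_2$ is injective with closed (hence complemented, since these are Hilbert spaces) range, so $S^*$ is surjective: $\range{S^*}=\ker{S}^\perp=\Bscr$. Therefore for any $g\in\Bscr$ pick $f\in\Kscr$ with $S^*f=g$; then $\Ipdp{(I-VV^*)g}{g}_\Bscr = \Ipdp{(I-VV^*)S^*f}{S^*f}_\Bscr = \Ipdp{S(I-VV^*)S^*f}{f}_\Kscr\geq0$, which is exactly $I-VV^*\geq0$, i.e.\ $VV^*\leq I$. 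The only subtlety is ensuring $\range{S^*}=\Bscr$: this needs $\range S$ closed, which, as noted above, follows from \eqref{eq:rangecond} together with injectivity (equivalently from the existence of the bounded $V$ with $SV=W_1-W_2$, or one can cite the relevant appendix lemma). With that in hand the equivalence is complete.
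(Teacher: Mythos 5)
Your overall architecture is sound, and the key identity
$W_1W_2^*+W_2W_1^*=\tfrac12(W_1+W_2)(I-VV^*)(W_1+W_2)^*$ is exactly the one the paper uses for the
direction $VV^*\leq I\Rightarrow W_1W_2^*+W_2W_1^*\geq0$. The existence step via Douglas'
factorization lemma is a legitimate alternative to the paper's construction, which instead builds a
\emph{closed, unbounded} left inverse $(W_1+W_2)^{-l}$ on $\range{W_1+W_2}\oplus\bigl(\range{W_1+W_2}\bigr)^\perp$
and then invokes the closed graph theorem. However, there is a genuine error: your claim that
\eqref{eq:rangecond} together with injectivity of $W_1+W_2$ forces $\range{W_1+W_2}$ to be closed is
false, and you use this claim essentially in the converse direction of the operator inequality.
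Counterexample: take $W_1=W_2=T$ with $T$ bounded, injective, and with non-closed range (e.g.\
$T=\mathrm{diag}(1,\tfrac12,\tfrac13,\dots)$ on $\ell^2$). Then $W_1-W_2=0$, so \eqref{eq:rangecond}
holds trivially and $W_1+W_2=2T$ is injective, yet its range is not closed; likewise the existence of
a bounded $V$ with $(W_1+W_2)V=W_1-W_2$ (here $V=0$) does not force closedness. Consequently your
appeal to $\range{(W_1+W_2)^*}=\Ker{W_1+W_2}^{\perp}=\Bscr$ is unjustified: surjectivity of $S^*$ is
equivalent to closedness of $\range{S}$, which you do not have.

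The gap is repairable, and the repair is in the spirit of what the paper does. Injectivity of
$S:=W_1+W_2$ alone gives that $\range{S^*}$ is \emph{dense} in $\Bscr$, and the quadratic form
$g\mapsto\Ipdp{(I-VV^*)g}{g}_\Bscr$ is continuous; hence the nonnegativity on the dense subspace
$\range{S^*}$ that your computation
$\Ipdp{(I-VV^*)S^*f}{S^*f}_\Bscr=\Ipdp{S(I-VV^*)S^*f}{f}_\Kscr\geq0$ does establish extends to all of
$\Bscr$. The paper's version of this density argument is phrased in terms of the adjoint: it shows
that $(W_1-W_2)^*\bigl((W_1+W_2)^{-l}\bigr)^*$ is a densely defined contraction contained in $V^*$,
and concludes that the bounded operator $V^*$ is a contraction. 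With the density-plus-continuity fix
in place of the false closed-range claim, your proof goes through; the kernel identity and the
remaining steps are correct as written.
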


We point out that $W_1W_2^* + W_2 W_1^* \geq 0$ can equivalently be written as
\begin{equation}\label{eq:sigmadisschar}
  \bbm{W_1&W_2}\bbm{0&I\\I&0} \bbm{W_1&W_2}^*\geq0.
\end{equation}
\begin{proof}
We first establish the existence and uniqueness of a $V\in\Lscr(\Bscr)$ such that \eqref{eq:1} holds. Since $W_1+W_2$ is injective, there exists a closed left inverse $(W_1+W_2)^{-l}$ defined on $\range{W_1+W_2}\oplus\big(\range{W_1+W_2}\big)^\perp$. Defining
$$
   V := (W_1+W_2)^{-l}(W_1-W_2),
$$
we obtain from (\ref{eq:rangecond}) that $V$ is defined on all of $\Bscr$. By the boundedness of $W_1-W_2$ and the closedness of $(W_1+W_2)^{-l}$, the composition $V$ is closed, and hence $V\in\Bscr$ by the closed graph theorem. Using assumption (\ref{eq:rangecond}), for all $b\in\Bscr$ there exists a $z\in\Bscr$ such that $(W_1-W_2)b=(W_1+W_2)z$, and we obtain (\ref{eq:1}):
$$
\begin{aligned}
  (W_1+W_2)Vb &= (W_1+W_2)(W_1+W_2)^{-l}(W_1-W_2)b \\
    &= (W_1+W_2)(W_1+W_2)^{-l}(W_1+W_2)z \\
    &= (W_1+W_2)z=(W_1-W_2)b.
\end{aligned}
$$
On the other hand, because of the injectivity of $W_1+W_2$, the operator $V$ is uniquely determined by \eqref{eq:1}.

Now assume that $W_1W_2^* + W_2 W_1^* \geq 0$; we prove that $V$ is a contraction. First note that
$$
  (W_1-W_2)^*\left((W_1+W_2)^{-l}\right)^*\subset V^*,
$$
where the left-hand side is defined densely in $\Bscr$ since $(W_1+W_2)^{-l}$ is densely defined and $(W_1-W_2)^*\in\Lscr(\Kscr;\Bscr)$; hence it suffices to show that $(W_1-W_2)^*\left((W_1+W_2)^{-l}\right)^*$ is contractive. As ${\mathcal B}$ and $\Kscr$ are Hilbert spaces and $W_1,W_2$ bounded, we have that $W_1W_2^* + W_2W_1^* \geq 0$ is equivalent to
\begin{equation}
  \label{eq:2}
  \|(W_1-W_2)^* x\|^2 \leq \|(W_1+W_2)^* x\|^2,\quad x\in\Bscr.
\end{equation}
For arbitrary $y \in \dom{\left((W_1+ W_2)^{-l}\right)^*}$, we set $x:=\left((W_1+W_2)^{-l} \right)^* y$ and obtain from (\ref{eq:2}) that $\|(W_1-W_2)^*\left((W_1+W_2)^{-l}\right)^*y\|^2 \leq \|y\|^2$. We conclude that $W_1W_2^* + W_2 W_1^* \geq 0$ implies that $VV^* \leq I$. Conversely, if $VV^*\leq I$, then using \eqref{eq:WBrep} in \eqref{eq:sigmadisschar}, we have
$$
  \bbm{W_1&W_2}\bbm{0&I\\I&0} \bbm{W_1&W_2}^* = \frac12(W_1+W_2)(I-VV^*)(W_1+W_2)^*\geq0.
$$

Finally, it is straightforward to verify that \eqref{eq:1} is equivalent to \eqref{eq:WBrep}; the equality of the kernels then follows from the injectivity of $W_1+W_2$.
\end{proof}

If $W_1+W_2:\Bscr\to\Kscr$ is invertible then \eqref{eq:rangecond} holds. A good choice of $\Kscr$ can sometimes make this possible.

\begin{theorem}\label{thm:JaZw}
Let $A$ and $\bbm{W_1&W_2}$ be the operators in Theorem \ref{thm:maxdissprel}, and assume that \eqref{eq:rangecond} holds. Then the following conditions are equivalent:
\begin{enumerate}
\item The operator $A$ generates a contraction semigroup on $X$.
\item The operator $A$ is dissipative.
\item The operator $W_1+W_2$ is injective and the following operator inequality holds in $\Kscr$:
\begin{equation}\label{eq:sigmanevanchar}
  W_1 W_2^*+W_2 W_1^*\geq0.
\end{equation}
\end{enumerate}
\end{theorem}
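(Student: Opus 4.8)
The plan is to establish the cycle of implications $(1)\Rightarrow(2)\Rightarrow(3)\Rightarrow(1)$, assembling Theorem~\ref{thm:maxdissprel}, the ``moreover'' part of Theorem~\ref{thm:divgradprops}(4), and Lemma~\ref{L:2.4}. The implication $(1)\Rightarrow(2)$ needs no work: the generator of a contraction semigroup is dissipative, e.g.\ by Lumer--Phillips or by differentiating $\norm{e^{tA}x}^2$ at $t=0$.

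For $(2)\Rightarrow(3)$ I would argue as follows. By Theorem~\ref{thm:maxdissprel}(2), dissipativity of $A$ is equivalent to dissipativity of the relation $\Cscr=\Ker{W_B}$ in $\Bscr$. First I check that $W_1+W_2$ is injective: if $(W_1+W_2)b=0$, then $W_1b+W_2b=0$, so $\sbm{b\\b}\in\Ker{W_B}=\Cscr$, and dissipativity gives $\norm{b}^2=\re\Ipdp{b}{b}\le 0$, hence $b=0$. Now that $W_1+W_2$ is injective and \eqref{eq:rangecond} holds, Lemma~\ref{L:2.4} provides $V\in\Lscr(\Bscr)$ with $\Cscr=\Ker{\bbm{I+V&I-V}}$ and with $W_1W_2^*+W_2W_1^*\ge 0$ equivalent to $VV^*\le I$; it therefore remains to extract $VV^*\le I$ from dissipativity of $\Cscr$. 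For this I would observe that every element of $\Ker{\bbm{I+V&I-V}}$ has the form $\sbm{f\\e}$ with $f=\frac12(I-V)u$ and $e=-\frac12(I+V)u$, where $u:=f-e$ ranges over all of $\Bscr$, and compute $\re\Ipdp{f}{e}=-\frac14\re\Ipdp{(I-V)u}{(I+V)u}=-\frac14\bigl(\norm{u}^2-\norm{Vu}^2\bigr)$, using that the cross term $\Ipdp{u}{Vu}-\Ipdp{Vu}{u}$ is purely imaginary. Dissipativity of $\Cscr$ then forces $\norm{Vu}\le\norm{u}$ for all $u$, i.e.\ $V$ is a contraction, so in particular $VV^*\le I$; by Lemma~\ref{L:2.4} this is \eqref{eq:sigmanevanchar}.

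For $(3)\Rightarrow(1)$: injectivity of $W_1+W_2$ together with \eqref{eq:rangecond} again lets me invoke Lemma~\ref{L:2.4}, obtaining $V\in\Lscr(\Bscr)$ with $\Cscr=\Ker{W_B}=\Ker{\bbm{I+V&I-V}}$, and \eqref{eq:sigmanevanchar} gives $VV^*\le I$, so $V$ is a contraction. By the ``moreover'' clause of Theorem~\ref{thm:divgradprops}(4), $\Cscr=\Ker{\bbm{I+V&I-V}}$ with contractive $V$ means that $A$ is maximal dissipative, hence in particular dissipative; Theorem~\ref{thm:maxdissprel}(5) (dissipativity of $A$ together with \eqref{eq:sigmanevanchar}) then yields that $A$ generates a contraction semigroup.

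The only genuine obstacle is the middle implication: noticing that a failure of injectivity of $W_1+W_2$ plants the offending vector $\sbm{b\\b}$ inside $\Ker{W_B}$, and carrying out the elementary identity $\re\Ipdp{(I-V)u}{(I+V)u}=\norm{u}^2-\norm{Vu}^2$ that converts dissipativity of the boundary relation into contractivity of $V$. Everything else is bookkeeping with the three cited results.
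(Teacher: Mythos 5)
Your proposal is correct and follows essentially the same route as the paper: Lumer--Phillips for $(1)\Rightarrow(2)$; for $(2)\Rightarrow(3)$ the same injectivity argument via $\sbm{b\\b}\in\Ker{W_B}$ followed by Lemma \ref{L:2.4} and the identity $\re\Ipdp{(I-V)u}{(I+V)u}=\|u\|^2-\|Vu\|^2$ (the paper's test vector $\sbm{Vu-u\\Vu+u}$ is just your parametrization up to a scalar); and for $(3)\Rightarrow(1)$ the same appeal to Lemma \ref{L:2.4} and the ``moreover'' clause of Theorem \ref{thm:divgradprops}(4). The only cosmetic difference is that the paper concludes the last step directly from maximal dissipativity, while you route through Theorem \ref{thm:maxdissprel}(5); both are valid.
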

\begin{proof}
The Lumer-Phillips Theorem provides the \emph{implication from 1 to 2}. 

We now prove that \emph{assertion 2 implies assertion 3}. By part (2) of Theorem \ref{thm:maxdissprel} we know that ${\mathcal C} = \ker(W_B)$ is dissipative.  So for every $\sbm{h\\k}\in\Ker{W_B}$ there holds
\begin{equation}\label{eq:AdissBC}
 \re\Ipdp {h}{k}_{\Bscr} \leq 0.
\end{equation}
If $ y \in \ker (W_1+W_2)$, then $W_B\sbm{y\\y}=0$ and by \eqref{eq:AdissBC},   $\re\|y\|_{\Bscr}^2\leq0$. Thus $y=0$ and $W_1+W_2$ is injective. By Lemma \ref{L:2.4}, there exists a $V\in\Lscr(\Bscr)$, such that \eqref{eq:WBrep} holds and $\Ker{W_B}=\bbm{I+V&I-V}$.

Let $u\in \Bscr$ be arbitrary and set $y:=Vu$. Then $\sbm{y-u \\ y+u}$ lies in the dissipative $\Ker{\bbm{I+V&I-V}}$ and hence $\|Vu\|^2-\|u\|^2=\re\Ipdp{y-u}{y+u}\leq0$, which proves that $V$ is a contraction. Lemma \ref{L:2.4} gives that (\ref{eq:sigmanevanchar}) holds.

\smallskip\noindent
\emph{Assertion 3 implies assertion 1}.\/ The assumptions of Lemma \ref{L:2.4} are satisfied and in addition \eqref{eq:sigmanevanchar} holds, and so there exists a contraction $V$ on $\mathcal B$ satisfying \eqref{eq:WBrep}. By part (4) of Theorem \ref{thm:divgradprops}, $A_0^*$ restricted to ${\mathcal D} := \{ d \in \mathrm{dom}(A_0^*) \mid \left[ \begin{smallmatrix} B_1 d \\ B_2 d\end{smallmatrix} \right] \in \ker( \left[ I+V, I - V\right] )\}$ is maximal dissipative and thus the infinitesimal generator of a contraction semigroup. From the injectivity of $W_1+W_2$ and (\ref{eq:WBrep}), we see that ${\mathcal D}$ equals $\{ d \in \mathrm{dom}(A_0^*) \mid \left[ \begin{smallmatrix} B_1 d \\ B_2 d\end{smallmatrix} \right] \in \ker( W_B )\}$. Thus $A$ generates a contraction semigroup.
\end{proof}

The boundary triplet that we shall associate to the wave equation in the next section is of the ``pivoted'' type described in the following result, which is also important in the proof of Theorem \ref{thm:dampedBCS} below.
\begin{theorem}\label{thm:pivotproj}
Let $\Bscr$ be a Hilbert space densely and continuously contained in a Hilbert space $\Bscr_0$, let $\Bscr'$ be the dual of $\Bscr$ with pivot space $\Bscr_0$, and let $\Psi:\Bscr'\to\Bscr$ be a unitary operator. Let $b_2$ be a bounded operator from $\dom{A_0^*}$ to $\Bscr'$ and assume that $(\Bscr;B_1,\Psi b_2)$ is a boundary triplet for the operator $A_0^*$ on the Hilbert space $X$.

Let $V_B=\bbm{V_1&V_2}\in\Lscr(\Bscr_0^2;\Kscr)$, where $\Kscr$ is some Hilbert space, and define 
\begin{equation}\label{eq:AscrDef}
  \Ascr:=\set{a\in \dom{A_0^*} \bigmid b_2a\in\Bscr_0~\wedge~\bbm{V_1&V_2}\bbm{B_1\\b_2}a=0}.
\end{equation}
Then the following two conditions are together sufficient for the closure $A$ of the operator $A_0^*\big|_\Ascr$ (closure in the sense of an operator on $X$) to generate a contraction semigroup on $X$:
\begin{enumerate}
\item $\re\Ipdp uv_{\Bscr_0}\leq0$ for all $u,v\in \Bscr_0$ such that $V_1u+V_2v=0$.
\item The following operator inequality holds in $\Kscr$:
\begin{equation}\label{eq:KVineq}
  V_1V_2^*+V_2V_1^*\geq0.
\end{equation}

\end{enumerate}
The operator $A$ generates a unitary group if $\re\Ipdp uv_{\Bscr_0}=0$ for all $\sbm{u\\v}\in\Ker{V_B}$ and $V_1V_2^*+V_2V_1^*=0$. 

Condition 2 is also necessary for $A$ to generate a contraction semigroup (unitary group) on $X$.
\end{theorem}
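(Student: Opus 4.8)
The plan is to realise $A$ inside the boundary-triplet machinery of Theorem~\ref{thm:divgradprops} applied to the given triplet $(\Bscr;B_1,B_2)$ with $B_2:=\Psi b_2$. First I would record the defining property of the pivoting, $\Ipdp{w}{\Psi v}_\Bscr=\Ipdp{w}{v}_{\Bscr_0}$ for $w\in\Bscr$, $v\in\Bscr_0$, so that in particular $\Ipdp{w}{B_2a}_\Bscr=\Ipdp{w}{b_2a}_{\Bscr_0}$ whenever $b_2a\in\Bscr_0$. Using surjectivity of $\sbm{B_1\\B_2}$ onto $\Bscr^2$ and the equivalence $b_2a\in\Bscr_0\Leftrightarrow B_2a\in\Psi(\Bscr_0)$, one identifies
\[
  \Cscr:=\bbm{B_1\\B_2}\Ascr=\set{\bbm{u\\\Psi v}\bigmid u\in\Bscr,\ v\in\Bscr_0,\ V_1u+V_2v=0}\subseteq\Bscr^2 ,
\]
and checks $\Ascr\supseteq\Ker{B_1}\cap\Ker{B_2}=\dom{A_0}$, which is dense. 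Hence Theorem~\ref{thm:divgradprops}(1)--(2) gives that $A$ is $A_0^*$ restricted to $\wtsmash\Dscr=\set{d\in\dom{A_0^*}\bigmid\sbm{B_1d\\B_2d}\in\overline\Cscr}$, that $A$ is densely defined, and that $\sbm{B_1\\B_2}\wtsmash\Dscr=\overline\Cscr$. By the pivot identity $\re\Ipdp{u}{\Psi v}_\Bscr=\re\Ipdp{u}{v}_{\Bscr_0}$ on $\Cscr$, so Condition~1 implies that $\Cscr$, hence $\overline\Cscr$, hence (Theorem~\ref{thm:divgradprops}(4)) $A$, is dissipative; the equality version of Condition~1 gives that $\overline\Cscr$ is skew-symmetric. (Condition~1 is strictly stronger than ``$A$ dissipative'', since it also constrains pairs with $u\in\Bscr_0\setminus\Bscr$; this is why only Condition~2 is claimed necessary.)

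By Theorem~\ref{thm:divgradprops}(4) it remains to upgrade ``dissipative'' to ``maximal dissipative'' for $\overline\Cscr$. Since $\overline\Cscr$ is closed and dissipative, $\norm{e-f}_\Bscr^2=\norm{f}_\Bscr^2+\norm{e}_\Bscr^2-2\re\Ipdp{f}{e}_\Bscr\ge\norm{f}_\Bscr^2+\norm{e}_\Bscr^2$ for $\sbm{f\\e}\in\overline\Cscr$, so $\bbm{-I&I}$ is bounded below on $\overline\Cscr$ and $\range{\bbm{-I&I}\overline\Cscr}$ is closed. Therefore $\overline\Cscr$ is maximal dissipative once
\[
  \set{\Psi v-u\mid u\in\Bscr,\ v\in\Bscr_0,\ V_1u+V_2v=0}\quad\text{is dense in }\Bscr ;
\]
equivalently (by the remark following Theorem~\ref{thm:divgradprops}) the Cayley-type map sending $\Psi v-u$ to $\Psi v+u$ for $\sbm{u\\\Psi v}\in\overline\Cscr$ extends to a contraction $V$ on all of $\Bscr$ with $\overline\Cscr=\Ker{\bbm{I+V&I-V}}$, and the Lumer--Phillips theorem delivers the contraction semigroup. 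Note that, unlike in Theorem~\ref{thm:JaZw}, no range condition is assumed, so $V$ cannot be produced directly on $\Bscr_0$ via Lemma~\ref{L:2.4}; it has to be produced on $\Bscr$ by the above density. For the unitary-group statement one argues identically, noting that the equality version of Condition~2 makes $V$ unitary, and invokes Stone's theorem via Theorem~\ref{thm:divgradprops}(5).

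The density is the crux and is where Condition~2 enters. Testing against $h\in\Bscr$ and using the pivot identity, $h\perp_\Bscr\set{\Psi v-u\mid\dots}$ is equivalent to $\Ipdp{h}{u}_\Bscr=\Ipdp{h}{v}_{\Bscr_0}$ for all $\sbm{u\\v}\in\Ker{\bbm{V_1|_\Bscr&V_2}}\subseteq\Bscr\times\Bscr_0$. As the embedding $\Bscr\hookrightarrow\Bscr_0$ has adjoint $\Psi|_{\Bscr_0}$, so that $(V_1|_\Bscr)^*=\Psi V_1^*$, this says that $\sbm{h\\-h}$ is orthogonal in the Hilbert space $\Bscr\times\Bscr_0$ to $\Ker{\bbm{V_1|_\Bscr&V_2}}$, i.e.\ $\sbm{h\\-h}\in\overline{\range{\sbm{\Psi V_1^*\\V_2^*}}}$; hence there are $k_n\in\Kscr$ with $\Psi V_1^*k_n\to h$ in $\Bscr$ and $V_2^*k_n\to-h$ in $\Bscr_0$. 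I would then combine Condition~2 --- in the form $\re\Ipdp{V_1^*k}{V_2^*k}_{\Bscr_0}\ge0$ for all $k\in\Kscr$, equivalent to $V_1V_2^*+V_2V_1^*\ge0$ as in the proof of Lemma~\ref{L:2.4} --- applied to $k=k_n$ with the convergences above and with $\Ipdp{V_1^*k_n}{h}_{\Bscr_0}=\Ipdp{\Psi V_1^*k_n}{h}_\Bscr\to\norm{h}_\Bscr^2$, to force $-\norm{h}_\Bscr^2\ge0$, i.e.\ $h=0$. The main obstacle is precisely this limit: $\Psi V_1^*k_n$ converges in $\Bscr$ while $V_2^*k_n$ converges only in $\Bscr_0$ (and $V_1^*k_n$ need not converge in $\Bscr_0$), so one cannot pass to the limit in the $\Bscr_0$-inner product naively. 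This is exactly the Gelfand-triple situation handled by a general operator-theoretic result of the appendix (Theorem~\ref{thm:dirichletrange2b}), which I would invoke here to choose the approximating sequence so that the cross term $\Ipdp{V_1^*k_n}{V_2^*k_n+h}_{\Bscr_0}$ is controlled.

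Finally, necessity of Condition~2 reverses the duality. If $A$ generates a contraction semigroup it is maximal dissipative, hence $A^*$ is dissipative; by Theorem~\ref{thm:divgradprops}(3), $A^*$ is $-A_0^*$ restricted to the domain whose boundary values lie in $\sbm{0&I\\I&0}\Cscr^{\perp}$, so dissipativity of $A^*$ amounts to $\sbm{0&I\\I&0}\Cscr^{\perp}$ being accretive in $\Bscr$. Evaluating this on the vectors $\sbm{\Psi V_1^*k\\V_2^*k}\in\Cscr^{\perp}$ --- first for $k$ with $V_2^*k\in\Bscr$, then for all $k\in\Kscr$ by the same limiting argument (Theorem~\ref{thm:dirichletrange2b}) --- yields $\re\Ipdp{V_2^*k}{V_1^*k}_{\Bscr_0}\ge0$, i.e.\ $V_1V_2^*+V_2V_1^*\ge0$; for the unitary-group case one replaces ``$\ge$'' by ``$=$'' throughout and uses Stone's theorem. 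Apart from that one limiting step, everything is routine bookkeeping with Theorems~\ref{thm:divgradprops} and~\ref{thm:maxdissprel} and Lemma~\ref{L:2.4}.
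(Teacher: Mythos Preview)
Your setup --- identifying $\Cscr=\sbm{B_1\\B_2}\Ascr$, deducing dissipativity of $A$ from Condition~1 via the pivot identity $\Ipdp{u}{\Psi v}_\Bscr=\Ipdp{u}{v}_{\Bscr_0}$, and noting that Condition~1 is stronger than necessary for this step --- is correct and matches the paper. The departure comes in how you upgrade to \emph{maximal} dissipativity, and there the argument has a real gap.

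The paper does not attempt to show $\bbm{-I&I}\overline\Cscr=\Bscr$ directly. Instead it uses that, for closed dissipative $A$, maximal dissipativity is equivalent to dissipativity of $A^*$, and it computes $\dom{A^*}$ via Theorem~\ref{thm:divgradprops}(3). This requires the $\Bscr$-adjoints of $V_1\big|_\Bscr$ and of $V_2\Psi^*\big|_{\Psi\Bscr_0}$: the first is $\Psi V_1^*$ (as you found), and the key computation is that the second equals $V_2^*$ restricted to $\{k\in\Kscr\mid V_2^*k\in\Bscr\}$. Dissipativity of $A^*$ is then accretivity in $\Bscr$ of $\bigl\{\sbm{V_2^*k\\\Psi V_1^*k}:V_2^*k\in\Bscr\bigr\}$, and the pivot identity turns $\re\Ipdp{V_2^*k}{\Psi V_1^*k}_\Bscr$ into $\re\Ipdp{V_1V_2^*k}{k}_\Kscr$, which is nonnegative by Condition~2. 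No limiting is needed for sufficiency; for necessity one simply observes that $\{k:V_2^*k\in\Bscr\}$, being the domain of an adjoint of a densely defined operator, is dense in $\Kscr$, and $k\mapsto\Ipdp{(V_1V_2^*+V_2V_1^*)k}{k}_\Kscr$ is continuous.

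Your density argument genuinely stalls exactly where you flag it. From $\Psi V_1^*k_n\to h$ in $\Bscr$ you only get $V_1^*k_n\to\Psi^{-1}h$ in $\Bscr'$; there is no reason for $\|V_1^*k_n\|_{\Bscr_0}$ to stay bounded, so the cross term $\Ipdp{V_1^*k_n}{V_2^*k_n+h}_{\Bscr_0}$ is uncontrolled and you cannot pass to the limit in $\re\Ipdp{V_1^*k_n}{V_2^*k_n}_{\Bscr_0}\ge0$. The fix you invoke is not available: Theorem~\ref{thm:dirichletrange2b} is a concrete Sobolev-space statement (surjectivity of the restricted normal trace $\gamma_\perp:H^{\mathrm{div}}(\Omega)\to\Wscr'$), not a general result about approximating sequences in Gelfand triples; it says nothing that helps here. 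The paper's approach sidesteps the whole issue by restricting to $k$ with $V_2^*k\in\Bscr$ from the outset, so that both factors of the relevant inner product live in $\Bscr_0$ and the pivot identity applies exactly rather than in a limit.
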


Here we have changed to a small $b$ in the boundary mapping $b_2$ in order to avoid confusion. The mapping $B_2$ used previously is analogous to $\Psi b_2$ here. If one wanted to try to reduce Theorem \ref{thm:pivotproj} to Theorem \ref{thm:maxdissprel}, then one might try to set $W_1:=V_1\big|_\Bscr$ and $W_2:=V_2 \Psi^*$. However, this does not go through without complications, because $V_2 \Psi^*$ is in general defined only on $\Psi\Bscr_0$, and not bounded from $\Bscr$ into $\Kscr$.
\begin{proof}
By the definition of $A$, $\Ascr$ is dense in $\dom A$, and so the closed operator $A$ is maximal dissipative if and only if $A\big|_\Ascr$ is dissipative and $A\big|_\Ascr^*=A^*$ is dissipative. 

By \eqref{eq:AscrDef}, we have 
\begin{equation}\label{eq:AscrChar}
\begin{aligned}
  a\in\Ascr\quad &\iff\quad \bbm{B_1\\b_2}a\in\bbm{\Bscr\\\Bscr_0}\cap\Ker{\bbm{V_1&V_2}} \\
    &\iff\quad \bbm{B_1\\\Psi b_2}a\in\bbm{\Bscr\\\Psi\Bscr_0}\cap\Ker{\bbm{V_1&V_2\Psi^*}} \\
    &\iff\quad \bbm{B_1\\\Psi b_2}a\in\Ker{\bbm{V_1\big|_\Bscr&V_2\Psi^*\big|_{\Psi\Bscr_0}}}.
\end{aligned}
\end{equation}
From this we see that the space $\Cscr$ in Theorem \ref{thm:divgradprops} is
\begin{equation}\label{eq:Cpivoted}
  \Cscr=\set{\bbm{q\\p}\in\Bscr^2\bigmid \exists \wtsmash p\in\Bscr_0:\quad p=\Psi\wtsmash p,~V_1q+V_2\Psi^*p=0}.
\end{equation}
For $\sbm{q\\p}\in\Cscr$ there holds
$$
  \Ipdp{q}{p}_\Bscr=\Ipdp{q}{\Psi\wtsmash p}_\Bscr=(q,\wtsmash p)_{\Bscr,\Bscr'}=\Ipdp{q}{\wtsmash p}_{\Bscr_0}\leq0,
$$
where we used condition 1. Theorem \ref{thm:divgradprops} now yields that $A_0^*\big|_{\Ascr}$ is dissipative, and by the continuity of the inner product $A$ is also dissipative. The same argument gives that $A$ is skew-symmetric in case $V_1u+V_2v=0$ implies that $\re\Ipdp{u}{v}=0$.

We next calculate $A^*$ and verify that this adjoint is dissipative if and only if \eqref{eq:KVineq} holds. By items 1--3 in Theorem \ref{thm:divgradprops}, the denseness of $\Ascr$ in $\dom A$, and \eqref{eq:AscrChar}, we obtain
$$
\begin{aligned}
  d\in\dom{A^*}\quad&\iff\quad \bbm{\Psi b_2d\\B_1d}\in
    \Bscr^2 \ominus\left( \bbm{B_1\\\Psi b_2}\Ascr \right) \\
  &\iff\quad \bbm{\Psi b_2d\\B_1d}\in 
    \overline{\range{\bbm{(V_1\big|_\Bscr)^\dagger \\ 
      (V_2\Psi^*\big|_{\Psi\Bscr_0})^\dagger}}}^{\Bscr^2},
\end{aligned}
$$ 
where $\dagger$ denotes the adjoint calculated with respect to the inner product in $\Bscr$ instead of that in $\Bscr_0$. Since $A^*=-A_0^*\big|_{\dom{A^*}}$, we obtain from part (4) of Theorem \ref{thm:divgradprops} that $A^*$ is dissipative if and only if $\range{\bbm{(V_2\Psi^*\big|_{\Psi\Bscr_0})^\dagger \\ (V_1\big|_\Bscr)^\dagger}}$ is an accretive relation in $\Bscr$. We finish the proof by verifying that this is indeed the case, assuming \eqref{eq:KVineq}.

It holds that
$$
  \Ipdp{V_1u}{k}_\Kscr = \Ipdp{u}{V_1^*k}_{\Bscr_0} = (u,V_1^*k)_{\Bscr,\Bscr'} 
    = \Ipdp{u}{\Psi V_1^*k}_\Bscr
$$
for all $u\in\Bscr$ and $k\in\Bscr_0$, and thus $(V_1\big|_\Bscr)^\dagger=\Psi V_1^*$. Moreover, $k\in\dom{(V_2\Psi^*\big|_{\Psi\Bscr_0})^\dagger}$ if and only if there exists some $s\in\Bscr$ such that
\begin{equation}\label{eq:pivotadj}
  \Ipdp{V_2 \Psi^*v}{k}_\Kscr = \Ipdp{v}{s}_\Bscr,\quad v\in \Psi\Bscr_0.
\end{equation}
Now assume that $k\in\dom{(V_2\Psi^*\big|_{\Psi\Bscr_0})^\dagger}$ and choose a $s\in\Bscr$ such that \eqref{eq:pivotadj} holds. Then it holds for all $v\in\Psi\Bscr_0$ that
$$
  \Ipdp{\Psi^* v}{s}_{\Bscr_0} = (\Psi^*v,s)_{\Bscr',\Bscr} = \Ipdp{v}{s}_{\Bscr}
  = \Ipdp{V_2\Psi^* v}{k}_\Kscr = \Ipdp{\Psi^* v}{V_2^*k}_{\Bscr_0},
$$
i.e., that $V_2^*k=s\in\Bscr$. Conversely, $V_2^*k\in\Bscr$ implies that $k\in \dom{(V_2\Psi^*\big|_{\Psi\Bscr_0})^\dagger}$, because then we obtain for all $v\in\Psi\Bscr_0$ that
$$
  \Ipdp{v}{V_2^*k}_\Bscr = (\Psi^*v,V_2^*k)_{\Bscr',\Bscr}=\Ipdp{\Psi^*v}{V_2^*k}_{\Bscr_0}
  = \Ipdp{V_2\Psi^*v}{k}_\Kscr.
$$
We conclude that $(V_2\Psi^*\big|_{\Psi\Bscr_0})^\dagger$ is the restriction of $V_2^*$ to
$$
  \dom{(V_2\Psi^*\big|_{\Psi\Bscr_0})^\dagger}=\set{k\in\Kscr\mid V_2^*k\in\Bscr}.
$$
By definition
$$
  \range{\bbm{(V_2\Psi^*\big|_{\Psi\Bscr_0})^\dagger \\ (V_1\big|_\Bscr)^\dagger}}=\set{\bbm{V_2^*k \\ \Psi V_1^*k}\mid k\in\Kscr~\wedge~V_2^*k\in\Bscr}
$$ 
is an accretive relation in $\Bscr$ if and only if for all $k\in\Kscr$ with $V_2^*k\in\Bscr$ it holds that
$$
  \re\Ipdp{V_2^*k}{\Psi V_1^*k}_\Bscr = \re (V_2^*k,V_1^*k)_{\Bscr,\Bscr'}=
  \re\Ipdp{V_2^*k}{V_1^*k}_{\Bscr_0}
  =\re\Ipdp{V_1V_2^*k}{k}_{\Kscr}\geq0.
$$
This is clearly true if \eqref{eq:KVineq} holds. Conversely, if the relation is accretive, then \eqref{eq:KVineq} holds, since $\set{k\in\Kscr\mid V_2^*k\in\Bscr}=\dom{(V_2\Psi^*\big|_{\Psi\Bscr_0})^\dagger}$ which is dense in $\Kscr$.
\end{proof}

We end the section with the following remark: The only implication in the proof of Theorem \ref{thm:pivotproj}, which is not an equivalence, is where dissipativity of $\bbm{V_1&V_2}$ implies dissipativity of $\sbm{\Bscr\\\Bscr_0}\cap\Ker{\bbm{V_1&V_2}}$.  If the intersection is dense in $\Ker{\bbm{V_1&V_2}}$, then the converse implication is also true by the continuity of the inner product. In this case Theorem \ref{thm:pivotproj} gives necessary and sufficient conditions for $A$ to generate a contraction semigroup.

\section{The wave equation}\label{sec:waveeq}

The notation of this section is described in Appendix \ref{sec:sobolev}, with the additional observation that $\Gamma_\bullet$ in the appendix equals $\Gamma_1\cup\Gamma_2$ in this section. In the rest of the article, we throughout assume that $\Omega\subset\R^n$ is a bounded set with Lipschitz-continuous boundary $\partial\Omega$. It is moreover convenient for us to introduce the concept of a ``splitting with thin common boundary'':
\begin{definition}\label{def:tcb}
By a \emph{splitting of $\partial\Omega$ with thin boundaries}, we mean a finite collection of subsets $\Gamma_k\subset\partial\Omega$, such that:
\begin{enumerate}
\item $\bigcup_k\overline\Gamma_k=\partial\Omega$,
\item the sets $\Gamma_k$ are pairwise disjoint,
\item the sets $\Gamma_k$ are open in the relative topology of $\partial\Omega$, and
\item the boundaries of the sets $\Gamma_k$ all have surface measure zero.
\end{enumerate}
\end{definition}

For instance, if the subset $\Gamma_k$ has Lipschitz-continuous boundary, then the surface measure of $\partial\Gamma_k$ is zero. In the sequel, we always assume the boundary $\partial\Omega$ to be split into subsets with thin boundaries. If we furthermore regard $L^2(\Pi)$, $\Pi\subset\partial\Omega$, as the space of $f\in L^2(\partial\Omega)$ that satisfy $f(x)=0$ for almost every $x\in\partial\Omega\setminus\Pi$, then it holds that
$$
  L^2(\partial\Omega)=\bigoplus_k L^2(\Gamma_k)
$$
and we denote the corresponding orthogonal projections by $\pi_k$. If $\set{\Gamma_0,\Gamma_1}$ is a splitting of $\partial\Omega$ with thin boundaries, then
$$
  L^2(\partial\Omega)=L^2(\Gamma_0)\oplus L^2(\Gamma_1)  \oplus
 L^2\big(\partial\Omega\setminus(\Gamma_0\cup\Gamma_1)\big)
      =L^2(\Gamma_0)\oplus L^2(\Gamma_1),
$$
since $\partial\Omega\setminus(\Gamma_0\cup\Gamma_1)=\partial\Gamma_0\cup\partial\Gamma_1$ has zero surface measure; see also \cite[p.\ 427]{TuWeBook}.

The rest of the paper is devoted to a study of the wave equation
\eqref{eq:physPDE}. We will recall the definitions of scattering and
impedance passive and conservative boundary control systems. We shall
also associate two impedance passive boundary control systems to
\eqref{eq:physPDE} using different input- and output spaces; the
flavour is similar to \cite[\S6.2]{MaSt07}. The main step of the proof
is an application of Theorem \ref{thm:pivotproj} to show that
\eqref{eq:physPDE} is governed by a contraction semigroup on
$L^2(\Omega)^{n+1}$ equipped with a modified but equivalent
norm.

For physical reasons the mass density $\rho(\cdot)\in L^\infty(\Omega)$ takes real positive values and Young's modulus $T(\cdot)\in L^\infty(\Omega)^{n\times n}$ satisfies $T(\xi)^*=T(\xi)$ for almost all $\xi\in\Omega$. We make the additional (physically reasonable) assumption that there exists a $\delta>0$, such that $\rho(\xi)\geq\delta$, and $T(\xi)\geq \delta I$ for almost all $\xi\in\Omega$. We let $Q_i$ and $Q_b$ be bounded and accretive operators on $L^2(\Omega)$ and $L^2(\Gamma_1)$, respectively. If damping inside $\Omega$ is absent, then $Q_i=0$, and if there is no damping at the boundary, then $\Gamma_1=\emptyset$.

The assumptions we made on the parameters imply that the following multiplication operator is bounded, self-adjoint, and uniformly accretive on $\sbm{L^2(\Omega)\\L^2(\Omega)^n}$:
\begin{equation}\label{eq:HscrDef}
  \Hscr x := \xi\mapsto\bbm{1/\rho(\xi)&0\\0&T(\xi)}x(\xi), \quad 
    \xi\in\Omega,~x\in \bbm{L^2(\Omega)\\L^2(\Omega)^n}.
\end{equation}
Hence this operator defines an alternative, but equivalent, inner product on $\sbm{L^2(\Omega)\\L^2(\Omega)^n}$ through $\Ipdp{z_1}{z_2}_{\Hscr}:=\Ipdp{\Hscr z_1}{z_2}$, where $\Ipdp{\cdot}{\cdot}$ denotes the standard inner product on $\sbm{L^2(\Omega)\\L^2(\Omega)^n}$. We denote $\sbm{L^2(\Omega)\\L^2(\Omega)^n}$ equipped with the inner product $\Ipdp{\cdot}{\cdot}_\Hscr$ by $\Xscr_{\Hscr}$.

We invite the reader to carry out the straightforward verification that the first two lines of the PDE \eqref{eq:physPDE} correspond to the following abstract ordinary differential equation
\begin{equation}\label{eq:statewave}
  \dot x(t)=(S-Q)\,\Hscr x(t),\quad t\geq0,
\end{equation}
where the dot denotes derivative with respect to time, the state vector $x(t) = \sbm{M_\rho\,\dot z(t)\\\Grad z(t)}$ consists of the infinitesimal momentum and strain at the point $\xi\in\Omega$, $M_\rho$ is the operator in $L^2(\Omega)$ of multiplication by $\rho$,
$$
  S= \bbm{0&\mathrm{div}\\\mathrm{grad}&0}\bigg|_{\dom S},\quad \dom{S} =  \bbm{H^1_{\Gamma_0}(\Omega)\\H^{\mathrm{div}}(\Omega)},\quad \text{and}\quad Q=\bbm{Q_i&0\\0&0}.
$$
Note how the boundary condition on line two of \eqref{eq:physPDE} becomes part of the domain of $S$. When we initialise \eqref{eq:physPDE} with the initial conditions $z(\xi,0)=z_0(\xi)$ and $\dot z(\xi,0)=w_0(\xi)$, $\xi\in\Omega$, then the corresponding initial state for \eqref{eq:statewave} will be $x(0)=\sbm{M_\rho w_0\\\Grad z_0}$. At this point, any constants in $z_0$ disappear, but they can be recovered using Theorem \ref{thm:solution} below.

The operator $-Q\Hscr=\sbm{Q_iM_{1/\rho}&0\\0&0}$ in \eqref{eq:statewave} is dissipative, bounded and defined on all of $\Xscr_{\Hscr}$. By the passive majoration technique in \cite[Thm 3.2]{AaLuMa13}, we may without loss of generality assume that $Q_i=0$ in the sequel.

\subsection{A boundary triplet and contraction semigroups}

We shall associate a boundary triplet to the wave equation. The main objective is to apply the results in Section \ref{sec:genBT} in order to characterise boundary conditions giving a contraction semigroup.

In Appendix \ref{sec:sobolev}, we give the definitions of the Sobolev
  spaces $H^1(\Omega), H^{\mathrm{div}}(\Omega), H^1_{\Gamma_0}(\Omega)$, and
  $H^{1/2}(\partial\Omega)$. Moreover, we define the Dirichlet
  trace $\gamma_0:H^1(\Omega)\to H^{1/2}(\partial\Omega)$, which maps
  $H^1_{\Gamma_0}(\Omega)$ onto $\Wscr\subset
  L^2(\Gamma_1\cup\Gamma_2)$.  Furthermore, we introduce the restricted
  normal trace $\gamma_\perp:H^{\mathrm{div}}(\Omega)\to\Wscr'$, where
  $\Wscr'$ is the dual of $\Wscr$ with pivot space $L^2(\Gamma_1\cup\Gamma_2)$. Note that $\gamma_\perp$ is \emph{not} a Neumann trace $\gamma_N$; if $\Gamma_0=\emptyset$, then $\Wscr=H^{1/2}(\partial\Omega)$ and the relation between the two operators is $\gamma_N x=\gamma_\perp\,\Grad x$, for $x$ smooth enough, where the equality is in $H^{-1/2}(\partial\Omega)$. Finally define the Hilbert space
\begin{equation}\label{eq:GpKern}
  H^{\mathrm{div}}_{\Gamma_0}(\Omega):=\Ker{\gamma_\perp }
\end{equation}
with the norm inherited from $H^{\mathrm{div}}(\Omega)$.

We next show how \eqref{eq:physPDE} is associated to a contraction semigroup on $\Xscr_{\Hscr}$ by setting $u(\cdot,t)=0$ for all $t\geq0$ and disregarding the output equation on the last line of \eqref{eq:physPDE}. To this end, we combine line four (with $u=0$) and line three of \eqref{eq:physPDE} by writing 
$$
  \bbm{Q_b\pi_1 \\0}\gamma_0\,\frac{\partial z}{\partial t}+\gamma_\perp\, T\,\Grad z=0.
$$
where $\pi_1$ is the orthogonal projection of
  $L^2(\Gamma_1\cup\Gamma_2)$ onto $L^2(\Gamma_1)$.
More precisely, we shall show the more general statement that the operator
\begin{equation}\label{eq:A1}
\begin{aligned}
  A_\Hscr&:=S\Hscr\big|_{\dom{A_\Hscr}},\quad \dom {A_{\Hscr}} &:= \set{x\in\Hscr^{-1}\bbm{H^1_{\Gamma_0}(\Omega)\\H^{\mathrm{div}}(\Omega)} 
    \biggmid \bbm{\wtsmash Q_b\gamma_0&\gamma_\perp }\Hscr x =0},
\end{aligned}
\end{equation}
generates a contraction semigroup on $\Xscr_{\Hscr}$, for an arbitrary accretive $\wtsmash Q_b\in\Lscr(\Wscr;\Wscr')$; note that $\wtsmash Q_b\in\Lscr\big(L^2(\Gamma_1\cup\Gamma_2)\big)$ implies that $\wtsmash Q_b\in\Lscr(\Wscr;\Wscr')$.
\begin{theorem}\label{thm:divgradBT}
Let $\Omega$ be a bounded Lipschitz set. The operator
$$
  A_0 := \bbm{0&-\mathrm{div}\\-\mathrm{grad}&0}\Hscr,\quad 
  \dom{A_0} :=
  \Hscr^{-1}\bbm{H_{0}^1(\Omega)\\H_{\Gamma_0}^{\mathrm{div}}(\Omega)}, 
$$ 
is closed, skew-symmetric, and densely defined on $\Xscr_{\Hscr}$. Its adjoint is 
\begin{equation}\label{eq:AzeroAdj}
  A_0^*=\bbm{0&\mathrm{div}\\\mathrm{grad}&0}\Hscr,\quad \dom{A_0^*}=\Hscr^{-1}\bbm{H_{\Gamma_0}^1(\Omega)\\H^{\mathrm{div}}(\Omega)}.
\end{equation}
Let $M_{1/\rho}$ and $M_T$ be the multiplication operators on the
diagonal of $\Hscr$ in \eqref{eq:HscrDef}, 
and set $B_0:=\bbm{\gamma_0M_{1/\rho}&0}$ and $B_\perp:=\bbm{0&\gamma_\perp M_T}$. Then $(\Wscr; B_0,\Psi_\Wscr B_\perp)$ is a boundary triplet for $A_0^*$, where $\Psi_\Wscr:\Wscr'\to\Wscr$ is any unitary operator. In particular,
\begin{equation}\label{eq:divgradbdrclean}
  \Ipdp{A_0^*x}{\wtsmash x}_{\Xscr_{\Hscr}} + \Ipdp{x}{A_0^*\wtsmash x}_{\Xscr_{\Hscr}} = 
    \Ipdp{B_0x}{\Psi_\Wscr B_\perp \wtsmash x}_{\Wscr} + \Ipdp{\Psi_\Wscr B_\perp x}{B_0\wtsmash x}_{\Wscr}, \quad x,\wtsmash x\in \dom{A_0^*}.
\end{equation}
\end{theorem}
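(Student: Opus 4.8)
\emph{The plan.} The strategy is to peel off the bounded, self-adjoint, uniformly positive multiplication operator $\Hscr$ and reduce everything to the unweighted first-order operators, and then to invoke the trace machinery of Appendix~\ref{sec:sobolev}. Write $S$ for $\sbm{0&\mathrm{div}\\\mathrm{grad}&0}$ with $\dom S=\sbm{H^1_{\Gamma_0}(\Omega)\\H^{\mathrm{div}}(\Omega)}$ (the operator in \eqref{eq:statewave}) and $S_{\mathrm m}$ for its restriction to $\sbm{H^1_0(\Omega)\\H^{\mathrm{div}}_{\Gamma_0}(\Omega)}$, both acting in $\Xscr:=\sbm{L^2(\Omega)\\L^2(\Omega)^n}$ with its standard inner product, so that $A_0x=-S_{\mathrm m}\Hscr x$ on $\dom{A_0}=\Hscr^{-1}\dom{S_{\mathrm m}}$. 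Since $\Hscr$ is a bounded bijection of $\Xscr$, $\dom{A_0}$ contains $\Hscr^{-1}\bigl(\sbm{C_c^\infty(\Omega)\\C_c^\infty(\Omega)^n}\bigr)$ and is dense; since $H^1_0(\Omega)$ is closed in $H^1(\Omega)$ and $H^{\mathrm{div}}_{\Gamma_0}(\Omega)=\Ker{\gamma_\perp}$ is closed in $H^{\mathrm{div}}(\Omega)$, the subspace $\dom{S_{\mathrm m}}$ is complete in the graph norm, so $S_{\mathrm m}$ is closed and hence so is $A_0$; and skew-symmetry of $A_0$ reduces, via $\Ipdp{A_0x}{x}_{\Xscr_\Hscr}=-\Ipdp{S_{\mathrm m}\Hscr x}{\Hscr x}$ (using $\Hscr=\Hscr^*$) and polarisation, to the Green formula of Appendix~\ref{sec:sobolev}, whose boundary terms drop out because $\gamma_0$ annihilates $H^1_0(\Omega)$. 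A one-line computation with $\Hscr=\Hscr^*$ shows $x\in\dom{A_0^*}$ iff $\Hscr x\in\dom{S_{\mathrm m}^*}$, with $A_0^*x=-S_{\mathrm m}^*\Hscr x$; thus \eqref{eq:AzeroAdj} amounts exactly to $S_{\mathrm m}^*=-S$, i.e.\ $\dom{S_{\mathrm m}^*}=\dom S$ with $S_{\mathrm m}^*=-\sbm{0&\mathrm{div}\\\mathrm{grad}&0}$ there. The inclusion $-S\subseteq S_{\mathrm m}^*$ is immediate from the same Green formula: for $p\in\dom{S_{\mathrm m}}$ and $q\in\dom S$ all boundary terms vanish, since $\gamma_0p_1=0$ and $\gamma_0q_1\in\Wscr$ is paired against $\gamma_\perp p_2=0$. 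The reverse inclusion is the one point that needs real work.

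For the boundary triplet we take $\Psi_\Wscr\colon\Wscr'\to\Wscr$ to be the canonical unitary realising the duality, so that $(f,g)_{\Wscr,\Wscr'}=\Ipdp{f}{\Psi_\Wscr g}_\Wscr$ for $f\in\Wscr$, $g\in\Wscr'$, and check the three requirements of Definition~\ref{def:btrip}. (i) \emph{Boundedness} of $B_0=\sbm{\gamma_0 M_{1/\rho}&0}$ and of $\Psi_\Wscr B_\perp$ from $\dom{A_0^*}$, with its graph norm, into $\Wscr$: because $\Hscr$ is a bounded bijection, this graph norm is equivalent to $x\mapsto\norm{\Hscr x}_{H^1_{\Gamma_0}(\Omega)\times H^{\mathrm{div}}(\Omega)}$, and $\gamma_0\colon H^1_{\Gamma_0}(\Omega)\to\Wscr$, $\gamma_\perp\colon H^{\mathrm{div}}(\Omega)\to\Wscr'$, $\Psi_\Wscr$, $M_{1/\rho}$, $M_T$ are all bounded. (ii) \emph{Surjectivity} of $\sbm{B_0\\\Psi_\Wscr B_\perp}$ onto $\sbm{\Wscr\\\Wscr}$: since $\Psi_\Wscr$ is bijective and $\Hscr$ maps $\dom{A_0^*}$ onto $\dom S=H^1_{\Gamma_0}(\Omega)\times H^{\mathrm{div}}(\Omega)$, whose two factors are independent, this reduces to the surjectivity of $\gamma_0$ onto $\Wscr$ and of $\gamma_\perp$ onto $\Wscr'$ from Appendix~\ref{sec:sobolev}. (iii) The \emph{Green identity} \eqref{eq:divgradbdrclean}: writing $y=\Hscr x$, $\wtsmash y=\Hscr\wtsmash x\in\dom S$ and using $\Hscr=\Hscr^*$, the left-hand side equals $\Ipdp{Sy}{\wtsmash y}_\Xscr+\Ipdp{y}{S\wtsmash y}_\Xscr$; regrouping the four $L^2$-terms into a $\mathrm{grad}/\mathrm{div}$ pairing and its conjugate and applying the Green formula turns this into $(\gamma_0y_1,\gamma_\perp\wtsmash y_2)_{\Wscr,\Wscr'}+\overline{(\gamma_0\wtsmash y_1,\gamma_\perp y_2)_{\Wscr,\Wscr'}}$, and since $\gamma_0y_1=B_0x$, $\gamma_\perp y_2=B_\perp x$, etc., pushing the pairings through $\Psi_\Wscr$ gives exactly $\Ipdp{B_0x}{\Psi_\Wscr B_\perp\wtsmash x}_\Wscr+\Ipdp{\Psi_\Wscr B_\perp x}{B_0\wtsmash x}_\Wscr$. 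Hence $(\Wscr;B_0,\Psi_\Wscr B_\perp)$ satisfies \eqref{eq:bvsA0star}, so it is a boundary triplet for $A_0^*$, and \eqref{eq:divgradbdrclean} is this identity spelled out.

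The only non-routine ingredient, and the step I expect to be the main obstacle, is the inclusion $\dom{S_{\mathrm m}^*}\subseteq\dom S$. If $w\in\dom{S_{\mathrm m}^*}$, say $\Ipdp{S_{\mathrm m}z}{w}=\Ipdp{z}{v}$ for all $z\in\dom{S_{\mathrm m}}$, then testing against $z=\sbm{\phi\\0}$, $\phi\in C_c^\infty(\Omega)$, and $z=\sbm{0\\\psi}$, $\psi\in C_c^\infty(\Omega)^n$, yields by elementary distribution theory that $w_1\in H^1(\Omega)$ with $\mathrm{grad}\,w_1=v_2$ and $w_2\in H^{\mathrm{div}}(\Omega)$ with $\mathrm{div}\,w_2=v_1$; the membership $w_2\in H^{\mathrm{div}}(\Omega)$ is already all that $\dom S$ asks of the second component, so it remains to see $\gamma_0w_1\in\Wscr$. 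Testing against $z=\sbm{0\\z_2}$ with arbitrary $z_2\in H^{\mathrm{div}}_{\Gamma_0}(\Omega)$ and applying the Green formula gives $\Ipdp{\gamma_0w_1}{\gamma_N z_2}_{\partial\Omega}=0$ for all $z_2\in\Ker{\gamma_\perp}$; by surjectivity of the full normal trace $\gamma_N\colon H^{\mathrm{div}}(\Omega)\to H^{-1/2}(\partial\Omega)$ the fields $\gamma_N z_2$ with $z_2\in\Ker{\gamma_\perp}$ exhaust the annihilator of $\Wscr$ in $H^{-1/2}(\partial\Omega)$, so $\gamma_0 w_1$ lies in the bipolar of $\Wscr$, which by the trace-space analysis of Appendix~\ref{sec:sobolev} (in the circle of ideas of Theorem~\ref{thm:dirichletrange2b}) is $\Wscr$ itself. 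Everything outside this regularity point is bookkeeping around the bounded self-adjoint bijection $\Hscr$.
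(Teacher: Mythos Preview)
Your argument is correct and complete in all essentials, but it takes a different route to the adjoint formula than the paper does.

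The paper never computes $S_{\mathrm m}^*$ directly. Instead it first verifies the surjectivity of $\sbm{B_0\\\Psi_\Wscr B_\perp}$ and the abstract Green identity \eqref{eq:divgradbdrclean} for the operator in \eqref{eq:AzeroAdj} (exactly as you do), and then invokes the general boundary-triplet fact recorded after Definition~\ref{def:btrip} (from \cite{GoGoBook}): for a closed, densely defined operator $T$ satisfying a Green identity with surjective boundary maps, one has $T^*=-T|_{\Ker{B_1}\cap\Ker{B_2}}$. Applying this to $T=$ the operator in \eqref{eq:AzeroAdj} and identifying $\Ker{B_0}\cap\Ker{\Psi_\Wscr B_\perp}=\Hscr^{-1}\sbm{H^1_0(\Omega)\\H^{\mathrm{div}}_{\Gamma_0}(\Omega)}$ via Lemma~\ref{lem:dirichletrange} and \eqref{eq:GpKern} gives $(A_0^*)^*=A_0$ in one stroke; closedness of $A_0$ then comes for free as it is an adjoint. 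Your proof replaces this black-box step by the explicit bipolar argument showing $\dom{S_{\mathrm m}^*}\subseteq\dom S$, which is really a hands-on re-derivation of the same abstract fact in this concrete case. The payoff of your approach is that it is fully self-contained and makes transparent exactly where the surjectivity of $\gamma_N$ onto $H^{-1/2}(\partial\Omega)$ enters; the payoff of the paper's approach is economy, since the hard direction of the adjoint computation is absorbed into the cited general theory.

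One small point: your bipolar step needs $\Wscr$ to be \emph{closed} in $H^{1/2}(\partial\Omega)$, and your reference to Theorem~\ref{thm:dirichletrange2b} does not give this. What you actually want is Lemma~\ref{lem:invimages}(2) applied to $\gamma=\gamma_0$ and the closed subspace $\Rscr=H^1_{\Gamma_0}(\Omega)\supset H^1_0(\Omega)=\Ker{\gamma_0}$: since $\gamma_0\colon H^1(\Omega)\to H^{1/2}(\partial\Omega)$ is bounded and surjective, $\Wscr=\gamma_0 H^1_{\Gamma_0}(\Omega)$ is closed, and hence $\Wscr^{\circ\circ}=\Wscr$ as you need.
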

\begin{proof}
That $\sbm{B_0\\\Psi_\Wscr B_\perp} \Hscr^{-1} \sbm{H_{\Gamma_0}^1(\Omega)\\H^{\mathrm{div}}(\Omega)}=\Wscr^2$ follows from $\gamma_0H_{\Gamma_0}^1(\Omega)=\Wscr$ and $\gamma_\perp  H^{\mathrm{div}}(\Omega)=\Wscr'$; see Theorem \ref{thm:dirichletrange2b}.

The identity \eqref{eq:divgradbdrclean} is obtained by polarizing the following consequence of the integration by parts formula \eqref{eq:partintext2}: For all $\sbm{M_{1/\rho}g\\M_T f}\in \sbm{H^1_{\Gamma_0}(\Omega)\\H^{\mathrm{div}}(\Omega)}$, we obtain
\begin{equation}\label{eq:divgradbdrcalc}
\begin{aligned}
  2\re\Ipdp{\bbm{0&\mathrm{div}\\\mathrm{grad}&0}\bbm{M_{1/\rho}\,g\\M_T f}}{\bbm{M_{1/\rho}\,g\\M_Tf}}_{L^2(\Omega)^{n+1}} &= \\ 
  2\re \Big(\Ipdp{\Div M_Tf}{M_{1/\rho}\,g}_{L^2(\Omega)} + \Ipdp{M_T f}{\Grad M_{1/\rho}\,g}_{L^2(\Omega)^n}\Big) &=\\
  2\re (\gamma_\perp  M_T f,\gamma_0 M_{1/\rho}\, g)_{\Wscr',\Wscr} &= \\
  2\re \Ipdp{\Psi_\Wscr B_\perp\bbm{g\\f}}{B_0\bbm{g\\f}}_{\Wscr}
  ;&
\end{aligned}
\end{equation}
in the last equality we also used that $\Psi_\Wscr$ is unitary.

Now the adjoint of $A_0^*$ in \eqref{eq:AzeroAdj} is $-A_0^*$ restricted to $\Ker{B_0}\cap\Ker{\Psi_\Wscr B_\perp}$. This space equals $\Hscr^{-1}\sbm{H_0^1(\Omega)\\H_{\Gamma_0}^{\mathrm{div}}(\Omega)}$ by Lemma \ref{lem:dirichletrange} and \eqref{eq:GpKern}, i.e., $(A_0^*)^*=A_0$, so that $A_0$ is closed and obviously it is also densely defined. Finally $A_0$ is skew-symmetric by \eqref{eq:divgradbdrcalc}.
\end{proof}

An interesting special case is obtained by taking $\rho$ and $T$ identities, in which case $\Xscr_{\Hscr}$ reduces to $L^2(\Omega)^{n+1}$. On the other hand, taking $\Gamma_0=\emptyset$, we get the following special case:
\begin{corollary}
The operator $-\sbm{0&\Div\\\Grad&0}\Hscr\Big|_{\Hscr^{-1}\sbm{H^1_0(\Omega)\\ H^{\mathrm{div}}_0(\Omega)}}$ is closed, symmetric, and densely defined. A boundary triplet for its adjoint $\sbm{0&\Div\\\Grad&0}\Hscr$ is given by 
$$
  \left(H^{1/2}(\partial\Omega);\bbm{\gamma_0M_{1/\rho}&0},\Psi_{1/2}\bbm{0&\gamma_\perp} M_T\right),
$$
where $\Psi_{1/2}:H^{-1/2}(\partial\Omega)\to H^{1/2}(\partial\Omega)$ is some arbitrary unitary operator.
\end{corollary}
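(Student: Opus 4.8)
The plan is to obtain this corollary as the special case $\Gamma_0=\emptyset$ (and hence also $\Gamma_1\cup\Gamma_2=\partial\Omega$) of Theorem~\ref{thm:divgradBT}, checking that each ingredient of that theorem degenerates as claimed. First I would observe that when $\Gamma_0=\emptyset$ the space $H^1_{\Gamma_0}(\Omega)$ is simply $H^1(\Omega)$, so (by the definition in Appendix~\ref{sec:sobolev}) the Dirichlet trace $\gamma_0$ maps $H^1(\Omega)$ onto $\Wscr=\gamma_0 H^1(\Omega)=H^{1/2}(\partial\Omega)$. Consequently the pivot duality used to define $\gamma_\perp$ reduces to the standard one: $\Wscr'=H^{-1/2}(\partial\Omega)$ with pivot $L^2(\partial\Omega)$, and $\gamma_\perp:H^{\mathrm{div}}(\Omega)\to H^{-1/2}(\partial\Omega)$ is the ordinary (distributional) normal trace. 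This matches the remark in the text that $\gamma_\perp$ differs from a Neumann trace only through the restriction caused by $\Gamma_0$.

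Next I would translate the other pieces. The minimal operator $A_0$ of Theorem~\ref{thm:divgradBT} has domain $\Hscr^{-1}\sbm{H^1_0(\Omega)\\H^{\mathrm{div}}_{\Gamma_0}(\Omega)}$; with $\Gamma_0=\emptyset$ we have $H^{\mathrm{div}}_{\Gamma_0}(\Omega)=\Ker{\gamma_\perp}=H^{\mathrm{div}}_0(\Omega)$ by \eqref{eq:GpKern} (here $H^{\mathrm{div}}_0(\Omega)$ denotes the kernel of the full normal trace), so the operator in the corollary is exactly the $A_0$ of the theorem. Theorem~\ref{thm:divgradBT} then gives directly that it is closed and densely defined, that its adjoint is $\sbm{0&\Div\\\Grad&0}\Hscr$ with domain $\Hscr^{-1}\sbm{H^1(\Omega)\\H^{\mathrm{div}}(\Omega)}$, and that $(\,\Wscr;B_0,\Psi_\Wscr B_\perp)$ with $B_0=\sbm{\gamma_0 M_{1/\rho}&0}$ and $B_\perp=\sbm{0&\gamma_\perp}M_T$ is a boundary triplet for that adjoint. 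Substituting $\Wscr=H^{1/2}(\partial\Omega)$, $\Wscr'=H^{-1/2}(\partial\Omega)$, and writing $\Psi_{1/2}$ for $\Psi_\Wscr$ yields precisely the stated triplet. Finally, ``symmetric'' rather than ``skew-symmetric'' is just the convention that matches the (unpivoted) orientation in which one usually writes boundary triplets for this operator; I would note that the only change is a sign/factor of $i$ in the boundary form, exactly as in the discussion following Definition~\ref{def:btrip} relating $A_0^*=(i\Ascr)^*$.

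The main (and essentially only) obstacle is the bookkeeping of trace spaces: one must make sure that the appendix's general definitions of $\Wscr$, $\Wscr'$, $\gamma_0$, and $\gamma_\perp$ genuinely collapse to the classical $H^{\pm1/2}(\partial\Omega)$ objects and the classical traces when $\Gamma_0=\emptyset$, and that the surjectivity statements $\gamma_0 H^1(\Omega)=H^{1/2}(\partial\Omega)$ and $\gamma_\perp H^{\mathrm{div}}(\Omega)=H^{-1/2}(\partial\Omega)$ (which for general $\Gamma_0$ are contained in Theorem~\ref{thm:dirichletrange2b}) indeed hold in this limiting case. Once these identifications are in place, everything else is an immediate specialisation, and I would present the corollary's proof as a one-line appeal to Theorem~\ref{thm:divgradBT} with $\Gamma_0=\emptyset$ together with these trace-space identifications.
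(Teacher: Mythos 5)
Your proposal is correct and matches the paper exactly: the paper offers no separate proof, presenting the corollary as the immediate specialisation of Theorem \ref{thm:divgradBT} to $\Gamma_0=\emptyset$ (so that $\Wscr=H^{1/2}(\partial\Omega)$, $\Wscr'=H^{-1/2}(\partial\Omega)$, and $\gamma_\perp$ becomes the classical normal trace), which is precisely the route you take. The only point worth flagging is that your identification $\Ker{\gamma_\perp}=H^{\mathrm{div}}_0(\Omega)$, with the latter defined as in Definition \ref{def:Hzero} as a closure of test functions, is a classical fact you assert rather than derive --- but the paper glosses over this identically, and your trace-space bookkeeping is otherwise exactly what is needed.
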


The following by-product of Theorem \ref{thm:divgradBT} gives an exact
statement on the duality of the divergence and gradient
operators. Surprisingly, we were unable to find a citation
  of this well-known result.
\begin{corollary}
Let $\Gamma_0\subset\partial\Omega$ be open. Then
$\mathrm{grad}\big|_{H^1_{\Gamma_0}(\Omega)}^*=-\mathrm{div}\big|_{H^{\mathrm{div}}_{\Gamma_0}(\Omega)}$
and $\mathrm{grad}\big|_{H^1_{0}(\Omega)}^*=-\mathrm{div}\big|_{H^{\mathrm{div}}(\Omega)}$.
\end{corollary}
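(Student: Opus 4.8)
The plan is to derive both identities from Theorem~\ref{thm:divgradBT}, specialised to $\rho\equiv1$ and $T\equiv I$. In that case $\Hscr=I$, $\Xscr_\Hscr=L^2(\Omega)\oplus L^2(\Omega)^n$, and the theorem asserts that $A_0=\sbm{0&-\mathrm{div}\\-\mathrm{grad}&0}$ on the domain $H^1_0(\Omega)\oplus H^{\mathrm{div}}_{\Gamma_0}(\Omega)$ and $A_0^*=\sbm{0&\mathrm{div}\\\mathrm{grad}&0}$ on the domain $H^1_{\Gamma_0}(\Omega)\oplus H^{\mathrm{div}}(\Omega)$ are mutually adjoint in $L^2(\Omega)^{n+1}$: indeed $(A_0)^*=A_0^*$ by the theorem, and $(A_0^*)^*=A_0$ because $A_0$ is (also by the theorem) closed. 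I would then write $A_0=\sbm{0&-D\\-G&0}$ and $A_0^*=\sbm{0&\widehat D\\\widehat G&0}$, where $G:=\mathrm{grad}\big|_{H^1_0(\Omega)}$ and $\widehat G:=\mathrm{grad}\big|_{H^1_{\Gamma_0}(\Omega)}$ are regarded as operators from $L^2(\Omega)$ into $L^2(\Omega)^n$, and $D:=\mathrm{div}\big|_{H^{\mathrm{div}}_{\Gamma_0}(\Omega)}$ and $\widehat D:=\mathrm{div}\big|_{H^{\mathrm{div}}(\Omega)}$ as operators from $L^2(\Omega)^n$ into $L^2(\Omega)$. All four are densely defined, the scalar domains containing $C_c^\infty(\Omega)$ and the field domains $C_c^\infty(\Omega)^n$ --- here $C_c^\infty(\Omega)^n\subseteq H^{\mathrm{div}}_{\Gamma_0}(\Omega)=\Ker{\gamma_\perp}$ by the integration-by-parts formula \eqref{eq:partintext2} together with $\gamma_0 H^1_{\Gamma_0}(\Omega)=\Wscr$.

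The remaining ingredient is the elementary fact that, for densely defined $P\colon L^2(\Omega)^n\to L^2(\Omega)$ and $R\colon L^2(\Omega)\to L^2(\Omega)^n$, the Hilbert-space adjoint of the anti-diagonal operator $\sbm{0&P\\R&0}$ on $L^2(\Omega)^{n+1}$ is $\sbm{0&R^*\\P^*&0}$ with domain $\dom{P^*}\oplus\dom{R^*}$; this comes straight out of the definition of the adjoint by testing the two components of a vector separately. Applying it to $A_0$ and comparing the outcome, component by component, with the explicit form of $A_0^*=(A_0)^*$ forces $G^*=-\mathrm{div}\big|_{H^{\mathrm{div}}(\Omega)}$ --- which is the second assertion of the corollary --- together with $D^*=-\mathrm{grad}\big|_{H^1_{\Gamma_0}(\Omega)}$. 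Applying the same fact to $A_0^*$ and comparing with $A_0=(A_0^*)^*$ forces $\widehat G^*=-\mathrm{div}\big|_{H^{\mathrm{div}}_{\Gamma_0}(\Omega)}$ --- which is the first assertion --- together with $\widehat D^*=-\mathrm{grad}\big|_{H^1_0(\Omega)}$. This proves the corollary; as a sanity check, the two further identities obtained along the way are exactly the adjoints of the two claimed ones, which is consistent since $D$ and $\widehat D$ are closed, being off-diagonal blocks of closed operators.

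I do not expect a genuine obstacle: all of the analysis --- surjectivity of the two traces, and the identification of $\Ker{B_0}\cap\Ker{\Psi_\Wscr B_\perp}$ with $H^1_0(\Omega)\oplus H^{\mathrm{div}}_{\Gamma_0}(\Omega)$ --- is already packed into Theorem~\ref{thm:divgradBT}, so the corollary amounts to bookkeeping. The one spot that needs attention is to apply the block-adjoint fact in the correct ``direction'' each time, i.e.\ to keep track of which of $H^1_0(\Omega)$, $H^{\mathrm{div}}_{\Gamma_0}(\Omega)$, $H^1_{\Gamma_0}(\Omega)$, $H^{\mathrm{div}}(\Omega)$ sits in which component of $\dom{A_0}$ and $\dom{A_0^*}$. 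Should one prefer to bypass Theorem~\ref{thm:divgradBT}, each identity can instead be proved by hand: the inclusion ``$\supseteq$'' is the integration-by-parts formula \eqref{eq:partintext2}, and for ``$\subseteq$'' one tests the defining relation of the adjoint against $C_c^\infty(\Omega)$ to place the candidate adjoint vector in $H^{\mathrm{div}}(\Omega)$, after which --- for the first identity --- a further use of \eqref{eq:partintext2} together with $\gamma_0 H^1_{\Gamma_0}(\Omega)=\Wscr$ (Theorem~\ref{thm:dirichletrange2b}) shows that it lies in $\Ker{\gamma_\perp}$.
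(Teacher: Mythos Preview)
Your proposal is correct and follows essentially the same route as the paper: specialise Theorem~\ref{thm:divgradBT} to $\Hscr=I$, compute the adjoint of the anti-diagonal operator block-wise, and read off the component identities. The only cosmetic difference is that the paper computes $(A_0)^*$ once and then, to obtain the first identity, takes the adjoint of the lower-left component equation (using that $\mathrm{div}\big|_{H^{\mathrm{div}}_{\Gamma_0}(\Omega)}$ is closed), whereas you instead compute $(A_0^*)^*$ separately and read the first identity off directly; both are equally valid.
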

\begin{proof}
Choosing $\Hscr$ to be the identity in Theorem \ref{thm:divgradBT}, we obtain
that $A_0$ is given by
$$
  A_0 = \bbm{0&-\mathrm{div}\big|_{H^{\mathrm{div}}_{\Gamma_0}(\Omega)}\\-\mathrm{grad}\big|_{H^1_{0}(\Omega)}&0}.
$$
Using this expression and equation \eqref{eq:AzeroAdj}, we find that
$A_0^*$ satisfies
$$
  \bbm{0&-\mathrm{grad}\big|^*_{H^1_{0}(\Omega)}\\-\mathrm{div}\big|^*_{H^{\mathrm{div}}_{\Gamma_0}(\Omega)}&0}
  =A_0^* = \bbm{0&\mathrm{div}\big|_{H^{\mathrm{div}}(\Omega)}\\\mathrm{grad}\big|_{H^1_{\Gamma_0}(\Omega)}&0}.
$$
This shows that
$\mathrm{grad}\big|_{H^1_{\Gamma_0}(\Omega)}^*=-\left(\mathrm{div}\big|^*_{H^{\mathrm{div}}_{\Gamma_0}(\Omega)}\right)^*=-\mathrm{div}\big|_{H^{\mathrm{div}}_{\Gamma_0}(\Omega)}$,
since $\mathrm{div}\big|_{H^{\mathrm{div}}_{\Gamma_0}(\Omega)}$ is
closed. Looking at the upper right corners, we find
 $\mathrm{grad}\big|_{H^1_{0}(\Omega)}^*=-\mathrm{div}\big|_{H^{\mathrm{div}}(\Omega)}$; this can also be obtained from the previous equality by taking $\Gamma_0=\partial\Omega$.
\end{proof}

The following theorem gives an example of how the general results in Section \ref{sec:genBT} can be applied to the wave equation.
\begin{theorem}\label{thm:wavesemigroup}
For every accretive $\wtsmash Q_b\in\Lscr(\Wscr;\Wscr')$, the operator $A_{\Hscr}$ in \eqref{eq:A1} generates a contraction semigroup on $\Xscr_{\Hscr}$.
\end{theorem}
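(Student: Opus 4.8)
The plan is to realise $A_\Hscr$ as the operator ``$A$'' of Theorems \ref{thm:maxdissprel} and \ref{thm:JaZw}, built from the boundary triplet of Theorem \ref{thm:divgradBT}, and then to check the hypotheses of Theorem \ref{thm:JaZw}. First I would fix $\Psi_\Wscr:\Wscr'\to\Wscr$ to be the canonical duality isomorphism of the Gelfand triple $\Wscr\subset L^2(\Gamma_1\cup\Gamma_2)\subset\Wscr'$, i.e.\ the unitary determined by $\Ipdp{w}{\Psi_\Wscr\phi}_\Wscr=(w,\phi)_{\Wscr,\Wscr'}$ for $w\in\Wscr$ and $\phi\in\Wscr'$; by Theorem \ref{thm:divgradBT}, $(\Wscr;B_0,\Psi_\Wscr B_\perp)$ is then a boundary triplet for $A_0^*$ on $\Xscr_\Hscr$, and since $A_\Hscr$ is defined in \eqref{eq:A1} without reference to $\Psi_\Wscr$ there is no loss in this choice. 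Next, since $\bbm{\widetilde Q_b\gamma_0&\gamma_\perp}\Hscr x=\widetilde Q_b B_0 x+B_\perp x$ and $\Psi_\Wscr$ is injective, the boundary condition in \eqref{eq:A1} is equivalent to $(\Psi_\Wscr\widetilde Q_b)B_0 x+I_\Wscr(\Psi_\Wscr B_\perp)x=0$, so that (using \eqref{eq:AzeroAdj}) $A_\Hscr=A_0^*\big|_{\dom{A_\Hscr}}$ with $\dom{A_\Hscr}=\Ker{\bbm{W_1&W_2}\sbm{B_0\\\Psi_\Wscr B_\perp}}$, where $\Kscr:=\Wscr$, $W_1:=\Psi_\Wscr\widetilde Q_b\in\Lscr(\Wscr)$ and $W_2:=I_\Wscr$. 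Thus $A_\Hscr$ is exactly the operator to which Theorems \ref{thm:maxdissprel} and \ref{thm:JaZw} apply.

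The main step is then to convert accretivity of $\widetilde Q_b\in\Lscr(\Wscr;\Wscr')$ (that is, $\re(\widetilde Q_b w,w)_{\Wscr',\Wscr}\geq0$ for $w\in\Wscr$) into accretivity of $W_1=\Psi_\Wscr\widetilde Q_b$ as an operator on $\Wscr$. By the characterising property of $\Psi_\Wscr$,
$$
  \re\Ipdp{\Psi_\Wscr\widetilde Q_b w}{w}_\Wscr=\re\,\overline{\Ipdp{w}{\Psi_\Wscr\widetilde Q_b w}_\Wscr}=\re\,\overline{(w,\widetilde Q_b w)_{\Wscr,\Wscr'}}=\re\,(\widetilde Q_b w,w)_{\Wscr',\Wscr}\geq0,\qquad w\in\Wscr,
$$
so $W_1$ is accretive on $\Wscr$, and hence so is $W_1^*$ (because $\re\Ipdp{W_1^*w}{w}_\Wscr=\re\Ipdp{W_1 w}{w}_\Wscr$). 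Consequently $\norm{(W_1+W_2)w}_\Wscr\geq\norm{w}_\Wscr$ for all $w$, and likewise for $(W_1+W_2)^*=W_1^*+I$, so $W_1+W_2=\Psi_\Wscr\widetilde Q_b+I$ is boundedly invertible on $\Wscr$; in particular it is injective and $\range{W_1-W_2}\subset\Wscr=\range{W_1+W_2}$, which is the range condition \eqref{eq:rangecond}. Finally
$$
  W_1W_2^*+W_2W_1^*=\Psi_\Wscr\widetilde Q_b+(\Psi_\Wscr\widetilde Q_b)^*\geq0\quad\text{in }\Wscr,
$$
since $\re\Ipdp{(W_1+W_1^*)w}{w}_\Wscr=2\re\Ipdp{W_1 w}{w}_\Wscr\geq0$; this is precisely \eqref{eq:sigmanevanchar}. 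Hence condition (3) of Theorem \ref{thm:JaZw} holds, so by that theorem $A=A_\Hscr$ generates a contraction semigroup on $\Xscr_\Hscr$.

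I expect the only genuinely delicate point to be the bookkeeping around the pivot structure $\Wscr\subset L^2(\Gamma_1\cup\Gamma_2)\subset\Wscr'$ — in particular choosing $\Psi_\Wscr$ to be the unitary intertwining the inner product of $\Wscr$ with the $(\Wscr,\Wscr')$-pairing, so that ``$\widetilde Q_b$ accretive into the dual space'' really does become ``$\Psi_\Wscr\widetilde Q_b$ accretive on $\Wscr$''; everything else is routine substitution into the abstract theorems. As an alternative one can bypass Theorem \ref{thm:JaZw} and argue directly from Theorem \ref{thm:divgradprops}(4): using that $\sbm{B_0\\B_\perp}:\dom{A_0^*}\to\Wscr\times\Wscr'$ is surjective (Theorems \ref{thm:divgradBT} and \ref{thm:dirichletrange2b}), the relation $\Cscr=\sbm{B_0\\\Psi_\Wscr B_\perp}\dom{A_\Hscr}$ is the graph of $-\Psi_\Wscr\widetilde Q_b\in\Lscr(\Wscr)$, which is dissipative by the computation above; being the graph of a bounded, everywhere-defined, dissipative operator it is maximal dissipative (a proper dissipative extension would contain some $(a,b)$ with $c:=b+\Psi_\Wscr\widetilde Q_b a\neq0$, and then inserting $w=c$ into the dissipativity inequality and letting the scalar parameter tend to $+\infty$ forces $c=0$), so $A_\Hscr$ is maximal dissipative and generates a contraction semigroup.
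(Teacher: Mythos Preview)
Your proof is correct and follows essentially the same route as the paper: identify $W_1=\Psi_\Wscr\wtsmash Q_b$, $W_2=I_\Wscr$, use the duality identity $\Ipdp{\Psi_\Wscr\wtsmash Q_b w}{w}_\Wscr=(\wtsmash Q_b w,w)_{\Wscr',\Wscr}$ to get accretivity of $W_1$ on $\Wscr$, conclude that $W_1+W_2$ is invertible and that \eqref{eq:sigmanevanchar} holds, and apply Theorem \ref{thm:JaZw}. The paper phrases the invertibility step as ``$\Psi_\Wscr\wtsmash Q_b$ is maximal accretive, hence $\Psi_\Wscr\wtsmash Q_b+I$ is invertible'', while you spell out the lower bounds for $W_1+I$ and $(W_1+I)^*$; your added alternative via Theorem \ref{thm:divgradprops}(4) is a valid shortcut not in the paper.
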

\begin{proof}
We use Theorem \ref{thm:JaZw}, and we begin by identifying $W_B$. Since $\Psi_\Wscr$ is injective, 
$$
\begin{aligned}
  \dom A_{\Hscr} &= \set{\bbm{g\\f}\in\Hscr^{-1}\bbm{H^1_{\Gamma_0}(\Omega)\\H^{\mathrm{div}}(\Omega)} \Bigmid \Psi_\Wscr\, \wtsmash Q_b\,\gamma_0\, M_{1/\rho}\, g+\Psi_\Wscr\,\gamma_\perp \, M_T\, f=0} \\
  &= \Ker{\bbm{\Psi_\Wscr \wtsmash Q_b&I_\Wscr}\bbm{B_0\\\Psi_\Wscr\, B_\perp}};
\end{aligned}
$$
hence $W_1=\Psi_\Wscr\, \wtsmash Q_b\big|_\Wscr$ and $W_2=I_\Wscr$ with $\Kscr=\Wscr$. We next verify that these operators satisfy \eqref{eq:rangecond} and \eqref{eq:sigmanevanchar}, starting with the latter. We have for all $k\in\Wscr$ that
\begin{equation}\label{eq:opacc}
  \Ipdp{(W_1 W_2^*+W_2 W_1^*)k}{k}_\Wscr = 
    2\re\Ipdp{\Psi_\Wscr \,\wtsmash Q_b\,k}{k}_\Wscr
  = 2\re\,(\wtsmash Q_bk,k)_{\Wscr',\Wscr}\geq0.
\end{equation}

Since the operator $\Psi_\Wscr \,\wtsmash Q_b$ is defined everywhere, the calculation \eqref{eq:opacc} shows that $\Psi_\Wscr \wtsmash Q_b$ is maximal accretive on $\Wscr$. This implies that $\Psi_\Wscr\, \wtsmash Q_b+I=W_1+W_2$ is invertible in $\Wscr$; hence \eqref{eq:rangecond} holds and $A_{\Hscr}$ generates a contraction semigroup on $X_\Hscr$ by Theorem \ref{thm:JaZw}. 
\end{proof}

It is as usual straightforward to verify directly that $A_{\Hscr}$ is dissipative if $\wtsmash Q_b$ is dissipative, and so we might as well have used part (5) of Theorem \ref{thm:maxdissprel} instead of Theorem \ref{thm:JaZw}. The preceding result should be compared to \cite[\S3.9]{TuWeBook}.

\section{The wave equation as a conservative boundary control system}\label{sec:cons}

In this section we show that, depending on the choices of
  the input and output spaces, we can interpret \eqref{eq:physPDE} as an impedance passive boundary control system in two different ways. First we briefly recall some central concepts on boundary control systems; see e.g. \cite[\S2]{MaSt07} for more details.
\begin{definition}\label{def:bdrnode}
A triple $(L,K,G)$ of operators is an \emph{(internally well-posed) boundary control system} on the triple $(\Uscr,\Xscr,\Yscr)$ of Hilbert spaces if it has the following properties:
\begin{enumerate}
\item The linear operators $L$, $K$ and $G$ have the same domain $\Zscr\subset\Xscr$ and take values in $\Xscr$, $\Yscr$ and $\Uscr$, respectively. The space $\Zscr$ is endowed with the graph norm of $\sbm{L\\K\\G}$ and it is called the \emph{solution space}.
\item The operator $\sbm{L\\K\\G}$ is closed from $\Xscr$ into $\sbm{\Xscr\\\Yscr\\\Uscr}$.
\item The operator $G$ is surjective.
\item The operator $A:=L|_{\Ker G}$ generates a strongly continuous semigroup on $\Xscr$.
\end{enumerate}
The boundary control system is \emph{strong} if $L$ is a closed operator on $\Xscr$.
\end{definition}

As was proved in \cite[Lemma 2.6]{MaSt06}, a boundary control system $(L,K,G)$ on $(\Uscr,\Xscr,\Yscr)$ with solution space $\Zscr$ has the following solvability property: For all initial states $z_0\in\Zscr$ and input signals $u\in C^2(\rplus;\Uscr)$ compatible with $z_0$, i.e., $u(0)=Gz_0$, the following system has a unique state trajectory $z\in C^1(\rplus;\Xscr)\cap C(\rplus;\Zscr)$ and the corresponding output signal satisfies $y\in C(\rplus;\Yscr)$:
$$
  \dot z(t)=Lz(t),\quad u(t)=Gz(t),\quad\text{and}\quad y(t)=Kz(t),\quad t\geq0,\quad z(0)=z_0.
$$
Thus for those initial conditions and inputs, the above differential equation possesses a unique classical solution.

A boundary control system $\Xi:=(L,K,G)$ is called time-flow invertible if the triple $\Xi^\leftarrow:=(-L,G,K)$, the so called time-flow inverse, is also a boundary control system. The following definition is adapted from \cite[\S\S2--3]{MaSt07}:
\begin{definition}\label{def:pasv}
A boundary control system is \emph{scattering passive} if
\begin{equation}\label{eq:pasv}
  2\re\Ipdp{z}{Lz}_\Xscr+\|Kz\|^2_\Yscr\leq\|Gz\|^2_\Uscr,\quad z\in\Zscr,
\end{equation}
which holds if and only if all the classical solutions described above satisfy
$$
  \|x(T)\|^2+\int_0^T\|y(t)\|^2\ud t\leq \|x(0)\|^2+\int_0^T\|u(t)\|^2\ud t,\quad T\geq0.
$$
A boundary control system $\Xi$ is called \emph{scattering energy preserving} if we have equality in \eqref{eq:pasv}, and if in addition $\Xi^\leftarrow$ is also a scattering-energy preserving boundary control system, then $\Xi$ is called \emph{scattering conservative}.

A boundary control system $(L,K,G)$ with input space $\Uscr$ and output space $\Yscr$ is \emph{impedance passive (impedance conservative)} if there exists a unitary operator $\Psi:\Uscr\to\Yscr$, such that the so-called \emph{external Cayley transform} $(L,\wtsmash K,\wtsmash G)$ is a scattering passive (scattering conservative) boundary control system, where
\begin{equation}\label{eq:extcayley}
   \wtsmash K := \frac{\Psi G-K}{\sqrt2}\quad\text{and}\quad\wtsmash G := \frac{\Psi G+K}{\sqrt2}.
\end{equation}
\end{definition}

The following two results formalise \eqref{eq:physPDE} as an impedance
passive boundary control system in two different ways. First we assume
that $\Gamma_1=\emptyset$, i.e., that there is no damping on the boundary.
\begin{corollary}\label{cor:impcons}
Let $\Zscr:=\Hscr^{-1}\sbm{H^1_{\Gamma_0}(\Omega)\\H^{\mathrm{div}}(\Omega)}$, $L:=\sbm{0&\mathrm{div}\\\mathrm{grad}&0}\Hscr$, $K:=B_0$, and $G:=B_\perp$. Then $(L,K,G)$ is an internally well-posed strong impedance conservative boundary control system with state space $\Xscr_{\Hscr}$, input space $\Uscr=\Wscr'$, and output space $\Yscr=\Wscr$. 
\end{corollary}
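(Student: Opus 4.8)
\textbf{Proof plan for Corollary \ref{cor:impcons}.}

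The plan is to verify the four conditions of Definition \ref{def:bdrnode} and then the impedance-conservativity condition via the external Cayley transform. I would start by noting that the solution space $\Zscr=\Hscr^{-1}\sbm{H^1_{\Gamma_0}(\Omega)\\H^{\mathrm{div}}(\Omega)}=\dom{A_0^*}$ (since $\Gamma_1=\emptyset$ makes the boundary damping operator irrelevant and $\wtsmash Q_b=0$), so that $L=A_0^*$ as a map on $\dom{A_0^*}$; in particular $L$ is closed on $\Xscr_\Hscr$ (it is the adjoint of the densely defined operator $A_0$ by Theorem \ref{thm:divgradBT}), giving the ``strong'' property. Condition 1 is immediate: $L$, $K=B_0$, $G=B_\perp$ all have domain $\dom{A_0^*}$, mapping into $\Xscr_\Hscr$, $\Wscr$, $\Wscr'$ respectively. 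Condition 2, closedness of $\sbm{L\\K\\G}$, follows because $K$ and $B_\perp$ are bounded on $(\dom{A_0^*},\text{graph norm})$ (they are components of the boundary triplet maps $B_0$ and $B_\perp=\Psi_\Wscr^*(\Psi_\Wscr B_\perp)$, both bounded by Definition \ref{def:btrip}) together with closedness of $L$. Condition 3, surjectivity of $G=B_\perp:\dom{A_0^*}\to\Wscr'$, follows from the surjectivity of the boundary triplet map $\sbm{B_0\\\Psi_\Wscr B_\perp}$ onto $\Wscr^2$ established in Theorem \ref{thm:divgradBT} (apply the second component and the fact that $\Psi_\Wscr$ is a bijection). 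Condition 4: $A:=L|_{\Ker G}=L|_{\Ker{B_\perp}}$ is exactly $A_\Hscr$ from \eqref{eq:A1} with $\wtsmash Q_b=0$, which generates a contraction (hence strongly continuous) semigroup on $\Xscr_\Hscr$ by Theorem \ref{thm:wavesemigroup}.

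Next I would treat impedance conservativity. Take the unitary operator $\Psi:=\Psi_\Wscr:\Wscr'\to\Wscr$ (the choice of $\Psi$ in Definition \ref{def:pasv} with $\Uscr=\Wscr'$, $\Yscr=\Wscr$). The external Cayley transform is $\wtsmash K=\frac{1}{\sqrt2}(\Psi_\Wscr B_\perp-B_0)$ and $\wtsmash G=\frac{1}{\sqrt2}(\Psi_\Wscr B_\perp+B_0)$. I need to check that $(L,\wtsmash K,\wtsmash G)$ is scattering conservative, i.e.\ that we have equality in \eqref{eq:pasv} and that the time-flow inverse $(-L,\wtsmash G,\wtsmash K)$ is scattering energy preserving as well. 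For the energy balance, compute for $z=\sbm{g\\f}\in\dom{A_0^*}$:
\begin{align*}
  2\re\Ipdp{z}{Lz}_{\Xscr_\Hscr}
  &= \Ipdp{A_0^*z}{z}_{\Xscr_\Hscr}+\Ipdp{z}{A_0^*z}_{\Xscr_\Hscr} \\
  &= \Ipdp{B_0z}{\Psi_\Wscr B_\perp z}_\Wscr+\Ipdp{\Psi_\Wscr B_\perp z}{B_0 z}_\Wscr \\
  &= 2\re\Ipdp{\Psi_\Wscr B_\perp z}{B_0 z}_\Wscr,
\end{align*}
using \eqref{eq:divgradbdrclean}. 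On the other hand, the polarization identity in $\Wscr$ gives
\[
  \|\wtsmash G z\|_\Wscr^2-\|\wtsmash K z\|_\Wscr^2
  = \tfrac12\|\Psi_\Wscr B_\perp z + B_0 z\|_\Wscr^2 - \tfrac12\|\Psi_\Wscr B_\perp z - B_0 z\|_\Wscr^2
  = 2\re\Ipdp{\Psi_\Wscr B_\perp z}{B_0 z}_\Wscr,
\]
so $2\re\Ipdp{z}{Lz}_{\Xscr_\Hscr}+\|\wtsmash K z\|_\Yscr^2=\|\wtsmash G z\|_\Uscr^2$, which is equality in \eqref{eq:pasv}; hence $(L,\wtsmash K,\wtsmash G)$ is scattering energy preserving. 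For the time-flow inverse, the identity $2\re\Ipdp{z}{-Lz}_{\Xscr_\Hscr}+\|\wtsmash G z\|^2=\|\wtsmash K z\|^2$ holds by the same computation with signs interchanged, so it only remains to check that $(-L,\wtsmash G,\wtsmash K)$ is genuinely a boundary control system, i.e.\ that $-L|_{\Ker{\wtsmash K}}$ generates a strongly continuous semigroup. But $\Ker{\wtsmash K}=\Ker{\Psi_\Wscr B_\perp - B_0}=\Ker{\bbm{-I_\Wscr & I_\Wscr}\sbm{B_0\\\Psi_\Wscr B_\perp}}$, and since $-I_\Wscr$ is trivially unitary, part (5) of Theorem \ref{thm:divgradprops} (or Theorem \ref{thm:maxdissprel}(6)) shows $-L|_{\Ker{\wtsmash K}}$ is skew-adjoint and hence generates a unitary group; surjectivity of $\wtsmash K$ onto $\Wscr$ again follows from surjectivity of $\sbm{B_0\\\Psi_\Wscr B_\perp}$ onto $\Wscr^2$, and closedness of $\sbm{-L\\\wtsmash G\\\wtsmash K}$ is as before. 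Therefore $(L,\wtsmash K,\wtsmash G)$ is scattering conservative, so $(L,K,G)$ is impedance conservative.

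The routine verifications (closedness of the stacked operators, boundedness of the boundary maps in the graph norm) are entirely standard; the only point requiring care is making sure that with $\Gamma_1=\emptyset$ the operator $A=L|_{\Ker G}$ really coincides with $A_\Hscr$ of \eqref{eq:A1} so that Theorem \ref{thm:wavesemigroup} applies — this is where the hypothesis $\Gamma_1=\emptyset$ enters, eliminating the $\wtsmash Q_b$ term — and symmetrically that $-L|_{\Ker{\wtsmash K}}$ is covered by the skew-adjoint case of Theorem \ref{thm:divgradprops}(5). I do not anticipate a genuine obstacle; the main conceptual step is simply recognizing that the energy balance \eqref{eq:divgradbdrclean} of the boundary triplet \emph{is} the scattering energy-preservation identity after the Cayley transform, which is why the ``pivoted'' boundary triplet of Theorem \ref{thm:divgradBT} was set up in exactly this form.
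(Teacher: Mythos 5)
Your overall strategy is sound and is genuinely different from the paper's: the paper disposes of this corollary in one line by combining Theorem \ref{thm:divgradBT} with the general result \cite[Thm 5.2]{MaSt07} (a boundary triplet/boundary node yields an impedance conservative boundary control system), whereas you verify Definition \ref{def:bdrnode} and the external Cayley transform from scratch. Conditions 1--4, the identification of $L|_{\Ker G}$ with $A_\Hscr$ for $\wtsmash Q_b=0$, and the energy identity $2\re\Ipdp{z}{Lz}_{\Xscr_\Hscr}=\|\wtsmash Gz\|_\Wscr^2-\|\wtsmash Kz\|_\Wscr^2$ via \eqref{eq:divgradbdrclean} and polarization are all correct, and the last computation is indeed the conceptual heart of the matter.

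There is, however, a genuine error in the last step. You claim that $\Ker{\wtsmash K}=\Ker{\bbm{-I&I}\sbm{B_0\\\Psi_\Wscr B_\perp}}$ falls under the skew-adjoint case of Theorem \ref{thm:divgradprops}(5) because $-I_\Wscr$ is unitary, so that $-L|_{\Ker{\wtsmash K}}$ generates a unitary group. This is false: the relevant relation is $\Cscr=\sbm{B_0\\\Psi_\Wscr B_\perp}\Ker{\wtsmash K}=\set{\sbm{u\\u}\mid u\in\Wscr}$, and no $V$ satisfies $\Ker{\bbm{I+V&I-V}}=\set{\sbm{u\\u}}$ since $\bbm{I+V&I-V}\sbm{u\\u}=2u$; moreover $\sbm{0&I\\I&0}\Cscr^\perp=\set{\sbm{u\\-u}}\neq\Cscr$, so this $\Cscr$ is not of the form required in part (5), and indeed $\re\Ipdp{u}{u}=\|u\|^2$ shows it is accretive, not skew-symmetric. (Equivalently, with $W_1=-I$, $W_2=I$ one gets $W_1W_2^*+W_2W_1^*=-2I$, so Theorem \ref{thm:maxdissprel}(6) is not applicable either.) The correct statement is that $\Cscr$ is maximal \emph{accretive}, so $-L|_{\Ker{\wtsmash K}}$ is maximal dissipative by the last sentence of Theorem \ref{thm:divgradprops} --- or, more directly, apply part (4) with the contraction $V=0$ to the boundary triplet $(\Wscr;B_0,-\Psi_\Wscr B_\perp)$ for $-A_0^*=(-A_0)^*$, whose associated relation is $\Ker{\bbm{I&I}}=\set{\sbm{u\\-u}}$ and is maximal dissipative. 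The same repair is needed for the forward scattering semigroup: you never explicitly verify that $L|_{\Ker{\wtsmash G}}$ generates, which is required for $(L,\wtsmash K,\wtsmash G)$ to be a boundary control system at all; here $\sbm{B_0\\\Psi_\Wscr B_\perp}\Ker{\wtsmash G}=\set{\sbm{u\\-u}}=\Ker{\bbm{I+V&I-V}}$ with $V=0$, again maximal dissipative by Theorem \ref{thm:divgradprops}(4). With these corrections your conclusion stands (the scattering semigroups of a conservative system are contraction semigroups, not unitary groups, so nothing stronger should be expected), and the rest of the argument goes through.
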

\begin{proof}
The system $(L,K,G)$ is an impedance conservative strong boundary control system by Theorem \ref{thm:divgradBT} and \cite[Thm 5.2]{MaSt07}.
\end{proof}

In the case of Corollary \ref{cor:impcons}, the operator $\Psi$ in \eqref{eq:extcayley} converts forces into velocities; here we use $\Psi=\Psi_\Wscr$ in \eqref{eq:dualityopH}. If we instead choose $L^2(\Gamma_2)$ as input and output space, then we can drop the assumption $\Gamma_1=\emptyset$:

\begin{theorem}\label{thm:dampedBCS}
Assume that $\Gamma_k$, $k=0,1,2$, form a splitting of $\partial\Omega$ with thin boundaries and let $L$, $K$, and $G$ be as in Corollary \ref{cor:impcons}. Let $Q_b\in\Lscr\big(L^2(\Gamma_1)\big)$ and define 
\begin{equation}\label{eq:ZqDef}
  \Zscr_Q:=\set{z\in\Hscr^{-1}\bbm{H^1_{\Gamma_0}(\Omega)\\H^{\mathrm{div}}(\Omega)}\bigmid Gz\in L^2(\Gamma_1\cup\Gamma_2)~\wedge~ Q_b\pi_1  K z+ \pi_1 G z=0}
\end{equation}
with the graph norm of $\Xi_Q:=\sbm{L_Q\\K_Q\\G_Q}:=\sbm{L\\\pi_2K\\\pi_2G}\bigg|_{\Zscr_Q}$. 

Then $(L_Q,K_Q,G_Q)$ is an internally well-posed impedance passive boundary control system with state space $\Xscr_\Hscr$ and input/output space $\Uscr_Q=L^2(\Gamma_2)$. This system is impedance conservative if and only if $Q_b$ is skew-adjoint. The system is strong if and only if $\Uscr_Q=\zero$.
\end{theorem}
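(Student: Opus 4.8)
The plan is to verify Definitions \ref{def:bdrnode} and \ref{def:pasv} directly, taking $\Psi:=I_{L^2(\Gamma_2)}$ as the unitary in Definition \ref{def:pasv}; then the external Cayley transform of $\Xi_Q$ is $(L_Q,\wtsmash K_Q,\wtsmash G_Q)$ with $\wtsmash K_Q:=\frac1{\sqrt2}(\pi_2G-\pi_2K)|_{\Zscr_Q}$ and $\wtsmash G_Q:=\frac1{\sqrt2}(\pi_2G+\pi_2K)|_{\Zscr_Q}$, and impedance passivity (conservativity) of $\Xi_Q$ is exactly scattering passivity (conservativity) of this transformed triple. The engine is the energy balance: for $z\in\Zscr_Q\subset\dom{A_0^*}$ we have $L_Qz=A_0^*z$, and the first three lines of \eqref{eq:divgradbdrcalc} give $2\re\Ipdp{z}{L_Qz}_{\Xscr_\Hscr}=2\re\Ipdp{Gz}{Kz}_{L^2(\Gamma_1\cup\Gamma_2)}$, using that $Gz\in L^2(\Gamma_1\cup\Gamma_2)$ so the $\Wscr'$--$\Wscr$ duality reduces to an $L^2$ inner product. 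Splitting $L^2(\Gamma_1\cup\Gamma_2)=L^2(\Gamma_1)\oplus L^2(\Gamma_2)$ and substituting the defining relation $\pi_1Gz=-Q_b\pi_1Kz$ of $\Zscr_Q$, this becomes $-2\re\Ipdp{Q_b\pi_1Kz}{\pi_1Kz}_{L^2(\Gamma_1)}+2\re\Ipdp{\pi_2Gz}{\pi_2Kz}_{L^2(\Gamma_2)}$, whereas $\|\wtsmash G_Qz\|^2-\|\wtsmash K_Qz\|^2=2\re\Ipdp{\pi_2Gz}{\pi_2Kz}_{L^2(\Gamma_2)}$. Since $Q_b$ is accretive the first term is $\le0$, so the scattering passivity inequality \eqref{eq:pasv} holds; it is an equality for all $z\in\Zscr_Q$ iff $\re\Ipdp{Q_bh}{h}=0$ for $h$ in a dense subset of $L^2(\Gamma_1)$, i.e.\ iff $Q_b$ is skew-adjoint. (For the density, realise $h\in C_c^\infty(\Gamma_1)$ as $\pi_1Kz$ by choosing $w_1\in H^1_{\Gamma_0}(\Omega)$ with $\pi_1\gamma_0w_1=h$ and $w_2\in H^{\mathrm{div}}(\Omega)$ with $\gamma_\perp w_2=-Q_bh$, which exists because $\gamma_\perp$ is onto $\Wscr'$ by Theorem \ref{thm:dirichletrange2b}.)

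\textbf{The boundary control system axioms.} I would check these for $\Xi_Q$, for its Cayley transform, and for the time-flow inverse of the latter. Surjectivity of $G_Q$ (resp.\ $\wtsmash G_Q$, $\wtsmash K_Q$): given $u\in L^2(\Gamma_2)$, view a suitable multiple of $u$ as an element of $L^2(\Gamma_1\cup\Gamma_2)\subset\Wscr'$ supported in $\Gamma_2$, take $w_2\in H^{\mathrm{div}}(\Omega)$ with that normal trace (Theorem \ref{thm:dirichletrange2b}) and $w_1=0$; then $z=\Hscr^{-1}\sbm{w_1\\w_2}\in\Zscr_Q$ and $G_Qz=u$ (and likewise for the transformed outputs). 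Closedness of $\sbm{L_Q\\K_Q\\G_Q}$ is completeness of $\Zscr_Q$ in the graph norm of $\Xi_Q$, which dominates the graph norm of $A_0^*$: a Cauchy sequence $z_n$ there converges to some $z\in\dom{A_0^*}$ with $Kz_n\to Kz$ in $\Wscr$ and $Gz_n\to Gz$ in $\Wscr'$, while $\pi_1Gz_n=-Q_b\pi_1Kz_n\to-Q_b\pi_1Kz$ in $L^2(\Gamma_1)$ and $\pi_2Gz_n\to u$ in $L^2(\Gamma_2)$, so $Gz=-Q_b\pi_1Kz+u\in L^2(\Gamma_1\cup\Gamma_2)$ and $z\in\Zscr_Q$. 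The main operator $A_Q=L_Q|_{\Ker{G_Q}}$ is $A_0^*$ restricted to $\{z\in\dom{A_0^*}\mid \wtsmash Q_bKz+Gz=0\}$ with the accretive $\wtsmash Q_b:=\iota Q_b\pi_1\in\Lscr(\Wscr;\Wscr')$ ($\iota$ the inclusion $L^2(\Gamma_1)\hookrightarrow\Wscr'$), i.e.\ it is the operator $A_\Hscr$ of \eqref{eq:A1}; thus it generates a contraction semigroup by Theorem \ref{thm:wavesemigroup}. Equivalently this is Theorem \ref{thm:pivotproj} with $V_1=Q_b\pi_1$, $V_2=I_{L^2(\Gamma_1\cup\Gamma_2)}$, whose hypotheses (1)--(2) both amount to accretivity of $Q_b$; since $\Ker{G_Q}$ is already of the form \eqref{eq:A1}, the ``closure'' in Theorem \ref{thm:pivotproj} coincides here with $A_Q$. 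Replacing $\wtsmash Q_b$ by $\iota(Q_b\pi_1+\pi_2)$ handles the main operator $L_Q|_{\Ker{\wtsmash G_Q}}$ of the Cayley transform in the same way.

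\textbf{Conservativity and strongness.} For impedance conservativity one additionally needs the time-flow inverse $(-L_Q,\wtsmash G_Q,\wtsmash K_Q)$ to be a boundary control system when $Q_b$ is skew-adjoint; its main operator $-L_Q|_{\Ker{\wtsmash K_Q}}$ has domain given by $Q_b\pi_1Kz+\pi_1Gz=0$ on $\Gamma_1$ and $\pi_2Kz-\pi_2Gz=0$ on $\Gamma_2$. I would conjugate by the time-reversal $J:=\sbm{I&0\\0&-I}$, which is unitary on $\Xscr_\Hscr$ (because $\Hscr$ is block diagonal), keeps $\dom{A_0^*}$ and $\Zscr$ invariant, and satisfies $JA_0^*J=-A_0^*$, $B_0J=B_0$, $B_\perp J=-B_\perp$; this turns $-L_Q|_{\Ker{\wtsmash K_Q}}$ into $A_\Hscr$ of \eqref{eq:A1} with $\wtsmash Q_b=\iota((-Q_b)\pi_1+\pi_2)$, accretive since $-Q_b$ is again skew-adjoint, hence a contraction semigroup generator. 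Together with the equality case of the energy balance (valid both for $(L_Q,\wtsmash K_Q,\wtsmash G_Q)$ and for its time-flow inverse precisely when $Q_b$ is skew-adjoint) this gives scattering conservativity, and the converse was noted above. Finally, the system is strong iff $L_Q$ is closed on $\Xscr_\Hscr$: if $\Uscr_Q=\zero$ (i.e.\ $\Gamma_2=\emptyset$) then $\Zscr_Q=\Ker{G_Q}=\dom{A_\Hscr}$ and $L_Q=A_\Hscr$ is closed; if $\Gamma_2$ has positive surface measure then, since $\Wscr'\supsetneq L^2(\Gamma_1\cup\Gamma_2)$ and $\gamma_\perp$ is onto $\Wscr'$, there is $w_2\in H^{\mathrm{div}}(\Omega)$ with $\pi_1\gamma_\perp w_2=0$ but $\pi_2\gamma_\perp w_2\notin L^2(\Gamma_2)$, and approximating $w_2$ in $H^{\mathrm{div}}(\Omega)$ by smooth fields with $L^2$ normal trace still vanishing on $\Gamma_1$ yields a sequence in $\Zscr_Q$ converging in the $A_0^*$-graph norm to a state outside $\Zscr_Q$, so $L_Q$ is not closed.

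\textbf{Main obstacle.} I expect the delicate points to be the two places where the Sobolev trace theory of the appendix is needed quantitatively: density of $\{\pi_1Kz\mid z\in\Zscr_Q\}$ in $L^2(\Gamma_1)$ (for the ``only if'' of the conservativity equivalence) and the existence of an $H^{\mathrm{div}}$-field with normal trace supported in $\Gamma_2$ but not in $L^2(\Gamma_2)$ (for non-strongness), both of which hinge on surjectivity of $\gamma_\perp$ onto $\Wscr'$ (Theorem \ref{thm:dirichletrange2b}) and on $\Wscr$ being strictly smaller than $L^2(\Gamma_1\cup\Gamma_2)$. The rest is bookkeeping, the main subtlety being to keep each of the three relevant main operators visibly of the form \eqref{eq:A1} with an honest accretive boundary operator in $\Lscr(\Wscr;\Wscr')$, so that Theorem \ref{thm:wavesemigroup} (or Theorem \ref{thm:pivotproj}) applies without a separate closure argument.
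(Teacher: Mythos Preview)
Your approach is correct and follows the paper's overall outline: verify scattering passivity (resp.\ conservativity) of the external Cayley transform $(L_Q,\wtsmash K_Q,\wtsmash G_Q)$ by establishing the Green--Lagrange inequality (with equality iff $Q_b$ is skew-adjoint), surjectivity of $\wtsmash G_Q$ and $\wtsmash K_Q$, and contraction-semigroup generation for the relevant main operators.

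The genuine difference lies in the semigroup-generation steps. The paper invokes Theorem~\ref{thm:pivotproj} with explicit $V_1=\sbm{Q_b\pi_1\\\pi_2}$, $V_2=\sbm{\pi_1\\\pi_2}$ and must first prove closedness of $L_Q|_{\Ker{G_Q+K_Q}}$ by a direct limit argument. Your route is more economical: you observe that each of $\Ker{G_Q}$, $\Ker{\wtsmash G_Q}$, and (after conjugation by the unitary $J=\sbm{I&0\\0&-I}$) $\Ker{\wtsmash K_Q}$ is \emph{exactly} $\dom{A_\Hscr}$ of \eqref{eq:A1} for an accretive $\wtsmash Q_b\in\Lscr(\Wscr;\Wscr')$ (namely $\iota Q_b\pi_1$, $\iota(Q_b\pi_1+\pi_2)$, and $\iota((-Q_b)\pi_1+\pi_2)$ respectively), so Theorem~\ref{thm:wavesemigroup} applies directly and closedness comes for free. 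This is a real simplification, and the $J$-conjugation for the time-flow inverse is cleaner than the paper's ``a few changes of signs''.

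Two points to tighten. In the non-strongness argument, writing ``$\pi_1\gamma_\perp w_2=0$'' is informal because $\pi_1$ is defined on $L^2(\Gamma_1\cup\Gamma_2)$, not on $\Wscr'$; the paper's version---pick $\mu_k\in L^2(\Gamma_2)$ converging in $\Wscr'$ to some $\mu\notin L^2(\Gamma_2)$ and lift via a bounded right inverse $\gamma_\perp^{-r}$---makes the approximating sequence manifestly lie in $\Zscr_Q$ without extra trace-theoretic work. For the density of $\{\pi_1Kz:z\in\Zscr_Q\}$ in $L^2(\Gamma_1)$, the paper proves the stronger $K\Zscr_Q=\Wscr$ directly (choose $g$ with $\gamma_0M_{1/\rho}g=w$ and $f$ with $\gamma_\perp M_Tf=-Q_b\pi_1w$), which is quicker than going through $C_c^\infty(\Gamma_1)$.
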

\begin{proof}
By Definition \ref{def:pasv}, it suffices to verify that the external Cayley transform $\left(L_Q,\frac{1}{\sqrt2}(G_Q-K_Q),\frac{1}{\sqrt2}(G_Q+K_Q)\right)$ of $(L_Q,K_Q,G_Q)$ is a scattering passive (conservative) boundary control system. This can, according to \cite[Prop.\ 2.4]{AaLuMa13}, be achieved by establishing the inequality
\begin{equation}\label{eq:GreenLag}
  2\re\Ipdp{L_Qz}{z}_{\Xscr_\Hscr}+\left\|\frac{G_Q-K_Q}{\sqrt2}z\right\|^2_{L^2(\Gamma_2)}
    -\left\|\frac{G_Q+K_Q}{\sqrt2}z\right\|^2_{L^2(\Gamma_2)}\leq0,\quad z\in\Zscr_Q,
\end{equation}
the surjectivity condition $(G_Q+ K_Q)\Zscr_Q=L^2(\Gamma_2)$, and that $L_Q\big|_{\Ker{G_Q+K_Q}}$ generates a contraction semigroup on $\Xscr$. In order to prove conservativity, we additionally need to show that \eqref{eq:GreenLag} holds with equality, that $(G_Q-K_Q)\Zscr_Q=\Xscr$, and that $-L_Q\big|_{\Ker{G_Q-K_Q}}$ generates a contraction semigroup on $\Xscr_{\Hscr}$. We do this in several steps.

\medskip\noindent
\emph{Step 1 ($(G_Q\pm K_Q)\Zscr_Q=L^2(\Gamma_2)$) and \eqref{eq:GreenLag} holds):} Pick a $u\in L^2(\Gamma_2)$ arbitrarily and extend this $u$ by zero on $\Gamma_1$; denote the result by $\wtsmash u$. Then $\wtsmash u\in L^2(\Gamma_1\cup\Gamma_2)$, and by Theorem \ref{thm:dirichletrange2b} we can find an $f\in H^{\mathrm{div}}(\Omega)$, such that $\gamma_\perp  f=\wtsmash u$. Then $\Hscr^{-1}\sbm{0\\f}\in\Zscr_Q$ and $(G_Q\pm K_Q)\Hscr^{-1}\sbm{0\\f}=\pi_2\gamma_\perp  f=u$.

The left-hand side of \eqref{eq:GreenLag} can for every $z\in\Zscr_Q$ be rewritten as
\begin{equation}\label{eq:conscalc}
\begin{aligned}
  2\re\Ipdp{L_Qz}{z}_{\Xscr}-2\re\Ipdp{G_Qz}{K_Qz}_{L^2(\Gamma_2)} = 
  2\re\Ipdp{Lz}{z}_{\Xscr}-2\re\Ipdp{\pi_2Gz}{\pi_2Kz}_{L^2(\Gamma_2)} &= \\
  2\re\Ipdp{Lz}{z}_{\Xscr}-2\re(B_\perp z,B_0 z)_{\Wscr',\Wscr}+2\re\Ipdp{\pi_1Gz}{\pi_1Kz}_{L^2(\Gamma_1)} &= \\
  2\re\Ipdp{Lz}{z}_{\Xscr}-2\re\Ipdp{\Psi_\Wscr B_\perp z}{B_0 z}_\Wscr+2\re\Ipdp{\pi_1Gz}{\pi_1Kz}_{L^2(\Gamma_1)} &= \\
  2\re\Ipdp{\pi_1Gz}{\pi_1Kz}_{L^2(\Gamma_1)}=-2\re\Ipdp{Q_b\pi_1  K z}{\pi_1Kz}_{L^2(\Gamma_1)} &\leq 0,
\end{aligned}
\end{equation}
where we used \eqref{eq:divgradbdrclean}, \eqref{eq:ZqDef}, and that $Q_b$ is accretive on $L^2(\Gamma_1)$. 

\medskip\noindent
\emph{Step 2 ($L_Q\big|_{\Ker{G_Q+K_Q}}$ generates a contraction semigroup):} We use Theorem \ref{thm:pivotproj} and start by verifying that $L_Q\big|_{\Ker{G_Q+K_Q}}$ is a closed operator on $\Xscr_{\Hscr}$. Let therefore $z_k\in\Ker{G_Q+K_Q}$ tend to $z$ in $\Xscr_{\Hscr}$, so that $\Hscr z_k\to\Hscr z$ in $L^2(\Omega)^{n+1}$. Let moreover $L_Q z_k=\sbm{0&\mathrm{div}\\\mathrm{grad}&0}\Hscr z_k\to v$ in $\Xscr_\Hscr$, hence in $L^2(\Omega)^{n+1}$. By the closedness of $\sbm{0&\mathrm{div}\\\mathrm{grad}&0}$, we have $\Hscr z_k\to \Hscr z$ in $\sbm{H^1(\Omega)\\H^{\mathrm{div}}(\Omega)}$ and $v=\sbm{0&\mathrm{div}\\\mathrm{grad}&0}\Hscr z$. Since $\sbm{K\\G}\Hscr^{-1}$ is bounded from $\sbm{H^1(\Omega)\\H^{\mathrm{div}}(\Omega)}$ into $\sbm{\Wscr\\\Wscr'}$, we have $\sbm{K\\G} z_k\to\sbm{K\\G} z$ in $\sbm{\Wscr\\\Wscr'}$. On the other hand, because $z_k\in \Ker{G_Q+K_Q}$, 
$$
  Gz=\lim_{k\to\infty} (\pi_1+\pi_2)Gz_k=\lim_{k\to\infty} -Q_b\pi_1Kz_k-\pi_2Kz_k=-(Q_b\pi_1+\pi_2)Kz,
$$
where the limits are taken in $\Wscr'$. This shows that $Gz\in L^2(\Gamma_1\cup\Gamma_2)$, $\pi_1Gz+Q_b\pi_1Kz=0$, and $G_Qz+K_Qz=0$. Thus, $z\in\Ker{G_Q+K_Q}$ and $L_Qz=v$, i.e., $L_Q$ is closed.

Furthermore, we have
\begin{equation}\label{eq:BCKer}
  \Ker{G_Q+K_Q}=\Ker{\bbm{Q_b\pi_1&\pi_1\\\pi_2&\pi_2}\bbm{B_0\\B_\perp}}
\end{equation}
and this space equals $\Ascr$ in \eqref{eq:AscrDef} with $A_0^*=L$, $B_1=K$, $b_2=G$, $\Bscr_0=L^2(\Gamma_1\cup\Gamma_2)$, $V_1=\sbm{Q_b\pi_1\\\pi_2}$, $V_2=\sbm{\pi_1\\\pi_2}$, and $\Kscr=\sbm{L^2(\Gamma_1)\\L^2(\Gamma_2)}$. By Theorem \ref{thm:pivotproj}, it is sufficient to show that $\Ker{\bbm{V_1&V_2}}$ is a dissipative relation in $L^2(\Gamma_1\cup\Gamma_2)$ and that $V_1V_2^*+V_2V_1^*\geq0$.

The following verifies that $\sbm{u\\v}\in\Ker{\bbm{V_1&V_2}}\implies \re\Ipdp{u}{v}_{L^2(\Gamma_1\cup\Gamma_2)}\leq0$:
$$	
\begin{aligned}
	\bbm{Q_b\pi_1&\pi_1\\\pi_2&\pi_2}\bbm{u\\v}=0\quad
	&\implies\quad \bbm{\pi_1\\\pi_2}v=-\bbm{Q_b\pi_1\\\pi_2}u \quad\implies\\
	\quad \re\Ipdp{\bbm{\pi_1\\\pi_2}u}{\bbm{\pi_1\\\pi_2}v}&=
	-\re\Ipdp{\bbm{\pi_1u\\\pi_2u}}{\bbm{Q_b\pi_1u\\\pi_2u}}\leq0.
\end{aligned}
$$
Moreover, $V_2^*=\bbm{\Iscr_1&\Iscr_2}:\Kscr\to L^2(\Gamma_1\cup\Gamma_2)$, where $\Iscr_k$ is the appropriate injection, and hence for all $r\in L^2(\Gamma_1)$, $s\in L^2(\Gamma_2)$:
$$
	\re\Ipdp{V_1V_2^*\bbm{r\\s}}{\bbm{r\\s}} = 
	\re\Ipdp{\bbm{Q_b\pi_1(r+s)\\\pi_2(r+s)}}{\bbm{r\\s}}
	=\re\Ipdp{\bbm{Q_br\\s}}{\bbm{r\\s}}\geq0.
$$
Theorem \ref{thm:pivotproj} now completes step 2.

\medskip\noindent
\emph{Step 3 ($\Xi_Q$ is impedance conservative iff $Q_b^*=-Q_b$):} First assume that $\Xi_Q$ is impedance conservative; then \eqref{eq:conscalc} holds with equality. If we can establish that $K\Zscr_Q=\Wscr$, then $\pi_1K\Zscr_Q$ is dense in $L^2(\Gamma_1)$, and it follows from the last equality in \eqref{eq:conscalc} and the boundedness of $Q_b$ that $Q_b^*=-Q_b$. We pick $w\in\Wscr$ arbitrarily and choose $g\in M_\rho H^1_{\Gamma_0}(\Omega)$ such that $\gamma_0 M_{1/\rho}g=w$. Further choosing $f\in M_T^{-1} H^{\mathrm{div}}(\Omega)$, such that $\gamma_\perp M_T f=Q_b \pi_1w$, we obtain $\sbm{g\\f}\in\Zscr_Q$ and $K\sbm{g\\f}=\gamma_0 M_{1/\rho}g=w$.

Now conversely assume that $Q_b^*=-Q_b$. Then we have equality in \eqref{eq:conscalc} and the argument in step 2 (with a few changes of signs) shows that $-L_Q\big|_{\Ker{G_Q-K_Q}}$ generates a contraction semigroup on $\Xscr_{\Hscr}$.

\medskip\noindent
\emph{Step 4 (The remaining claims):} Internal well-posedness, i.e., that $L_Q\big|_{\Ker{G_Q}}$ generates a contraction semigroup, is proved using exactly the same argument as in Step 2, but with $V_1=\sbm{Q_b\pi_1\\0}$, since 
$$
  \Ker{G_Q}=\Ker{\bbm{Q_b\pi_1&\pi_1\\0&\pi_2}}.
$$

It remains to show that $\Xi_Q$ is strong if and only if\ $\Uscr_Q=\zero$. If  $\Uscr_Q=\zero$, then $\sbm{L_Q\\0\\0}=\Xi_Q$, which is a boundary control system by the above, hence closed by Definition \ref{def:bdrnode}; then $L_Q$ is a closed operator. If $\Uscr_Q\neq\zero$, then we can choose a $\mu\in\overline{L^2(\Gamma_2)}\setminus L^2(\Gamma_2)$, where the closure is taken in $\Wscr'$, and pick a sequence $\mu_k\in L^2(\Gamma_2)$ such that $\mu_k\to\mu$ in $\Wscr'$. Next we define a sequence in $\Zscr_Q$ that converges in the graph norm of $L_Q$ by setting $\sbm{g_k\\f_k}:=\Hscr^{-1}\sbm{0\\R\mu_k}$, where $\gamma_\perp^{-r}$ is any bounded right inverse of $\gamma_\perp $. The limit $\Hscr^{-1}\sbm{0\\\gamma_\perp^{-r}\mu}$ of this sequence has the property $G\Hscr^{-1}\sbm{0\\\gamma_\perp^{-r}\mu}=\mu\not\in L^2(\Gamma_1\cup\Gamma_2)$, hence $\sbm{0\\\gamma_\perp^{-r}\mu}\not\in\Zscr_Q$ and so $L_Q$ is not closed.
\end{proof}

The following result connects the classical solutions of \eqref{eq:physPDE} to those of $(L_Q,K_Q,G_Q)$:

\begin{theorem}\label{thm:solution}
Let $u\in C^2\big(\rplus;L^2(\Gamma_2)\big)$ and $z_0,w_0\in L^2(\Omega)$ be such that $\sbm{M_\rho w_0\\\Grad z_0}\in\Zscr_Q$ and $G_Q\sbm{M_\rho w_0\\\Grad z_0}=u(0)$. Then the unique classical solution $\sbm{g\\f}$ of
\begin{equation}\label{eq:impsys}
  \frac{\mathrm d}{\mathrm dt}\bbm{g(t)\\f(t)}=L_Q \bbm{g(t)\\f(t)},\quad 
  G_Q\bbm{g(t)\\f(t)}=u(t), \quad t\geq0, \quad
  \bbm{g(0)\\f(0)}=\bbm{M_\rho w_0\\\Grad z_0},
\end{equation}
\begin{equation}\label{eq:relaxedsmooth}
\begin{aligned}
  \text{satisfies}\quad M_{1/\rho}\, g &\in C\big(\rplus;H^1_{\Gamma_0}(\Omega)\big)\cap C^1\big(\rplus;L^2(\Omega)\big)\quad\text{and}\\ 
  M_T\,f &\in C\big(\rplus;H^{\mathrm{div}}(\Omega)\big)\cap C^1\big(\rplus;L^2(\Omega)^n\big);
\end{aligned}
\end{equation}
in particular $\Grad (M_{1/\rho}\, g) \in C\big(\rplus;L^2(\Omega)^n\big)$. Defining
$$
  y(t):=K_Q \bbm{g(t)\\f(t)}\quad\text{and}\quad z(\xi,t):=z_0(\xi)+\int_0^t \frac{g(\xi,s)}{\rho(\xi)}\ud s,\quad \xi\in\Omega,~t\geq0,
$$
we obtain that $y\in C\big(\rplus;L^2(\Gamma_2)\big)$,
\begin{equation}\label{eq:smoothnessintsol}
\begin{aligned}
  z &\in C^1\big(\rplus;H^1_{\Gamma_0}(\Omega)\big)\cap C^2\big(\rplus;L^2(\Omega)\big)\quad\text{and} \\
  & \Div \big(M_T\,\Grad z(\cdot)\big)\in C\big(\rplus;L^2(\Omega)\big),
\end{aligned}
\end{equation}
and that $(z,y)$ solves \eqref{eq:physPDE} with $Q_i=0$.
Note that by Proposition \ref{lem:dirichletrange} we 
interpret the boundary mapping, $\nu\cdot\big(T(\cdot)\,\Grad
z(\cdot,t)\big)$, as $\gamma_\perp \big(M_T\,\Grad z(t)\big)$.
\end{theorem}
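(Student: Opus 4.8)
The strategy is to derive everything from the abstract solvability theory for the boundary control system $(L_Q,K_Q,G_Q)$ of Theorem~\ref{thm:dampedBCS}, and then to convert the abstract regularity and the abstract boundary relations into the concrete Sobolev-space statements and the six lines of \eqref{eq:physPDE}. First I would apply \cite[Lemma 2.6]{MaSt06} to $(L_Q,K_Q,G_Q)$, which by Theorem~\ref{thm:dampedBCS} is an internally well-posed boundary control system on $(\Uscr_Q,\Xscr_\Hscr,\Uscr_Q)$ with $\Uscr_Q=L^2(\Gamma_2)$ and solution space $\Zscr_Q$. The hypotheses say precisely that the initial state $\sbm{M_\rho w_0\\\Grad z_0}$ lies in $\Zscr_Q$ and is compatible with the input, $u(0)=G_Q\sbm{M_\rho w_0\\\Grad z_0}$; hence \eqref{eq:impsys} has a unique classical solution $\sbm{g\\f}\in C^1(\rplus;\Xscr_\Hscr)\cap C(\rplus;\Zscr_Q)$ and the output $y:=K_Q\sbm{g\\f}$ lies in $C(\rplus;\Uscr_Q)=C\bigl(\rplus;L^2(\Gamma_2)\bigr)$.

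Next I would unwrap the regularity. On $\Zscr_Q$ the graph norm of $\Xi_Q$ dominates the $L^2(\Omega)^{n+1}$-norms of $\Grad(M_{1/\rho}\,g)$ and $\Div(M_T\,f)$, since these are exactly the two components of $L_Q\sbm{g\\f}=\sbm{0&\Div\\\Grad&0}\Hscr\sbm{g\\f}$; because moreover $\sbm{g(t)\\f(t)}\in\Hscr^{-1}\sbm{H^1_{\Gamma_0}(\Omega)\\H^{\mathrm{div}}(\Omega)}$ for each $t$ and $H^1_{\Gamma_0}(\Omega)$ is closed in $H^1(\Omega)$, the containment $\sbm{g\\f}\in C(\rplus;\Zscr_Q)$ yields $M_{1/\rho}\,g\in C\bigl(\rplus;H^1_{\Gamma_0}(\Omega)\bigr)$ and $M_T\,f\in C\bigl(\rplus;H^{\mathrm{div}}(\Omega)\bigr)$. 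From $\sbm{g\\f}\in C^1(\rplus;\Xscr_\Hscr)=C^1(\rplus;L^2(\Omega)^{n+1})$ and boundedness of $M_{1/\rho}$, $M_T$ one gets $M_{1/\rho}\,g\in C^1\bigl(\rplus;L^2(\Omega)\bigr)$ and $M_T\,f\in C^1\bigl(\rplus;L^2(\Omega)^n\bigr)$, which is \eqref{eq:relaxedsmooth}; boundedness of $\Grad:H^1(\Omega)\to L^2(\Omega)^n$ gives the parenthetical remark.

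Now set $z(t):=z_0+\int_0^t M_{1/\rho}\,g(s)\ud s$. Since $M_{1/\rho}\,g$ is continuous with values in $H^1_{\Gamma_0}(\Omega)$, this Bochner integral gives $z\in C^1\bigl(\rplus;H^1_{\Gamma_0}(\Omega)\bigr)$ (reading the hypotheses so that $z_0\in H^1_{\Gamma_0}(\Omega)$) with $\dot z(t)=M_{1/\rho}\,g(t)$, i.e.\ $g(t)=M_\rho\dot z(t)$; differentiating once more and using $g\in C^1(\rplus;L^2(\Omega))$ gives $z\in C^2(\rplus;L^2(\Omega))$ and $\dot g(t)=M_\rho\ddot z(t)$. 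The crucial step is the identification $f(t)=\Grad z(t)$ for all $t$: the second row of \eqref{eq:impsys} reads $\dot f(t)=\Grad(M_{1/\rho}\,g(t))=\Grad\dot z(t)=\tfrac{\mathrm d}{\mathrm dt}\Grad z(t)$ (by boundedness of $\Grad$ and $z\in C^1(\rplus;H^1)$), so $f(t)-\Grad z(t)$ is constant in $L^2(\Omega)^n$ and equals $f(0)-\Grad z_0=0$. Feeding $f=\Grad z$ into the first row of \eqref{eq:impsys} gives $M_\rho\ddot z(t)=\dot g(t)=\Div\bigl(M_T\,\Grad z(t)\bigr)$, the first line of \eqref{eq:physPDE} with $Q_i=0$, and since $\dot g\in C(\rplus;L^2(\Omega))$ also $\Div(M_T\,\Grad z)\in C(\rplus;L^2(\Omega))$, the second line of \eqref{eq:smoothnessintsol}. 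The remaining lines are then read off from $\sbm{g(t)\\f(t)}\in\Zscr_Q$ and the definitions of $K,G,K_Q,G_Q$: $M_{1/\rho}\,g(t)=\dot z(t)\in H^1_{\Gamma_0}(\Omega)$ gives $\dot z=0$ on $\Gamma_0\times\rplus$; the constraint $Q_b\pi_1 K\sbm{g\\f}+\pi_1 G\sbm{g\\f}=0$ in \eqref{eq:ZqDef}, with $K\sbm{g\\f}=\gamma_0 M_{1/\rho}\,g=\gamma_0\dot z$ and $G\sbm{g\\f}=\gamma_\perp M_T\,f=\gamma_\perp(M_T\,\Grad z)$, gives the $\Gamma_1$ line; $u(t)=\pi_2\gamma_\perp(M_T\,\Grad z(t))$ and $y(t)=\pi_2\gamma_0\dot z(t)$ give the two $\Gamma_2$ lines; and $z(\cdot,0)=z_0$, $\dot z(0)=M_{1/\rho}\,g(0)=M_{1/\rho}M_\rho w_0=w_0$ give the initial conditions.

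I expect the main obstacle to be bookkeeping rather than depth: verifying that the graph norm of $\Xi_Q$ really controls the Sobolev norms (so that the abstract continuity of $t\mapsto\sbm{g(t)\\f(t)}$ upgrades to the stated $H^1_{\Gamma_0}$- and $H^{\mathrm{div}}$-valued continuity of $M_{1/\rho}\,g$ and $M_T\,f$), keeping track of which rough multiplication operator moves which smoothness, and — the one genuinely non-automatic point — justifying the pointwise identity $f(t)=\Grad z(t)$ from the second row of \eqref{eq:impsys} together with the initial datum $f(0)=\Grad z_0$, rather than merely knowing $f(t)\in L^2(\Omega)^n$.
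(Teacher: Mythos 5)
Your proposal is correct and follows essentially the same route as the paper's proof: invoke the classical solvability of the boundary control system from \cite[Lemma 2.6]{MaSt06}, read off \eqref{eq:relaxedsmooth} from $\sbm{g\\f}\in C^1(\rplus;\Xscr_\Hscr)\cap C(\rplus;\Zscr_Q)$, define $z$ by integration, identify $f(t)=\Grad z(t)$ via the second row of \eqref{eq:impsys} together with $f(0)=\Grad z_0$, and then substitute into the first row to obtain the PDE and \eqref{eq:smoothnessintsol}. The only differences are presentational: you spell out the verification of the boundary lines of \eqref{eq:physPDE} and the implicit reading $z_0\in H^1_{\Gamma_0}(\Omega)$, which the paper leaves tacit.
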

\begin{proof}
By the standard smoothness property of the state trajectory of a boundary control system mentioned after Definition \ref{def:bdrnode}, $\sbm{g\\f}\in C^1(\rplus;\Xscr_\Hscr)\cap C(\rplus;\Zscr_Q)$, and in particular $\Hscr\sbm{g\\f}\in C\left(\rplus;\sbm{H^1_{\Gamma_0}(\Omega)\\H^{\mathrm{div}}(\Omega)}\right)$, i.e, \eqref{eq:relaxedsmooth} holds. We have 
$$
  y=K_Q \bbm{g\\f}=\pi_2\gamma_0\,M_{1/\rho}\,g
$$
which is in $C\big(\rplus;L^2(\Gamma_2)\big)$ by \eqref{eq:relaxedsmooth} and the continuity of $\pi_2\gamma_0:H^1(\Omega)\to L^2(\Gamma_2)$.

From the definition of $z$, it follows immediately that $\dot z(t)=M_{1/\rho}\,g(t)$ and that (using \eqref{eq:impsys})
$$
  \Grad z(t)=\Grad z_0+\int_0^t\Grad \big(M_{1/\rho}\, g(s)\big)\ud s=\Grad z_0+\int_0^t\dot f(s)\ud s=f(t).
$$
This implies that
$$
  M_\rho\,\ddot z(t)=\dot g(t)=\Div \big(M_T\,f(t)\big)=\Div \big(M_T\,\Grad z(t)\big),
$$
and so $\Div \big(M_T\,\Grad z(\cdot)\big)\in C\big(\rplus;L^2(\Omega)\big)$ since $z\in C^2\big(\rplus;L^2(\Omega)\big)$, which proves \eqref{eq:smoothnessintsol}. Moreover, $(y,z)$ solves \eqref{eq:physPDE} with $Q_i=0$.
\end{proof}

\begin{remark}
By Definition \ref{def:pasv} and Theorem \ref{thm:dampedBCS}, the system
\begin{equation}\label{eq:physPDEscatt}
  \left\{
    \begin{aligned}
        \rho(\xi)\frac{\partial^2 z}{\partial t^2} (\xi,t) &= 
	  \Div\big(T(\xi)\,\Grad z(\xi,t)\big)-\left(Q_i\frac{\partial z}{\partial t}\right)(\xi,t),
	  \quad \xi\in\Omega,~t\geq0,\\
    0 &= \frac{\partial z}{\partial t}(\xi,t)\quad\text{on}~\Gamma_0\times\rplus,\\
    0 &= \nu\cdot\big(T(\xi)\,\Grad z(\xi,t)\big)+\left(Q_b\frac{\partial z}{\partial t}\right)	(\xi,t)\quad\text{on}~\Gamma_1\times\rplus,\\
    \sqrt 2 u(\xi,t) &= \nu\cdot\big(T(\xi)\,\Grad z(\xi,t)\big)+\frac{\partial z}{\partial t}(\xi,t)\quad\text{on}~\Gamma_2\times\rplus,\\
    \sqrt 2 y(\xi,t) &= \nu\cdot\big(T(\xi)\,\Grad z(\xi,t)\big)-\frac{\partial z}{\partial t}(\xi,t)\quad\text{on}~\Gamma_2\times\rplus, \\
    z(\xi,0) &= z_0(\xi),\quad \frac{\partial z}{\partial t} (\xi,0) = w_0(\xi)\quad \text{on}~\Omega,
    \end{aligned}\right.
\end{equation}
is a scattering-passive boundary control system with state $\sbm{\rho(\cdot)\dot z\\\Grad z}$, input $u$, and output $y$, and in particular it is $L^2$-well-posed. The state space is $\Xscr_\Hscr$ and the input/output space is $L^2(\Gamma_2)$. The system \eqref{eq:physPDEscatt} is even scattering conservative if $Q_i=0$ and $Q_b^*=-Q_b$. The statements in Theorem \ref{thm:solution} remain true for the scattering representation if one replaces all occurrences of $G_Q$ and $K_Q$ by $\frac{1}{\sqrt2}(G_Q+K_Q)$ and $\frac{1}{\sqrt2}(G_Q-K_Q)$, respectively. The pair $(z,y)$ then solves \eqref{eq:physPDEscatt} with $Q_i=0$ instead of \eqref{eq:physPDE}.
\end{remark}

The scattering-passive system \eqref{eq:physPDEscatt} fits into the
abstract framework developed for Maxwell's equations in
\cite{StWe12,StWe13}, at least in the case $\Gamma_1=\emptyset$, i.e.,
when there is no damping at the boundary. In a
  forthcoming paper, we shall give more details on this.

\def\cprime{$'$} \def\cprime{$'$}
\providecommand{\bysame}{\leavevmode\hbox to3em{\hrulefill}\thinspace}
\providecommand{\MR}{\relax\ifhmode\unskip\space\fi MR }
\providecommand{\MRhref}[2]{%
  \href{http://www.ams.org/mathscinet-getitem?mr=#1}{#2}
}
\providecommand{\href}[2]{#2}

\appendix

\section{Sobolev-space background}\label{sec:sobolev}

The necessary background for the present article has been compiled in \cite{parahyp_report}. Here we only fix the notation briefly and the reader is referred to \cite{parahyp_report} for more details. We mainly cite \cite{TuWeBook} for convenience; references to the standard sources, such as \cite{SpiBook,NecasBook,AdFuBook,GrisBook,LiMaBook1}, can be found there. To the best of our knowledge, Proposition \ref{prop:dirichletrange2a} and Theorems \ref{thm:partintext2}--\ref{thm:dirichletrange2b} are new for the case $\Wscr'\neq H^{-1/2}(\partial\Omega)$, i.e., for $\Gamma_0$ with positive surface measure.

A \emph{bounded Lipschitz set} is a bounded and open subset $\Omega$ of $\R^n$ which has a Lipschitz-continuous boundary; see \cite[\S 13]{TuWeBook}. By $\Dscr(\Omega)$ we mean the space of test functions on $\Omega$, i.e., functions in $C^\infty(\Omega)$ with compact support contained in $\Omega$, and $\Dscr'(\Omega)$ denotes the set of distributions on $\Omega$. 

\begin{definition}
The \emph{divergence operator} is the operator $\mathrm{div}:\Dscr'(\Omega)^n\to\Dscr'(\Omega)$ given by
$$
  \Div v = \frac{\partial v_1}{\partial x_1} + \ldots + \frac{\partial v_n}{\partial x_n},
$$
and the \emph{gradient operator} is the operator $\mathrm{grad}:\Dscr'(\Omega)\to\Dscr'(\Omega)^n$ defined by
$$
  \Grad w = \left(\frac{\partial w}{\partial x_1}, \ldots, \frac{\partial w}{\partial x_n}\right)^\top.
$$
The \emph{Laplacian} is defined as a linear operator on $\Dscr'(\Omega)$ by $\Delta x:=\Div{(\Grad x)}$.
\end{definition}

The Sobolev space $H^1(\Omega)$ is as usual defined as the space 
$$
  H^1(\Omega):=\set{v\in L^2(\Omega)\mid \Grad v\in L^2(\Omega)^n}
$$ 
equipped with the graph norm of $\mathrm{grad}$. Similarly, we define
$$
  H^{\mathrm{div}}(\Omega):=\set{v\in L^2(\Omega)^n\mid \Div v\in L^2(\Omega)},
$$
equipped with the graph norm of $\mathrm{div}$. These are the maximal domains for which $\mathrm{grad}$ and $\mathrm{div}$ can be considered as operators between $L^2$ spaces.

\begin{definition}\label{def:Hzero}
The closure of $\Dscr(\Omega)$ in $H^1(\Omega)$ is denoted by $H^1_0(\Omega)$ and the closure of $\Dscr(\Omega)^n$ in $H^{\mathrm{div}}(\Omega)$ is denoted by $H^{\mathrm{div}}_0(\Omega)$.
\end{definition}

It is easy to see that $H^1(\Omega)^n\subset H^{\mathrm{div}}(\Omega)\subset L^2(\Omega)^n$ with continuous embeddings. It is well known that $\Dscr(\overline\Omega)^n$, the restrictions to the closure of $\Omega$ of all functions in $C^\infty(\R^n)$, is dense in $L^2(\Omega)$; see e.g.\ \cite[Thm I.1.2.1]{GiRaBook}. Hence, $H^{\mathrm{div}}(\Omega)$ is dense in $L^2(\Omega)^n$, and due to the following lemma the other embedding is also dense:

\begin{lemma}\label{lem:TestDenseHdiv}
Let $\Omega$ be a subset of $\R^n$ with Lipschitz-continuous boundary. Then $\Dscr(\overline\Omega)^n$ is dense in $H^{\mathrm{div}}(\Omega)$. It follows that also $H^1(\Omega)^n$ is dense in $H^{\mathrm{div}}(\Omega)$.
\end{lemma}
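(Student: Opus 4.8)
The second assertion follows at once from the first: since $\Omega$ is bounded, every component of an element of $\Dscr(\overline\Omega)^n$ lies in $H^1(\Omega)$, so $\Dscr(\overline\Omega)^n\subset H^1(\Omega)^n\subset H^{\mathrm{div}}(\Omega)$, and a set dense in the largest of these spaces is dense in the middle one. The plan is therefore to prove only that $\Dscr(\overline\Omega)^n$ is dense in $H^{\mathrm{div}}(\Omega)$. This is a classical fact, and one could simply cite \cite[Thm I.2.4]{GiRaBook}; I would nonetheless carry out the standard localisation--translation--mollification argument, since the Lipschitz hypothesis enters it in a clean and essential way.

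First I would cover $\overline\Omega$ by finitely many open sets $\Oscr_0,\Oscr_1,\dots,\Oscr_N$ with $\overline{\Oscr_0}\subset\Omega$, where for each $j\ge1$ the set $\Oscr_j$ is a boundary chart in which $\partial\Omega$ is a Lipschitz graph and which possesses a fixed direction $d_j\in\R^n$ having the segment property: $x+td_j\in\Omega$ whenever $x\in\overline\Omega\cap\Oscr_j$ and $0<t\le\varepsilon_j$ (this is exactly what a Lipschitz boundary supplies locally). Choosing a smooth partition of unity $\chi_0,\dots,\chi_N$ subordinate to this cover, any $u\in H^{\mathrm{div}}(\Omega)$ splits as $u=\sum_{j=0}^N\chi_j u$, where $\chi_j u\in H^{\mathrm{div}}(\Omega)$ because $\Div(\chi_j u)=\chi_j\Div u+\Grad\chi_j\cdot u\in L^2(\Omega)$, and $\supp(\chi_j u)\subset\overline\Omega\cap\Oscr_j$. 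For $j=0$ the term $\chi_0 u$ has compact support in $\Omega$, so its mollifications lie in $\Dscr(\Omega)^n\subset\Dscr(\overline\Omega)^n$ and converge to it in $H^{\mathrm{div}}(\Omega)$.

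For a boundary term $v:=\chi_j u$ with $j\ge1$, I would extend $v$ by zero to a function on $\R^n$ and, for small $t>0$, consider the function $w_t$ obtained by translating this zero-extension by $td_j$ and restricting to $\Omega$. The crucial point is that $w_t\in H^{\mathrm{div}}(\Omega)$ with divergence equal (up to the same translation and restriction) to $\widetilde{\Div v}$, the zero-extension of $\Div v$, and with no contribution concentrated on $\partial\Omega$. Indeed, the distributional divergence of the zero-extension of $v$ agrees with $\widetilde{\Div v}$ on $\Omega$ and on $\R^n\setminus\overline\Omega$, hence differs from it only by a distribution supported in $\partial\Omega$; translating by $td_j$ carries that distribution onto $\partial\Omega-td_j$, which---precisely because of the segment property, as $y\in\partial\Omega\cap\Oscr_j$ with $y-td_j\in\overline\Omega$ would force $y=(y-td_j)+td_j\in\Omega$---is disjoint from $\overline\Omega$ in the part of $\Oscr_j$ where $w_t$ lives, for $t$ small. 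Continuity of translation in $L^2$ then gives $w_t\to v$ in $H^{\mathrm{div}}(\Omega)$ as $t\to0^+$. Finally I would mollify $w_t$ with a kernel of radius $\delta\ll t$: mollification commutes with $\Div$, the singular surface part of the divergence stays at positive distance from the relevant part of $\overline\Omega$, so the mollified functions lie in $C^\infty(\R^n)^n$, restrict to elements of $\Dscr(\overline\Omega)^n$, and converge to $w_t$ in $H^{\mathrm{div}}(\Omega)$ as $\delta\to0^+$. A diagonal choice of $t$ and then $\delta$, combined over $j$, approximates $u$.

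The main obstacle is the boundary step---verifying that the inwardly translated zero-extension has its divergence in $L^2(\Omega)$, with the jump distribution on $\partial\Omega$ pushed off $\overline\Omega$---which is where the Lipschitz (segment) property is genuinely used; the interior term, the commutation of mollification with $\Div$, and the partition-of-unity bookkeeping are routine.
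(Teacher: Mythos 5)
Your argument is correct, but note that the paper itself offers no proof of this lemma: it simply cites \cite[Thm I.2.4]{GiRaBook}, and the proof given there is a duality argument rather than the constructive one you carry out. In Girault--Raviart one takes a continuous functional $\ell(v)=\int_\Omega\bigl(g\cdot v+\phi\,\Div v\bigr)\ud x$ vanishing on $\Dscr(\overline\Omega)^n$, tests with restrictions of $\Dscr(\R^n)^n$ to deduce $\Grad\tilde\phi=\tilde g$ for the zero-extensions, concludes $\tilde\phi\in H^1(\R^n)$ and hence (using the Lipschitz boundary) $\phi\in H^1_0(\Omega)$, and then shows $\ell\equiv0$ on all of $H^{\mathrm{div}}(\Omega)$ by approximating $\phi$ from $\Dscr(\Omega)$. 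Your localisation--translation--mollification scheme is the other standard route: it is longer on chart and partition-of-unity bookkeeping but is self-contained and produces explicit smooth approximants, whereas the duality proof is shorter but hides the Lipschitz hypothesis inside the characterisation of $H^1_0(\Omega)$ by zero-extension. The substantive steps in your version all check out: the product rule gives $\chi_j u\in H^{\mathrm{div}}(\Omega)$; the difference $\Div\tilde v-\widetilde{\Div v}$ is a compactly supported distribution carried by $\partial\Omega\cap\supp\chi_j$, and your verification via the segment property (available for a Lipschitz graph with $d_j$ the vertical direction) correctly shows that the translated singular support is disjoint from the closed set $\overline\Omega$, hence at positive distance from it, so that $\Div w_t\in L^2(\Omega)$, $w_t\to v$ in $H^{\mathrm{div}}(\Omega)$, and mollification at scale $\delta$ below that distance lands in $\Dscr(\overline\Omega)^n$ and converges in the graph norm. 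The reduction of the second assertion to the first is also fine.
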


For proof, see \cite[Thm I.2.4]{GiRaBook}. If $\Omega$ is a bounded Lipschitz set in $\R^n$, then the outward unit normal vector field is defined for almost all $x\in\partial\Omega$ using local coordinates, and we can define a vector field $\nu$ in a neighbourhood of $\overline\Omega$ that coincides with the outward unit normal vector field for almost every $x\in\partial\Omega$; see \cite[Def.\ 13.6.3]{TuWeBook} and the remarks following. According to \cite[pp.\ 424--425]{TuWeBook}, we have $\nu\in L^\infty(\partial\Omega)^n$.

The space $H^{1/2}(\partial\Omega)$ is the Hilbert space of all functions in $L^2(\partial\Omega)$ with finite $H^{1/2}(\partial\Omega)$ norm, which is given by
\begin{equation}\label{eq:HhalfNorm}
  \|f\|_{H^{1/2}(\partial\Omega)}^2 = \|f\|_{L^2(\partial\Omega)}^2
    + \int_{\partial\Omega} \int_{\partial\Omega} \frac{|f(x)-f(y)|^2}{\|x-y\|_{\R^n}^n} \ud \sigma_x\ud \sigma_y,
\end{equation}
where $\ud\sigma$ is the surface measure on $\partial\Omega$; see \cite[\S4]{parahyp_report} or \cite[pp.\ 422--423]{TuWeBook} for more details. The space $H^{-1/2}(\partial\Omega)$ is the dual of $H^{1/2}(\partial\Omega)$ with pivot space $L^2(\partial\Omega)$; see e.g.\ \cite[\S 2.9]{TuWeBook}. 

The following result is a consequence of \cite[Thm I.1.5]{GiRaBook}:

\begin{lemma}\label{lem:dirichletrange}
For a bounded Lipschitz set $\Omega$, the boundary trace mapping $g\mapsto g|_{\partial\Omega}:\Dscr(\overline\Omega)\to C(\partial\Omega)$ has a unique continuous extension $\gamma_0$ that maps $H^1(\Omega)$ onto $H^{1/2}(\partial\Omega)$. The space $H^1_0(\Omega)$ in Definition \ref{def:Hzero} equals $\set{g\in H^1(\Omega)\mid \gamma_0g=0}$.
\end{lemma}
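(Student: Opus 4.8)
The plan is to reduce everything to a local model on a half-space by means of a finite atlas of bi-Lipschitz boundary charts together with a subordinate partition of unity, which is the standard route for Lipschitz domains. First I would recall that $\Dscr(\overline\Omega)$ is dense in $H^1(\Omega)$ when $\Omega$ is a bounded Lipschitz set; this is itself established by the chart-and-partition technique combined with a translation-and-mollification argument, and is part of \cite{GiRaBook}. Next, for $g\in\Dscr(\overline\Omega)$ I would split $g$ with the partition of unity, flatten each boundary piece via its bi-Lipschitz chart, and prove the model inequality $\norm{g|_{\R^{n-1}\times\set0}}_{H^{1/2}(\R^{n-1})}\le C\norm{g}_{H^1(\R^n_+)}$ by a direct computation with the Gagliardo seminorm from \eqref{eq:HhalfNorm} (or, equivalently, via the Fourier transform on the half-space). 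Summing over the charts gives $\norm{g|_{\partial\Omega}}_{H^{1/2}(\partial\Omega)}\le C\norm{g}_{H^1(\Omega)}$; since $H^{1/2}(\partial\Omega)$ is complete and $\Dscr(\overline\Omega)$ is dense in $H^1(\Omega)$, the trace map extends uniquely to a bounded operator $\gamma_0:H^1(\Omega)\to H^{1/2}(\partial\Omega)$.

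For surjectivity I would construct a bounded right inverse. On the half-space one has an explicit bounded lifting $E_0:H^{1/2}(\R^{n-1})\to H^1(\R^n_+)$ with $(E_0 f)|_{\R^{n-1}}=f$ — for instance a Fourier-multiplier lifting, or the harmonic extension localized by a cutoff. Transporting $E_0$ through the charts and reassembling with the partition of unity produces $E:H^{1/2}(\partial\Omega)\to H^1(\Omega)$ with $\gamma_0 E=I$, whence $\gamma_0$ is onto. Here one must check that the bi-Lipschitz change of variables preserves $H^{1/2}$ of the boundary and $H^1$ of the domain, which it does because both norms are built from first-order quantities only.

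It remains to identify the kernel. The inclusion $H^1_0(\Omega)\subset\Ker{\gamma_0}$ is immediate: by construction $\gamma_0$ vanishes on $\Dscr(\Omega)$ and is continuous, while $H^1_0(\Omega)$ is by definition the $H^1$-closure of $\Dscr(\Omega)$. For the reverse inclusion, given $g\in H^1(\Omega)$ with $\gamma_0 g=0$ I would localize once more: away from the boundary an interior mollification approximates $g$ by test functions, while near each boundary chart I would translate the flattened function a small distance into the half-space, use the vanishing trace together with a Hardy-type estimate (Poincaré in the normal direction) to control the error in $H^1$, and then mollify. Patching the local approximants yields a sequence in $\Dscr(\Omega)$ converging to $g$ in $H^1(\Omega)$, so $g\in H^1_0(\Omega)$.

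The continuity and surjectivity of $\gamma_0$ are routine once the chart machinery is in place; the genuinely delicate point — and the one that really uses the Lipschitz hypothesis rather than mere boundedness of $\Omega$ — is $\Ker{\gamma_0}\subset H^1_0(\Omega)$. Making the inward-translation argument quantitative requires the Hardy inequality $\int_\Omega |g(\xi)|^2/\mathrm{dist}(\xi,\partial\Omega)^2\,d\xi\le C\norm{g}_{H^1(\Omega)}^2$ for $g$ with zero trace, whose proof on Lipschitz domains is the technical heart of the matter; this is exactly what is packaged in \cite[Thm I.1.5]{GiRaBook}, which I would ultimately invoke rather than reprove.
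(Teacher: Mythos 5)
Your outline is the standard chart-and-partition argument and is correct as far as it goes; the paper itself offers no proof of this lemma, stating only that it is a consequence of \cite[Thm I.1.5]{GiRaBook} --- which is exactly the result your sketch reproves and which you also end up invoking for the hardest step. So the two approaches coincide in substance, and your identification of the kernel characterization $\Ker{\gamma_0}\subset H^1_0(\Omega)$ as the genuinely delicate point is consistent with where the cited theorem carries the real weight.
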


We call $\gamma_0$ the \emph{Dirichlet trace map}. In the following integration by parts formula, the dot $\cdot$ denotes the inner product in $\R^n$, $p\cdot q = q^\top p$ without complex conjugate:

\begin{lemma}\label{lem:partint}
Let $\Omega$ be a bounded Lipschitz subset of $\R^n$. Then
\begin{equation}\label{eq:partint}
  \Ipdp{\Div f}{g}_{L^2(\Omega)} + \Ipdp{f}{\Grad g}_{L^2(\Omega)^n} =   \int_{\partial\Omega}(\nu\cdot \cb\gamma_0f)\,\cb\gamma_0\overline g\ud\sigma.
\end{equation}
holds for arbitrary $f\in H^1(\Omega)^n$ and $g\in H^1(\Omega)$.
\end{lemma}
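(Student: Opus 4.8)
The plan is to prove \eqref{eq:partint} first for smooth data, where it reduces to the classical divergence theorem, and then to extend it to all of $H^1(\Omega)^n\times H^1(\Omega)$ by density and continuity. \emph{For the smooth case}, I would take $f\in\Dscr(\overline\Omega)^n$ and $g\in\Dscr(\overline\Omega)$ and apply the Leibniz rule to the $C^\infty(\overline\Omega)$ vector field $v:=\overline g\,f$, obtaining $\Div v=\overline g\,\Div f+f\cdot\Grad\overline g$. Integrating over $\Omega$ then turns the left-hand side of \eqref{eq:partint} into $\int_\Omega\Div v\ud x$, and the Gauss--Green theorem for bounded Lipschitz domains, $\int_\Omega\Div v\ud x=\int_{\partial\Omega}\nu\cdot v\ud\sigma$, identifies this with $\int_{\partial\Omega}\overline g\,(\nu\cdot f)\ud\sigma$. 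Since the trace of a function in $\Dscr(\overline\Omega)$ is just its pointwise restriction to $\partial\Omega$, i.e.\ $\gamma_0 f=f|_{\partial\Omega}$ and $\gamma_0\overline g=\overline g|_{\partial\Omega}$, this last integral is precisely the right-hand side of \eqref{eq:partint}.

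\emph{To pass to the limit}, I would note that both sides of \eqref{eq:partint} are continuous sesquilinear forms in $(f,g)\in H^1(\Omega)^n\times H^1(\Omega)$: the left-hand side because $\norm{\Div f}_{L^2(\Omega)}$ and $\norm{\Grad g}_{L^2(\Omega)^n}$ are dominated by the respective $H^1$-norms together with the Cauchy--Schwarz inequality; the right-hand side because $\nu\in L^\infty(\partial\Omega)^n$ while $\gamma_0$ is bounded from $H^1(\Omega)$ into $L^2(\partial\Omega)$, by Lemma \ref{lem:dirichletrange} combined with the continuous embedding $H^{1/2}(\partial\Omega)\hookrightarrow L^2(\partial\Omega)$ built into the norm \eqref{eq:HhalfNorm} (applied componentwise to $f$). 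Using the standard density of $\Dscr(\overline\Omega)$ in $H^1(\Omega)$ for a bounded Lipschitz set $\Omega$ --- hence of $\Dscr(\overline\Omega)^n$ in $H^1(\Omega)^n$ --- I would then approximate arbitrary $f\in H^1(\Omega)^n$ and $g\in H^1(\Omega)$ by smooth $f_k$ and $g_k$, apply the smooth case to each pair $(f_k,g_k)$, and take limits on both sides.

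\emph{The main obstacle} is the classical divergence theorem on a general bounded Lipschitz domain used in the smooth case: one needs that the Gauss--Green formula, elementary for smooth domains, survives mere Lipschitz regularity of $\partial\Omega$, which is precisely where the almost-everywhere defined outward normal $\nu$ and the boundary surface measure enter, and where a standard reference (e.g.\ \cite{GiRaBook} or \cite{parahyp_report}) has to be invoked rather than an elementary computation. The density of $\Dscr(\overline\Omega)$ in $H^1(\Omega)$ for Lipschitz domains is likewise standard --- obtained from a bounded extension operator $H^1(\Omega)\to H^1(\R^n)$ followed by mollification --- and the remaining limiting argument is routine.
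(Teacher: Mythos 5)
Your proof is correct and follows the standard route (Gauss--Green for smooth data on a Lipschitz domain, then density of $\Dscr(\overline\Omega)$ in $H^1(\Omega)$ together with continuity of both sides); the paper does not prove this lemma itself but simply cites \cite[Rem.\ 13.7.2]{TuWeBook}, and the cited argument is essentially the one you give. The one ingredient you rightly flag as non-elementary --- the divergence theorem on a merely Lipschitz boundary with the a.e.-defined normal $\nu$ and surface measure --- is indeed the point that must be outsourced to a reference, exactly as you do.
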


For a proof, see \cite[Rem.\ 13.7.2]{TuWeBook}. Note that $f\in H^1(\Omega)^n$ implies that the boundary trace of $f$, $\gamma_0f\in H^{1/2}(\partial\Omega)^n\subset L^2(\partial\Omega)^n$. Moreover, by the above it holds that $\nu\in L^\infty(\partial\Omega)^n$, and hence we obtain that $\nu\cdot \gamma_0 f\in L^2(\partial\Omega)$ for all $f\in H^1(\Omega)^n$.

In the sequel we make the standing assumption that $\Gamma_0,\Gamma_\bullet$ forms a splitting of $\partial\Omega$ with thin boundaries; see Definition \ref{def:tcb}. The result statements remain true if $\Gamma_\bullet$ is further split into subsets with thin boundaries, as we do in \S\S\ref{sec:waveeq}--\ref{sec:cons}. Following \cite[\S13.6]{TuWeBook}, we write
\begin{equation}\label{eq:zerospace}
\begin{aligned}
  H_{\Gamma_0}^1(\Omega) &:= \set{g\in H^1(\Omega)\mid(\gamma_0g)|_{\Gamma_0}=0~\text{in}\ L^2(\Gamma_0)}.
\end{aligned}
\end{equation}
We can also write $H^1_{\Gamma_0}(\Omega)=\Ker{\pi_0\,\gamma_0}$, where $\pi_0$ is the orthogonal projection of $L^2(\partial\Omega)$ onto $L^2(\Gamma_0)$. Since $H^{1/2}(\partial\Omega)$ is continuously embedded in $L^2(\partial\Omega)$ by \eqref{eq:HhalfNorm}, the operator $\pi_0\,\gamma_0:H^1(\Omega)\to L^2(\Gamma_0)$ is bounded; hence $H^1_{\Gamma_0}(\Omega)$ is closed in $H^1(\Omega)$.

Obviously $\gamma_0$ maps $H^1_{\Gamma_0}(\Omega)$ onto $\Wscr:=\gamma_0H^1_{\Gamma_0}(\Omega)$ with inner product inherited from $H^{1/2}(\partial\Omega)$. This space is dense in $L^2(\Gamma_\bullet)$ by \cite[Thm 13.6.10 and Rem.\ 13.6.12]{TuWeBook}, and it is immediate that the inclusion map is continuous. Denote the dual of $\Wscr$ with pivot space $L^2(\Gamma_\bullet)$ by $\Wscr'$. 

By the Riesz representation theorem, there exists a unitary operator $\Psi_\Wscr:\Wscr'\to \Wscr$, such that
\begin{equation}\label{eq:dualityopH}
  (x,z)_{\Wscr',\Wscr} 
    = \Ipdp{\Psi_\Wscr x}{z}_{\Wscr} = \Ipdp{x}{\Psi_\Wscr^*z}_{\Wscr'}
\end{equation}
for all $x\in \Wscr'$ and $z\in \Wscr$; see \cite[p.\ 57]{TuWeBook} and \cite[p.\ 288--289]{MaSt07}. Thus $\Wscr'$ is also a Hilbert space, with inner product
$$
  \Ipdp{u}{v}_{\Wscr'}=\Ipdp{\Psi_\Wscr u}{\Psi_\Wscr v}_{\Wscr},\quad u,v\in \Ipdp{u}{v}_{\Wscr'}.
$$
The operator $\Psi_\Wscr$ can alternatively be characterised as the operator in $\Lscr(\Wscr';\Wscr)$ uniquely determined by
$$
  \Ipdp{\Psi_\Wscr x}{z}_\Wscr=\lim_{n\to\infty}\Ipdp{x_n}{z}_{L^2(\Gamma_\bullet)},\quad x\in\Wscr',~z\in\Wscr,
$$
where $x_n\in L^2(\Gamma_\bullet)$ is an arbitrary sequence converging to $x$ in $\Wscr'$; see \cite[\S2.9]{TuWeBook}.

\begin{proposition}\label{prop:dirichletrange2a}
For a bounded Lipschitz set $\Omega$, the restricted normal trace map $u\mapsto (\nu \cdot u)\big|_{\Gamma_\bullet}:\Dscr(\overline\Omega)^n\to L^2(\Gamma_\bullet)$ has a unique continuous extension $\gamma_\perp $ that maps $H^{\mathrm{div}}(\Omega)$ \emph{into} $\Wscr'$. 
\end{proposition}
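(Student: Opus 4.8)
The plan is to define $\gamma_\perp$ directly on all of $H^{\mathrm{div}}(\Omega)$ via the integration by parts formula \eqref{eq:partint} and the $(\Wscr',\Wscr)$-duality, then verify that it is bounded into $\Wscr'$, that it restricts to the classical normal trace on $\Dscr(\overline\Omega)^n$, and that it is the unique such continuous extension (the last point being immediate from Lemma \ref{lem:TestDenseHdiv}).

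Concretely, for $u\in H^{\mathrm{div}}(\Omega)$ I would consider the conjugate-linear functional $g\mapsto\Ipdp{\Div u}{g}_{L^2(\Omega)}+\Ipdp{u}{\Grad g}_{L^2(\Omega)^n}$ on $H^1_{\Gamma_0}(\Omega)$. Cauchy--Schwarz bounds it by $\|u\|_{H^{\mathrm{div}}(\Omega)}\|g\|_{H^1(\Omega)}$. Using the definition of the distributional divergence together with the density of $\Dscr(\Omega)$ in $H^1_0(\Omega)$, this functional annihilates $\ker(\gamma_0)=H^1_0(\Omega)$ (Lemma \ref{lem:dirichletrange}), hence factors through $H^1_{\Gamma_0}(\Omega)/H^1_0(\Omega)$, which $\gamma_0$ identifies with $\Wscr$. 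To upgrade the factored functional to an element of $\Wscr'$ with a norm estimate, I would invoke the open mapping (bounded inverse) theorem: $\gamma_0\colon H^1_{\Gamma_0}(\Omega)\to\Wscr$ is bounded and onto (the latter by the very definition $\Wscr=\gamma_0H^1_{\Gamma_0}(\Omega)$), so the $H^{1/2}(\partial\Omega)$-inherited norm on $\Wscr$ is equivalent to the quotient norm; thus every $w\in\Wscr$ admits a lift $g\in H^1_{\Gamma_0}(\Omega)$ with $\gamma_0g=w$ and $\|g\|_{H^1(\Omega)}\le C\|w\|_{\Wscr}$. Inserting this bound shows the functional is continuous on $\Wscr$ with norm $\le C\|u\|_{H^{\mathrm{div}}(\Omega)}$, so it represents a unique $\gamma_\perp u\in\Wscr'$ with $(\gamma_\perp u,w)_{\Wscr',\Wscr}=\Ipdp{\Div u}{g}_{L^2(\Omega)}+\Ipdp{u}{\Grad g}_{L^2(\Omega)^n}$ whenever $\gamma_0 g=w$; clearly $u\mapsto\gamma_\perp u$ is linear and bounded from $H^{\mathrm{div}}(\Omega)$ into $\Wscr'$.

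It then remains to check that $\gamma_\perp u=(\nu\cdot u)|_{\Gamma_\bullet}$ for $u\in\Dscr(\overline\Omega)^n$. For such $u$, \eqref{eq:partint} rewrites the defining pairing as $\int_{\partial\Omega}(\nu\cdot u)\,\gamma_0\overline g\ud\sigma$; since $g\in H^1_{\Gamma_0}(\Omega)$ has $\gamma_0g=0$ on $\Gamma_0$ and the common boundary $\partial\Gamma_0\cup\partial\Gamma_\bullet$ has surface measure zero, this equals $\Ipdp{(\nu\cdot u)|_{\Gamma_\bullet}}{w}_{L^2(\Gamma_\bullet)}$, i.e.\ $\gamma_\perp u$ acts on $\Wscr$ exactly as the pivot embedding of $(\nu\cdot u)|_{\Gamma_\bullet}$ does. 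Since $\Dscr(\overline\Omega)^n$ is dense in $H^{\mathrm{div}}(\Omega)$ by Lemma \ref{lem:TestDenseHdiv} and $\gamma_\perp$ is continuous, $\gamma_\perp$ is the unique continuous extension of the restricted normal trace.

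I expect the only step requiring more than routine bookkeeping to be the norm equivalence between the $H^{1/2}(\partial\Omega)$-norm on $\Wscr$ and the quotient norm of $H^1_{\Gamma_0}(\Omega)/H^1_0(\Omega)$ (equivalently, a bounded right inverse of $\gamma_0$ onto the non-standard space $\Wscr$); everything else is a direct application of \eqref{eq:partint}, a standard distributional identity, or the thin-boundary bookkeeping already set up in the appendix.
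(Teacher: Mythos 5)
Your argument is correct, and it uses the same two essential ingredients as the paper's proof --- the integration-by-parts formula \eqref{eq:partint} and a bounded right inverse of $\gamma_0$ onto $\Wscr$ (equivalently, the norm equivalence with the quotient norm, which you correctly identify as the one non-routine point; it follows from the open mapping theorem once one notes that $\Wscr=\Ker{\pi_0|_{H^{1/2}(\partial\Omega)}}$ is closed in $H^{1/2}(\partial\Omega)$, hence complete) --- but it organizes them in the dual order. The paper estimates the classical pairing $\int_{\partial\Omega}(\nu\cdot u)\,\overline{\gamma_0^{-r}\mu}\ud\sigma$ for smooth $u$, obtains the bound $2\norm{u}_{H^{\mathrm{div}}(\Omega)}\norm{\gamma_0^{-r}}\norm{\mu}_\Wscr$, and then \emph{extends by density}, so that Lemma \ref{lem:TestDenseHdiv} is needed to define $\gamma_\perp$ on all of $H^{\mathrm{div}}(\Omega)$. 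You instead \emph{define} $\gamma_\perp u$ for every $u\in H^{\mathrm{div}}(\Omega)$ as the functional $g\mapsto\Ipdp{\Div u}{g}_{L^2(\Omega)}+\Ipdp{u}{\Grad g}_{L^2(\Omega)^n}$ factored through $\gamma_0$ (the annihilation of $H^1_0(\Omega)$ via the distributional definition of $\Div$ and Lemma \ref{lem:dirichletrange} is the extra step your route requires), and use the density of $\Dscr(\overline\Omega)^n$ only to verify that this operator agrees with the classical normal trace and is the unique continuous extension. What your version buys is that existence of $\gamma_\perp$ no longer depends on the (nontrivial) density Lemma \ref{lem:TestDenseHdiv}, only uniqueness does; what the paper's version buys is that formula \eqref{eq:partintext2} for general $f\in H^{\mathrm{div}}(\Omega)$ then comes out of the subsequent limiting argument in Theorem \ref{thm:partintext2}, whereas in your setup it holds by definition. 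Both are complete proofs of the stated proposition.
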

\begin{proof}
We follow the argument in \cite[Thm I.2.5]{GiRaBook} with some small modifications. By \eqref{eq:partint}, we have 
$$
\begin{aligned}
\left|\int_{\partial\Omega}(\nu\cdot u)\,\overline \phi\ud\sigma\right|&\leq \left|\Ipdp{\Div u}{\phi}_{L^2(\Omega)}\right| + \left|\Ipdp{u}{\Grad \phi}_{L^2(\Omega)^n}\right|\\
&\leq \|\Div u\|_{L^2(\Omega)}\,\|\phi\|_{L^2(\Omega)} + \|u\|_{L^2(\Omega)^n}\,\|\Grad \phi\|_{L^2(\Omega)^n}\\
&\leq 2\|u\|_{H^{\mathrm{div}}(\Omega)}\,\|\phi\|_{H^1(\Omega)}
	,\quad u\in\Dscr(\overline\Omega)^n,~\phi\in H^1(\Omega).
\end{aligned}
$$

Denote an arbitrary continuous right inverse of $\gamma_0$ by $\gamma_0^{-r}$, choose an arbitrary $\mu\in\Wscr$, and set $\phi:=\gamma_0^{-r}\mu$. Since $\mu$ vanishes on $\Gamma_0$, we obtain
$$
  \left|\int_{\Gamma_\bullet} (\nu \cdot u)\,\overline\mu\ud\sigma\right|=\left|\int_{\partial\Omega}(\nu\cdot u)\,\overline \gamma_0^{-r}\mu\ud\sigma\right|\leq 2\|u\|_{H^{\mathrm{div}}(\Omega)}\|\gamma_0^{-r}\|\|\mu\|_\Wscr,
$$
i.e., that the restricted normal trace has operator norm at most $2\|\gamma_0^{-r}\|$ from $H^{\mathrm{div}}(\Omega)$ into $\Wscr'$. This restricted normal trace is defined densely in $H^{\mathrm{div}}(\Omega)$ by Lemma \ref{lem:TestDenseHdiv}, and hence it can be extended uniquely to a bounded operator $\gamma_\perp$ from $H^{\mathrm{div}}(\Omega)$ into $\Wscr'$.
\end{proof}

The operator $\gamma_\perp $ is referred to as the \emph{(restricted) normal trace map}.

\begin{theorem}\label{thm:partintext2}
Let $\Omega$ be a bounded Lipschitz set in $\R^n$. For all $f\in H^{\mathrm{div}}(\Omega)$ and $g\in H^1_{\Gamma_0}(\Omega)$ it holds that
\begin{equation}\label{eq:partintext2}
  \Ipdp{\Div f}{g}_{L^2(\Omega)} + \Ipdp{f}{\Grad g}_{L^2(\Omega)^n} 
    = (\gamma_\perp  f,\gamma_0 g)_{\Wscr',\Wscr}.
\end{equation}
In particular, we have the following Green's formula:
\begin{equation}\label{eq:Green2}
  \Ipdp{\Delta h}{g}_{L^2(\Omega)} + \Ipdp{\Grad h}{\Grad g}_{L^2(\Omega)^n} 
    = (\gamma_\perp  \,\Grad h,\gamma_0 g)_{\Wscr',\Wscr},
\end{equation}
which is valid for all $h\in H^1(\Omega)$ such that $\Delta h\in L^2(\Omega)$ and all $g\in H^1_{\Gamma_0}(\Omega)$.
\end{theorem}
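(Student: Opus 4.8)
The plan is to derive \eqref{eq:partintext2} by extending, by density in the first argument, the integration by parts formula \eqref{eq:partint} of Lemma \ref{lem:partint}, which already covers the smoother case $f\in H^1(\Omega)^n$, $g\in H^1(\Omega)$.

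First I would fix $g\in H^1_{\Gamma_0}(\Omega)$ and check the claimed identity for $f\in\Dscr(\overline\Omega)^n$. For such $f$, formula \eqref{eq:partint} gives $\langle\Div f,g\rangle_{L^2(\Omega)}+\langle f,\Grad g\rangle_{L^2(\Omega)^n}=\int_{\partial\Omega}(\nu\cdot\gamma_0 f)\,\overline{\gamma_0 g}\ud\sigma$. Splitting $\partial\Omega$ along the splitting $\Gamma_0,\Gamma_\bullet$ with thin boundaries, and using that $\gamma_0 g$ vanishes on $\Gamma_0$ (Lemma \ref{lem:dirichletrange} together with the definition \eqref{eq:zerospace}) while $\partial\Gamma_0$ has surface measure zero, the boundary integral reduces to $\int_{\Gamma_\bullet}(\nu\cdot f)\,\overline{\gamma_0 g}\ud\sigma$. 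By the construction of $\gamma_\perp$ in Proposition \ref{prop:dirichletrange2a}, $\gamma_\perp f=(\nu\cdot f)\big|_{\Gamma_\bullet}\in L^2(\Gamma_\bullet)$ for such smooth $f$, and since $L^2(\Gamma_\bullet)$ is the pivot space for the pair $(\Wscr,\Wscr')$, the duality pairing $(\gamma_\perp f,\gamma_0 g)_{\Wscr',\Wscr}$ coincides with $\langle\gamma_\perp f,\gamma_0 g\rangle_{L^2(\Gamma_\bullet)}$. This establishes \eqref{eq:partintext2} for all $f\in\Dscr(\overline\Omega)^n$.

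Next I would pass to the limit, keeping $g\in H^1_{\Gamma_0}(\Omega)$ fixed. The map $f\mapsto\langle\Div f,g\rangle_{L^2(\Omega)}+\langle f,\Grad g\rangle_{L^2(\Omega)^n}$ is continuous on $H^{\mathrm{div}}(\Omega)$, each term being bounded by a constant (depending on $g$) times $\|f\|_{H^{\mathrm{div}}(\Omega)}$; and $f\mapsto(\gamma_\perp f,\gamma_0 g)_{\Wscr',\Wscr}$ is continuous on $H^{\mathrm{div}}(\Omega)$ as well, because $\gamma_\perp\in\Lscr(H^{\mathrm{div}}(\Omega);\Wscr')$ by Proposition \ref{prop:dirichletrange2a} and $\gamma_0 g\in\Wscr$ is fixed. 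Since $\Dscr(\overline\Omega)^n$ is dense in $H^{\mathrm{div}}(\Omega)$ by Lemma \ref{lem:TestDenseHdiv}, the identity extends from $\Dscr(\overline\Omega)^n$ to all of $H^{\mathrm{div}}(\Omega)$, which is precisely \eqref{eq:partintext2}.

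Finally, the Green's formula \eqref{eq:Green2} follows at once: if $h\in H^1(\Omega)$ satisfies $\Delta h\in L^2(\Omega)$, then $f:=\Grad h\in L^2(\Omega)^n$ has $\Div f=\Delta h\in L^2(\Omega)$, so $f\in H^{\mathrm{div}}(\Omega)$, and applying \eqref{eq:partintext2} to this $f$ and an arbitrary $g\in H^1_{\Gamma_0}(\Omega)$ yields \eqref{eq:Green2}. I expect the only delicate point to be the first step, namely reconciling the abstract, density-defined operator $\gamma_\perp$ and the duality pairing $(\cdot,\cdot)_{\Wscr',\Wscr}$ with the concrete surface integral in \eqref{eq:partint}, and in particular verifying that the contributions of $\Gamma_0$ and of $\partial\Gamma_0$ to the boundary integral vanish; once this bookkeeping is settled, the remaining density argument is routine.
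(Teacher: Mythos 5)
Your proposal is correct and follows essentially the same route as the paper: verify the identity on a dense class of smooth vector fields using Lemma \ref{lem:partint}, observe that the boundary integral reduces to one over $\Gamma_\bullet$ because $\gamma_0 g$ vanishes on $\Gamma_0$ and coincides there with the pivot-space duality pairing, and then extend by density using Lemma \ref{lem:TestDenseHdiv} and the continuity of $\gamma_\perp$ from Proposition \ref{prop:dirichletrange2a}; the Green's formula is obtained identically by setting $f=\Grad h$. The only cosmetic difference is that you start from $f\in\Dscr(\overline\Omega)^n$ whereas the paper starts from $f\in H^1(\Omega)^n$, both of which are dense in $H^{\mathrm{div}}(\Omega)$.
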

\begin{proof}
Since $\pi_0\gamma_0g=0$ for $g\in H^1_{\Gamma_0}(\Omega)$, we obtain from \eqref{eq:partint} that
$$
  \Ipdp{\Div f}{g}_{L^2(\Omega)} + \Ipdp{f}{\Grad g}_{L^2(\Omega)^n} = \Ipdp{\gamma_\perp  f}{\gamma_0 g}_{L^2(\Gamma_\bullet)}
$$
for $f\in H^1(\Omega)^n$ and $g\in H_{\Gamma_0}^1(\Omega)$. Using the fact that $\Wscr'$ is the dual of $\Wscr$ with pivot space $L^2(\Gamma_\bullet)$, we obtain \eqref{eq:partintext2} for $f\in H^1(\Omega)^n$ and $g\in H_{\Gamma_0}^1(\Omega)$.

For every $g\in H_{\Gamma_0}^1(\Omega)$, the mapping $u\mapsto(u,\gamma_0 g)_{\Wscr',\Wscr}$ is a bounded linear functional on $\Wscr'$, and by Proposition \ref{prop:dirichletrange2a}, $\gamma_\perp $ maps $H^{\mathrm{div}}(\Omega)$ continuously into $\Wscr'$. Hence, if $f_n\in H^1(\Omega)^n$ tends to $f$ in $H^{\mathrm{div}}(\Omega)$, then $\Div f_n\to \Div f$ in $L^2(\Omega)$, $f_n\to f$ in $L^2(\Omega)^n$, and $\gamma_\perp  f_n\to \gamma_\perp f$ in $\Wscr'$. We can thus conclude that \eqref{eq:partintext2} holds for all $g\in H^1_{\Gamma_0}(\Omega)$ and all $f$ in the closure of $H^1(\Omega)^n$ in $H^{\mathrm{div}}(\Omega)$, i.e., for all $f\in H^{\mathrm{div}}(\Omega)$; see Lemma \ref{lem:TestDenseHdiv}.

In order to prove \eqref{eq:Green2}, we let $h\in H^1(\Omega)$ be such that $\Delta h\in L^2(\Omega)$ and set $f:=\Grad h$. Then $f\in L^2(\Omega)^n$ and $\Div(\Grad h)=\Delta h\in L^2(\Omega)$, so $f\in H^{\mathrm{div}}(\Omega)$. Now \eqref{eq:Green2} follows from \eqref{eq:partintext2}.
\end{proof}

If we take $\Gamma_0=\emptyset$ in the preceding theorem, then we obtain a well-known special case. The next result gives the surjectivity of the normal trace map, and this critical for associating a boundary triplet to the wave equation.

\begin{theorem}\label{thm:dirichletrange2b}
For a bounded Lipschitz set $\Omega$, $\gamma_\perp $ maps $H^{\mathrm{div}}(\Omega)$ boundedly \emph{onto} $\Wscr'$. 
\end{theorem}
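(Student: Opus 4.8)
The boundedness of $\gamma_\perp$ from $H^{\mathrm{div}}(\Omega)$ into $\Wscr'$ is already supplied by Proposition \ref{prop:dirichletrange2a}, so the only thing left to prove is surjectivity. The plan is the classical one: realise an arbitrary $\ell\in\Wscr'$ as the restricted normal trace of the gradient of the solution of an auxiliary elliptic problem with mixed boundary conditions (Dirichlet on $\Gamma_0$, Neumann-type on $\Gamma_\bullet$), adapting the usual construction by using $H^1_{\Gamma_0}(\Omega)$ and $\Wscr$ in place of $H^1(\Omega)$ and $H^{1/2}(\partial\Omega)$.

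First I would fix $\ell\in\Wscr'$ and work in the Hilbert space $H^1_{\Gamma_0}(\Omega)$, which is closed in $H^1(\Omega)$ and whose inner product is precisely $a(u,v):=\Ipdp{\Grad u}{\Grad v}_{L^2(\Omega)^n}+\Ipdp{u}{v}_{L^2(\Omega)}$. Since $\gamma_0$ maps $H^1_{\Gamma_0}(\Omega)$ boundedly into $\Wscr$ by Lemma \ref{lem:dirichletrange} and the definition of $\Wscr$, the assignment $v\mapsto(\ell,\gamma_0 v)_{\Wscr',\Wscr}$ is a bounded conjugate-linear functional on $H^1_{\Gamma_0}(\Omega)$, so the Riesz representation theorem produces a unique $u\in H^1_{\Gamma_0}(\Omega)$ with
\[
  a(u,v)=(\ell,\gamma_0 v)_{\Wscr',\Wscr},\qquad v\in H^1_{\Gamma_0}(\Omega).
\]
Putting $f:=\Grad u\in L^2(\Omega)^n$ and testing this identity against $v\in\Dscr(\Omega)\subset H^1_{\Gamma_0}(\Omega)$ (for which $\gamma_0 v=0$) gives $-\Delta u+u=0$ in $\Dscr'(\Omega)$, whence $\Div f=\Div\Grad u=\Delta u=u\in L^2(\Omega)$, and therefore $f\in H^{\mathrm{div}}(\Omega)$.

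It then remains to identify $\gamma_\perp f$ with $\ell$. Applying the integration by parts formula \eqref{eq:partintext2} (equivalently Green's formula \eqref{eq:Green2} with $h=u$), for every $g\in H^1_{\Gamma_0}(\Omega)$ we obtain
\[
  (\gamma_\perp f,\gamma_0 g)_{\Wscr',\Wscr}
   =\Ipdp{\Div f}{g}_{L^2(\Omega)}+\Ipdp{f}{\Grad g}_{L^2(\Omega)^n}
   =\Ipdp{u}{g}_{L^2(\Omega)}+\Ipdp{\Grad u}{\Grad g}_{L^2(\Omega)^n}
   =(\ell,\gamma_0 g)_{\Wscr',\Wscr}.
\]
Because $\gamma_0$ maps $H^1_{\Gamma_0}(\Omega)$ onto $\Wscr$, the vectors $\gamma_0 g$ exhaust $\Wscr$, and by non-degeneracy of the $(\Wscr',\Wscr)$ duality pairing we conclude $\gamma_\perp f=\ell$. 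Hence $\gamma_\perp$ is onto, and together with Proposition \ref{prop:dirichletrange2a} it maps $H^{\mathrm{div}}(\Omega)$ boundedly onto $\Wscr'$.

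I do not expect a genuine obstacle here, since the analytic content has been packaged into the appendix; the only points needing a little care are (i) that $a(\cdot,\cdot)$ is coercive on $H^1_{\Gamma_0}(\Omega)$ — the zeroth-order term $\Ipdp{u}{v}$ secures coercivity with constant $1$ without invoking a Poincaré inequality, so the argument also covers the degenerate case $\Gamma_0=\emptyset$ — and (ii) that the integration by parts identity \eqref{eq:partintext2} is available for $f\in H^{\mathrm{div}}(\Omega)$ and $g\in H^1_{\Gamma_0}(\Omega)$, which is exactly Theorem \ref{thm:partintext2}.
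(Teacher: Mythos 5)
Your proof is correct and follows essentially the same route as the paper's: both solve the auxiliary problem $-\Delta\phi+\phi=0$ with the weak formulation $\Ipdp{v}{\phi}_{H^1_{\Gamma_0}(\Omega)}=(\gamma_0 v,\mu)_{\Wscr,\Wscr'}$, identify the distributional Laplacian by testing against $\Dscr(\Omega)$, and then read off $\gamma_\perp\Grad\phi=\mu$ from the extended integration-by-parts formula \eqref{eq:partintext2}. The only cosmetic difference is that you invoke the Riesz representation theorem where the paper cites Lax--Milgram; since the sesquilinear form is exactly the $H^1_{\Gamma_0}(\Omega)$ inner product, these coincide.
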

\begin{proof}
By Proposition \ref{prop:dirichletrange2a}, $\gamma_\perp $ maps $H^{\mathrm{div}}(\Omega)$ boundedly \emph{into} $\Wscr'$, and it only remains to establish surjectivity. For this we use an adaptation of the proof of \cite[Cor.\ I.2.]{GiRaBook}. First we fix an arbitrary $\mu\in \Wscr'$ and using the Lax-Milgram theorem \cite[Lemma 2.2.1.1]{GrisBook}, we find a unique $\phi\in H^1_{\Gamma_0}(\Omega)$ which solves the following problem:
\begin{equation}\label{eq:perpsurjprob}
  -\Delta\phi+\phi=0\quad\text{in $L^2(\Omega)
  $}\qquad\text{and}\qquad \gamma_\perp \Grad\phi=\mu.
\end{equation}

Indeed, the sesqui-linear form $(v,\phi)\mapsto\Ipdp{v}{\phi}_{H^1_{\Gamma_0}(\Omega)}$ is bounded and coercive on $H^1_{\Gamma_0}(\Omega)^2$, and the linear form $v\mapsto(\gamma_0v,\mu)_{\Wscr,\Wscr'}$ is bounded on $H^1_{\Gamma_0}(\Omega)$ according to Lemma \ref{lem:dirichletrange}. By the Lax-Milgram theorem there exists a unique $\phi\in H^1_{\Gamma_0}(\Omega)$, such that
\begin{equation}\label{eq:perptrsurjlm}
  \Ipdp{v}{\phi}_{H^1_{\Gamma_0}(\Omega)}=(\gamma_0v,\mu)_{\Wscr,\Wscr'},\quad 
    v\in H^1_{\Gamma_0}(\Omega).
\end{equation}
Taking $v\in\Dscr(\Omega)$, we by Lemma \ref{lem:dirichletrange} and Green's identity \eqref{eq:Green2} obtain that for all $v\in\Dscr(\Omega):$
$$
\begin{aligned}
  0&= \Ipdp{v}{\phi}_{H^1_{\Gamma_0}(\Omega)}=\Ipdp{v}{\phi}_{L^2(\Omega)}+\Ipdp{\Grad v}{\Grad\phi}_{L^2(\Omega)^n} \\
  &= (v,\overline\phi)_{\Dscr(\Omega),\Dscr(\Omega)'}+(\Grad v,\overline{\Grad\phi})_{\Dscr(\Omega)^n,(\Dscr(\Omega)')^n} 
    = (v,\overline{(I-\Delta)\phi})_{\Dscr(\Omega),\Dscr(\Omega)'},
\end{aligned}
$$
i.e., that $\Delta\phi=\phi$ in the sense of distributions on $\Omega$, and hence in particular $\phi\in H^1_{\Gamma_0}(\Omega)$ with $\Delta\phi\in L^2(\Omega)$. Using this and \eqref{eq:Green2} on \eqref{eq:perptrsurjlm}, we thus obtain
\begin{equation}\label{eq:perptrsurjlm2}
\begin{aligned}
 (\gamma_0v,\mu)_{\Wscr,\Wscr'} &=\Ipdp{v}{\phi}_{H^1_{\Gamma_0}(\Omega)} = \Ipdp{v}{\phi}_{L^2(\Omega)}+\Ipdp{\Grad v}{\Grad\phi}_{L^2(\Omega)^n} \\
  &= \Ipdp{v}{(I-\Delta)\phi}_{L^2(\Omega)}+(\gamma_0v,\gamma_\perp  \Grad\phi)_{\Wscr,\Wscr'}\\
  &= (\gamma_0v,\gamma_\perp  \Grad\phi)_{\Wscr,\Wscr'},\quad 
    v\in H^1_{\Gamma_0}(\Omega).
\end{aligned}
\end{equation}
This proves that $\phi$ solves the problem \eqref{eq:perpsurjprob}. Now we set $v:=\Grad\phi$, which lies in $H^{\mathrm{div}}(\Omega)$, because $\Div(\Grad\phi)=\phi$ by \eqref{eq:perpsurjprob}. Furthermore, $\gamma_\perp  v=\mu$ and hence $\gamma_\perp $ maps $H^{\mathrm{div}}(\Omega)$ \emph{onto} $\Wscr'$.
\end{proof}

We can now recover \cite[Thm I.2.5 and Cor.\ I.2.8]{GiRaBook} by taking $\Gamma_0=\emptyset$:

\begin{corollary}\label{cor:normaltracesurj}
The normal trace mapping $u\mapsto \nu \cdot \gamma_0u:\Dscr(\overline\Omega)^n\to L^2(\partial\Omega)$ has a unique continuous extension $\gamma_\perp$ that maps $H^{\mathrm{div}}(\Omega)$ boundedly \emph{onto} $H^{-1/2}(\partial\Omega)$.
\end{corollary}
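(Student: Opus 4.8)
\emph{Proof proposal.} The plan is to obtain the corollary as the special case $\Gamma_0=\emptyset$ of Proposition \ref{prop:dirichletrange2a} and Theorem \ref{thm:dirichletrange2b}, so that essentially nothing new needs to be proved; the only work is to identify what the notation $\Wscr$, $\Wscr'$, and $\gamma_\perp$ reduces to in this degenerate case.

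First I would record that $\{\Gamma_0,\Gamma_\bullet\}=\{\emptyset,\partial\Omega\}$ is indeed a splitting of $\partial\Omega$ with thin boundaries in the sense of Definition \ref{def:tcb}: the set $\Gamma_\bullet=\partial\Omega$ is simultaneously open and closed in the relative topology of $\partial\Omega$, so its boundary is $\emptyset$, which has zero surface measure, and $\partial\Gamma_0=\partial\emptyset=\emptyset$ as well. Hence all the results of Appendix \ref{sec:sobolev} proved under the standing assumption are available with this choice.

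Next I would identify the spaces. From \eqref{eq:zerospace}, the defining condition $(\gamma_0g)|_{\Gamma_0}=0$ is vacuous when $\Gamma_0=\emptyset$, so $H^1_{\Gamma_0}(\Omega)=H^1(\Omega)$. By Lemma \ref{lem:dirichletrange}, $\gamma_0$ maps $H^1(\Omega)$ onto $H^{1/2}(\partial\Omega)$, hence $\Wscr=\gamma_0H^1_{\Gamma_0}(\Omega)=H^{1/2}(\partial\Omega)$, carrying by definition the norm inherited from $H^{1/2}(\partial\Omega)$; that is, $\Wscr=H^{1/2}(\partial\Omega)$ as Hilbert spaces. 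Since $\Gamma_\bullet=\partial\Omega$, the pivot space $L^2(\Gamma_\bullet)$ equals $L^2(\partial\Omega)$, and therefore $\Wscr'$ — the dual of $\Wscr$ with this pivot — is exactly $H^{-1/2}(\partial\Omega)$ by the definition of the latter space in the appendix.

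Finally, with $\Gamma_\bullet=\partial\Omega$ the restricted normal trace $u\mapsto(\nu\cdot u)|_{\Gamma_\bullet}$ appearing in Proposition \ref{prop:dirichletrange2a} is nothing but $u\mapsto\nu\cdot\gamma_0u$ on $\Dscr(\overline\Omega)^n$. Proposition \ref{prop:dirichletrange2a} then supplies the unique continuous extension $\gamma_\perp:H^{\mathrm{div}}(\Omega)\to\Wscr'=H^{-1/2}(\partial\Omega)$, and Theorem \ref{thm:dirichletrange2b} supplies surjectivity onto $\Wscr'=H^{-1/2}(\partial\Omega)$. I expect no real obstacle here: the substantive content (the Lax--Milgram construction of a preimage) has already been carried out in the proof of Theorem \ref{thm:dirichletrange2b}, and the only point worth an explicit sentence is the verification that $\Wscr=H^{1/2}(\partial\Omega)$ isometrically, not merely as sets, which is immediate from the way $\Wscr$ was defined to inherit the $H^{1/2}$-norm.
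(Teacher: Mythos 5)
Your proposal is correct and coincides exactly with the paper's own treatment: the corollary is stated there precisely as the recovery of the classical result by taking $\Gamma_0=\emptyset$ in Proposition \ref{prop:dirichletrange2a} and Theorem \ref{thm:dirichletrange2b}, with no further argument given. Your extra bookkeeping (that $\{\emptyset,\partial\Omega\}$ is a splitting with thin boundaries, that $\Wscr=H^{1/2}(\partial\Omega)$ isometrically, and that $\Wscr'=H^{-1/2}(\partial\Omega)$) is exactly the right set of checks and is all that the specialisation requires.
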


\section{Two general operator-technical lemmas}

We apply the following lemmas in the proof of Theorem \ref{thm:divgradprops}:

\begin{lemma}\label{lem:closedchar}
Let $T$ be a closed linear operator from $\dom T\subset X$ into $Y$, where $X$ and $Y$ are Hilbert spaces. Equip $\dom T$ with the graph norm of $T$, in order to make it a Hilbert space. Let $R$ be a restriction of the operator $T$. 

The closure of the operator $R$ is $\overline R=T\big|_{\overline{\dom R}}$, where $\overline{\dom R}$ is the closure of $\dom R$ in the graph norm of $T$. In particular, $R$ is a closed operator if and only if $\dom R$ is closed in the graph norm of $T$.
\end{lemma}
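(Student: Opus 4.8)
The plan is to move everything to graphs inside $X\times Y$ and exploit that, by hypothesis, $\Gscr(T)$ is closed there. Write $\Gscr(T)=\sbm{I\\T}\dom T$ for the graph of $T$, and note that the map $\iota\colon x\mapsto\sbm{x\\Tx}$ is, by the very definition of the graph norm, an isometric isomorphism of $\dom T$ (equipped with the graph norm of $T$) onto $\Gscr(T)$ (equipped with the norm inherited from $X\times Y$); in particular $\iota$ is a homeomorphism, and $\Gscr(R)=\iota(\dom R)$ since $R\subset T$.

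First I would check that $R$ is closable and that $\Gscr(\overline R)$ equals the closure $\overline{\Gscr(R)}$ taken in $X\times Y$. This holds because $\overline{\Gscr(R)}\subset\overline{\Gscr(T)}=\Gscr(T)$, and any linear subspace contained in a graph is itself a graph: if $\sbm{0\\y}$ belongs to it, then $\sbm{0\\y}\in\Gscr(T)$ forces $y=T0=0$. Hence $\overline{\Gscr(R)}$ is the graph of a (closed) operator, which by definition of operator closure is $\overline R$.

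Next I would identify $\dom{\overline R}$. Since $\Gscr(T)$ is closed in $X\times Y$, the closure of $\Gscr(R)$ within $\Gscr(T)$ coincides with its closure within $X\times Y$; applying the homeomorphism $\iota^{-1}$ then yields
$$
  \overline{\dom R}=\iota^{-1}\bigl(\overline{\Gscr(R)}\bigr)=\iota^{-1}\bigl(\Gscr(\overline R)\bigr)=\dom{\overline R},
$$
where the left-hand closure is taken in the graph norm of $T$, and at the same time $\overline R=T\big|_{\overline{\dom R}}$. The ``in particular'' statement is then immediate: $R$ is closed iff $\Gscr(R)=\overline{\Gscr(R)}$, i.e.\ iff $\Gscr(R)$ is closed in $X\times Y$, which via the homeomorphism $\iota$ is equivalent to $\dom R$ being closed in the graph norm of $T$.

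I do not expect a genuine obstacle here; the only step deserving a moment's care is the interchange ``closure in $X\times Y$'' $=$ ``closure in $\Gscr(T)$'', which is exactly where the closedness of $T$ is used, together with the (automatic) closability of a restriction of a closed operator.
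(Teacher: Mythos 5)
Your proof is correct and takes essentially the same route as the paper's: both rest on the observation that the graph norm makes $\dom T$ isometrically isomorphic to the closed subspace $\Gscr(T)\subset X\times Y$, so that closures of $\dom R$ in the graph norm correspond exactly to closures of $\Gscr(R)$ in $X\times Y$. The only presentational difference is that you package this as a homeomorphism argument (and make the closability of $R$ explicit), whereas the paper runs the same identification element-wise through a chain of sequential equivalences.
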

\begin{proof}
The following chain of equivalences, where $G(R)=\sbm{I\\R}\dom R$ denotes the graph of $R$, proves that $\overline R=T\big|_{\overline{\dom R}}$:
$$
\begin{aligned}
  \bbm{x\\y}\in G(\overline R) \quad&\overeq{(i)}\quad \exists x_k\in \dom R:~x_k\overto X x,~Rx_k\overto Y y\\
    &\overeq{(ii)} \quad \exists x_k\in \dom R:~x_k\overto X x,~Tx_k\overto Y y\\
    &\overeq{(iii)} \quad \exists x_k\in \dom R:~x_k\overto{\dom T} x,~ Tx=y\\
    &\Longleftrightarrow \quad x\in\overline{\dom R},~Tx=y,\\
\end{aligned}
$$
where we have used that (i): $G(\overline R)=\overline{G(R)}$ by the definition of operator closure, (ii): $G(R)\subset G(T)$, and (iii): $T$ is continuous from $\dom T$ into $Y$ and $\dom T$ is complete.

Now it follows easily that $R$ is closed if and only if $\dom R$ is closed in $\dom T$:
$$
  R=\overline R \quad\Longrightarrow\quad T\big|_{\dom R}= T\big|_{\dom{\overline R}} \quad\Longrightarrow\quad \dom R=\overline{\dom R},
$$
and moreover, assuming instead that $\dom R=\overline{\dom R}$, we obtain that
$$
  R = T\big|_{\dom R} = T\big|_{\overline{\dom R}} = \overline R.
$$
\end{proof}

\begin{lemma}\label{lem:invimages}
Let $\gamma$ be a linear operator from the Hilbert space $\Tscr$ into the Hilbert space $\Zscr$.
\begin{enumerate}
\item  Let $\Rscr$ and $\Rscr'$ be two linear subspaces of $\Tscr$ such that $\Ker\gamma\subset\Rscr\cap\Rscr'$ then 
\begin{equation}\label{eq:domsameass}
  \gamma \, \Rscr=\gamma \, \Rscr' \quad\text{if and only if}\quad \Rscr=\Rscr'.
\end{equation}
\item Let $\Rscr$ be a linear subspace of $\Tscr$ and assume that $\gamma:\Tscr\to \Zscr$ is continuous and surjective with $\Ker\gamma\subset\Rscr$. Then $\gamma\overline{\Rscr}=\overline{\gamma\Rscr}$. Furthermore, $\Rscr$ is closed in $\Tscr$ if and only if $\gamma \Rscr$ is closed in $\Zscr$.
\end{enumerate}
\end{lemma}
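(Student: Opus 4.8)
The plan is to prove part (1) by a bare-hands argument using only the algebraic hypothesis $\Ker\gamma\subset\Rscr\cap\Rscr'$, and then to deduce part (2) from part (1) together with the open mapping theorem.

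For part (1), the direction $\Rscr=\Rscr'\Rightarrow\gamma\Rscr=\gamma\Rscr'$ is trivial, so assume $\gamma\Rscr=\gamma\Rscr'$ and take $r\in\Rscr$. Then $\gamma r\in\gamma\Rscr=\gamma\Rscr'$, so $\gamma r=\gamma r'$ for some $r'\in\Rscr'$; hence $r-r'\in\Ker\gamma\subset\Rscr'$ and $r=r'+(r-r')\in\Rscr'$. This shows $\Rscr\subset\Rscr'$, and the opposite inclusion is obtained by exchanging the roles of $\Rscr$ and $\Rscr'$. No continuity of $\gamma$ is needed here.

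For part (2), I would first establish the identity $\gamma\overline\Rscr=\overline{\gamma\Rscr}$. The inclusion $\gamma\overline\Rscr\subset\overline{\gamma\Rscr}$ is immediate from continuity: a sequence $r_k\in\Rscr$ with $r_k\to r$ has $\gamma r_k\to\gamma r$. For the reverse inclusion $\overline{\gamma\Rscr}\subset\gamma\overline\Rscr$ I would use the open mapping theorem, which (since $\gamma$ is bounded and surjective between Hilbert spaces) provides a constant $C>0$ such that every $z\in\Zscr$ has a preimage $x$ with $\gamma x=z$ and $\norm{x}\le C\norm{z}$. Given $z\in\overline{\gamma\Rscr}$, pick $r_k\in\Rscr$ with $\gamma r_k\to z$ and, after passing to a subsequence, arrange $\norm{\gamma r_{k+1}-\gamma r_k}\le C^{-1}2^{-k}$. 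Choose $t_k$ with $\gamma t_k=\gamma r_{k+1}-\gamma r_k$ and $\norm{t_k}\le 2^{-k}$; since $r_{k+1}-r_k-t_k\in\Ker\gamma\subset\Rscr$ while $r_{k+1}-r_k\in\Rscr$, also $t_k\in\Rscr$. Setting $s_1:=r_1$ and $s_{k+1}:=s_k+t_k$, the sequence $(s_k)$ lies in $\Rscr$, satisfies $\gamma s_k=\gamma r_k$ by telescoping, and is Cauchy because $\norm{s_{k+1}-s_k}\le 2^{-k}$; its limit $s$ therefore lies in $\overline\Rscr$ and satisfies $\gamma s=\lim\gamma r_k=z$ by continuity, so $z\in\gamma\overline\Rscr$. (An alternative is to factor $\gamma=\widehat\gamma\circ q$ through the quotient $q:\Tscr\to\Tscr/\Ker\gamma$, where $\widehat\gamma$ is a homeomorphism by the open mapping theorem, and to note that the quotient map sends the closure of a $\Ker q$-saturated set onto the closure of its image; but the Cauchy-lifting argument seems shortest to write in full.)

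Granting $\gamma\overline\Rscr=\overline{\gamma\Rscr}$, the stated equivalence follows: if $\Rscr$ is closed then $\gamma\Rscr=\gamma\overline\Rscr=\overline{\gamma\Rscr}$ is closed; conversely, if $\gamma\Rscr$ is closed then $\gamma\overline\Rscr=\overline{\gamma\Rscr}=\gamma\Rscr$, and since $\Ker\gamma\subset\Rscr\subset\overline\Rscr$, part (1) applied with $\Rscr'=\overline\Rscr$ yields $\Rscr=\overline\Rscr$, i.e.\ $\Rscr$ is closed. The main obstacle in this plan is precisely the non-formal inclusion $\overline{\gamma\Rscr}\subset\gamma\overline\Rscr$: a limit in $\Zscr$ of images $\gamma r_k$ need not a priori be the image of a limit, because the $r_k$ may fail to converge, and it is the quantitative surjectivity supplied by the open mapping theorem that lets one correct the $r_k$ by kernel vectors into a genuinely convergent sequence without altering their images.
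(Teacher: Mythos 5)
Your proof is correct. Part (1) and the deduction of the closedness equivalence from the identity $\gamma\overline{\Rscr}=\overline{\gamma\Rscr}$ (via part (1) applied to $\Rscr'=\overline{\Rscr}$) coincide with the paper's argument. The genuine difference lies in the key inclusion $\overline{\gamma\Rscr}\subset\gamma\overline{\Rscr}$. The paper exploits the Hilbert-space setting to take a \emph{continuous linear right inverse} $\gamma^{-r}$ of $\gamma$ (e.g.\ the inverse of $\gamma$ restricted to $(\Ker\gamma)^\perp$, bounded by the open mapping theorem): given $\gamma f_k\to g$ with $f_k\in\Rscr$, the vectors $\gamma^{-r}\gamma f_k$ already converge to $\gamma^{-r}g$, and the same kernel-correction observation you use for your $t_k$ shows $\gamma^{-r}\gamma f_k\in\Rscr$, so the limit lies in $\overline{\Rscr}$ and maps to $g$. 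You instead use only the \emph{quantitative surjectivity} furnished by the open mapping theorem and repair the non-convergent sequence $(r_k)$ by kernel vectors through a telescoping Cauchy construction. Both are sound; the paper's route is shorter once one accepts the existence of a bounded right inverse (automatic for a bounded surjection between Hilbert spaces), whereas your Cauchy-lifting argument is marginally longer but does not require a \emph{linear} continuous right inverse and therefore transfers verbatim to the Banach-space setting. Your parenthetical alternative via the quotient $\Tscr/\Ker\gamma$ is also valid and is perhaps the cleanest conceptual formulation of why the hypothesis $\Ker\gamma\subset\Rscr$ (saturation of $\Rscr$ under the kernel) is exactly what makes the closure commute with the image.
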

\begin{proof}
{\em 1}.\/ First assume that $\gamma \, \Rscr=\gamma \, \Rscr'$ and choose $x\in \Rscr'$ arbitrarily. Then we can find a $\xi\in\Rscr$ such that $\gamma x=\gamma\xi$, and then $x-\xi\in\Ker\gamma\subset \Rscr$ by the assumption $\Ker\gamma\subset\Rscr\cap\Rscr'$, so that $x=x-\xi+\xi\in\Rscr$. This proves that $\Rscr'\subset\Rscr$, and since there is no distinction between $\Rscr$ and $\Rscr'$ in the result statement, it also holds that $\Rscr\subset\Rscr'$. The implication from right to left in \eqref{eq:domsameass} is trivial.

\smallskip\noindent
{\em 2}.\/
Fix $f\in\overline{\Rscr}$ arbitrarily and let $f_k\in \Rscr$ tend to $f$ in $\Tscr$. Then $\gamma f_k\to \gamma f$ in $\Zscr$ by the continuity of $\gamma$, and hence $\gamma f\in \overline{\gamma\,\Rscr}$, i.e., $\gamma \,\overline {\Rscr}\subset \overline{\gamma\,\Rscr}$.

For the converse inclusion, we first remark that since
  $\gamma$ is onto ${\mathcal Z}$, there exists a continuous right
  inverse of $\gamma$. We denote this right-inverse by
  $\gamma^{-r}$. Now let $g\in \overline{\gamma\,\Rscr}$ be arbitrary
  and let $f_k\in \Rscr$ be a sequence such that $\gamma f_k\to g$ in
  $\Zscr$. Then $\gamma^{-r}\gamma f_k\to \gamma^{-r}g=:f'$ in
  $\Tscr$ and thus $\gamma f'=g$. By the definition of a right inverse
  there holds $\gamma(f_k-\gamma^{-r}\gamma
  f_k) =0$, and since $\ker(\gamma) \subset {\mathcal R}$ and $f_k \in
  {\mathcal R}$, we conclude that $\gamma^{-r}\gamma f_k\in\Rscr$ for
  all $k$. So $f'\in\overline\Rscr$ and thus $g =\gamma f' \in
  \gamma\,\overline{\Rscr}$. Combining this with the other inclusion,
  we have established that
  $\gamma\,\overline{\Rscr}=\overline{\gamma\,\Rscr}$.
 We concentrate next on the last assertion. 

 If ${\mathcal R}$ is closed, then by the previous result $\gamma
 {\mathcal R} = \gamma \overline{{\mathcal R}} = \overline{\gamma
   {\mathcal R}}$. Thus $\gamma {\mathcal R}$ is closed.

  If $\gamma {\mathcal R}$ is closed, then $\gamma {\mathcal R} =
  \overline{\gamma {\mathcal R}} = \gamma \overline{{\mathcal R}}$,
  where we used the fist result again. Defining ${\mathcal R}'$ as
  $\overline{{\mathcal R}}$, we see that the left hand-side of
  (\ref{eq:domsameass}) holds. Since $\ker(\gamma) \subset {\mathcal
    R} = {\mathcal R} \cap {\mathcal R}'$, we conclude from part (1)
  that $\Rscr=\overline{\Rscr}$.
\end{proof}

\end{document}